\tikzset{> /.tip = {Stealth[round,length=6pt]}}
\tikzstyle over=[preaction={draw,line width=5pt,white}]
\tikzstyle{hvector}=[inner sep=2pt,draw=blue!50,fill=blue!10,thick]
\tikzstyle{unit}=[inner sep=2pt,shape=circle, draw]
\tikzstyle{counit}=[inner sep=2pt,shape=circle, draw,fill=gray]
\tikzstyle{antipode}=[inner sep=2pt,shape=rectangle, draw]
\tikzstyle{cocycle}=[inner sep=2pt,shape=circle, draw]
\tikzstyle{twistedm}=[inner sep=2pt,shape=circle, fill=gray]
\tikzstyle{autom}=[inner sep=2pt,shape=circle, draw]
\tikzstyle{coact}=[inner sep=2pt,shape=circle, fill=black]
\newtheorem{theorem}{Theorem}[section]
\theoremstyle{definition}
\newtheorem{proposition}[theorem]{Proposition}
\newtheorem{lemma}[theorem]{Lemma}
\newtheorem{remark}[theorem]{Remark}
\newtheorem{corollary}[theorem]{Corollary}
\newcommand{\autour}[1]{\tikz[baseline=(X.base)]\node
  [draw=black,fill=cyan!20,semithick,rectangle,inner sep=2pt,
  rounded corners=3pt] (X) {#1};}
\def\BZ{\mathbbm Z}
\def\BQ{\mathbbm Q}
\def\la{\langle}
\def\ra{\rangle}
\def\tq{\tilde{q}}
\def\a{\alpha}
\def\b{\beta}
\def\g{\gamma}
\def\ve{\varepsilon}
\def\be{\begin{equation}}
\def\ee{\end{equation}}
\def\RT{F}
\def\tr{\mathrm{tr}}
\def\diag{\mathrm{diag}}
\newcommand{\slthree}{\mathfrak{sl}_3}
\def\End{\mathrm{End}}
\def\hs{\overline{s}}
\def\LG{\mathrm{LG}}
\def\ADO{\mathrm{ADO}}
\begin{document}

\title[Skein theory for the Links--Gould polynomial]{
  Skein theory for the Links--Gould polynomial}
\author[S. Garoufalidis]{Stavros Garoufalidis}
\address{% Max Planck Institute for Mathematics \\
         % Vivatsgasse 7, 53111 Bonn, GERMANY \newline
  International Center for Mathematics, Department of Mathematics \\
  Southern University of Science and Technology \\
  Shenzhen, China \newline
  {\tt \url{http://people.mpim-bonn.mpg.de/stavros}}}
\email{stavros@mpim-bonn.mpg.de}

\author[M. Harper]{Matthew Harper}
\address{Michigan State University, East Lansing, Michigan, USA}
\email{mrhmath@proton.me}

\author[R. Kashaev]{Rinat Kashaev}
\address{Section de Math\'ematiques, Universit\'e de Gen\`eve \\
rue du Conseil-G\'en\'eral 7-9, 1205 Gen\`eve, Switzerland \newline
         {\tt \url{http://www.unige.ch/math/folks/kashaev}}}
\email{Rinat.Kashaev@unige.ch}
       
\author[B.-M. Kohli]{Ben-Michael Kohli}
\address{Section de Math\'ematiques, Universit\'e de Gen\`eve \\
rue du Conseil-G\'en\'eral 7-9, 1205 Gen\`eve, Switzerland}
\email{bm.kohli@protonmail.ch}

\author[J. Song]{Jiebo Song}
\address{Beijing Institute of Mathematical Sciences and Applications,
  Huairou District, Beijing, China}
\email{songjiebo@bimsa.cn}

\author[G. Tahar]{Guillaume Tahar}
\address{Beijing Institute of Mathematical Sciences and Applications,
  Huairou District, Beijing, China}
\email{guillaume.tahar@bimsa.cn}

\thanks{
  {\em Key words and phrases:}
  knots, links, Links--Gould invariant, ADO invariant, braid groups,
  cubic skein theory, $R$-matrices, quantum groups, Nichols algebras,
  Yetter--Drinfel'd modules, Seifert genus.\newline
  {\em MSC: primary 57K14, 57K16; secondary 17B37, 18M15, 20F36}
}

\date{16 February 2026}

\begin{abstract}
Building further on work of Marin and Wagner, we give a cubic braid-type skein theory of the Links--Gould polynomial invariant of oriented links and prove that it can be used to evaluate any oriented link, adding this polynomial to the list of polynomial invariants that can be computed by skein theory. As a consequence, we prove that this skein theory is also shared by the $V_1$-polynomial defined by two of the authors, deducing the equality of the two link polynomials. This implies specialization properties of the $V_1$-polynomial to the Alexander polynomial and to the $\ADO_3$-invariant, the fact that it is a Vassiliev power series invariant, as well as a Seifert genus bound for knots.
\end{abstract}

\maketitle

{\footnotesize
\tableofcontents
}

%%%%%%%%%%%%%%%%%%%%%%%%%%%%%%%%%%%%%%%%%%%%%%%%%%%%%%%%%%%%%%%%%%%%%%%%%%%%

\section{Introduction}
\label{sec.intro}

\subsection{Link polynomials via skein theory}

The origin of skein theory as a way to uniquely define and compute invariants of
links goes back to at least Alexander in 1928 and Conway in
1969~\cite{Alexander,Conway} who used it to compute the Alexander polynomial $\Delta_L(t) \in \BZ[t^{\pm 1/2}]$ and the Conway polynomial $\nabla_L(z) \in \BZ[z]$
of an oriented link $L$ in $S^3$. These polynomials satisfy the skein relations
\be
\Delta_{L_+}(t) - \Delta_{L_-}(t) = (t^{1/2}-t^{-1/2})\Delta_{L_0}(t),
\qquad
\nabla_{L_+}(z) - \nabla_{L_-}(z) = z \nabla_{L_0}(z)
\ee
where, \( L_+, L_-, L_0 \) are three link diagrams that are identical except at a
single crossing, where they differ by being a positive crossing, a negative crossing,
or a smoothing, respectively. Such a relation, together with the fact that any
diagram of a link can be changed to a diagram of an unlink, and the initial condition
$\Delta_{\text{unknot}}(t)=1$ or $\nabla_{\text{unknot}}(z)=1$, computes
the polynomial on any link diagram. The two invariants are related by a change of
variables. 

But a dramatic revival of skein theory came with Jones's discovery of the Jones
polynomial $J_L(q) \in \BZ[q^{\pm 1/2}]$ which satisfies the skein-theory
\be
q^{-1} J_{L_+}(q) - q J_{L_-}(q) = (q^{1/2} - q^{-1/2}) J_{L_0}(q), \qquad
J_{\text{unknot}}(q)=1 \,.
\ee
Soon after Jones's discovery, it was realized by a group of six people that there
is a 2-variable generalization of the Jones and the Alexander/Conway polynomial,
the so-called HOMFLYPT-polynomial that satisfies the skein theory~\cite{HOMFLY, PT}
\be
l P_{L_+}(m,l) - l^{-1} P_{L_-}(m,l) = m P_{L_0}(m,l), \qquad
P_{\text{unknot}}(m,l) = 1 \,.
\ee
At this point, skein-theory and skein modules (on surfaces and 3-manifolds) were
introduced and studied independently by Przytycki~\cite{PT,PT:conway} and
Turaev~\cite{Turaev:skein}. At the same time the Kauffman polynomial was introduced, and it satisfies a quadratic skein relation 
\be
Q_{L_+}(z)+Q_{L_-}(z)=zQ_{L_0}(z)+zQ_{L_\infty}(z)
\ee
for framed unoriented links, where $L_\infty$ is the horizontal smoothing,
\cite{Kauff2}.

An excellent introduction of the history of skein theory and its role in knot theory
is given by Przytycki~\cite{Prz:skein}, where one may find examples of linear and
quadratic skein theory.

On the other hand, cubic skein theory is a more difficult subject, and in fact it was
only recently shown that the move that replaces three half-twists by
none is not sufficient to untie all knots~\cite{BCGIKMMPW}.

Our paper gives an example of a cubic skein theory which uniquely determines the
two-variable Links-Gould invariant~\cite{LG} and identifies it with the recent
$V_1$ two-variable polynomial of~\cite{GK:multi}. This identification will be used
in a subsequent work to provide Seifert genus bounds for the colored Links-Gould
polynomials and of the $V_n$-polynomials of~\cite{GK:multi} which are independent of $n$,
extending the work of~\cite{KT} and confirming some patterns found
in~\cite{GL:patterns}.

\subsection{Multivariable knot polynomials}

Recently, a systematic way to define and effectively compute multivariable knot
polynomials was introduced in~\cite{GK:multi}, using as input a finite dimensional
Nichols algebra (or a finite-dimensional Drinfeld--Yetter module of it)
with an automorphism. From such an algebra, one can define a rigid $R$-matrix and
construct a state-sum invariant of long knots by applying the well-known
Reshetikhin--Turaev functor.

Nichols algebras are easy to describe, and 
in the simplest case of rank 1, such an algebra is uniquely determined by the
data
\be
\text{basis}(V)=\{x\}, \qquad \Delta(x)=x \otimes 1 + 1 \otimes x,
\qquad \tau(x\otimes x)=q x\otimes x,
\qquad \phi(x)=t x \,.
\ee
In~\cite{GK:multi}, it was shown that the corresponding invariants are the ADO
invariants of a knot~\cite{ADO}, and the colored Jones polynomials of a knot
~\cite{RT,Tu:YB}.

The next case of a Nichols algebra of rank 2 leads to a family of 2-variable
polynomials $\Lambda_\omega(t_0,t_1)$ at each root of unity $\omega$ and a
sequence $V_n(t,q)$ of 2-variable polynomials, where $n \geq 1$ is an integer. 

The above polynomials are defined for oriented long knots, but the
construction can often be extended to polynomial invariants of framed, oriented
links in 3-space. Whereas a general theorem is not known for all finite dimensional
Nichols algebras with automorphisms, it was shown in~\cite{GHKST} that the
long knot $V_1$, $\Lambda_1$, and $\Lambda_{-1}$ polynomials do extend
to framed, oriented links (and that they are independent of the framing), and two
of them were identified with the Alexander polynomial and the $\slthree$-link polynomial
of~\cite{Harper}
\be
\Lambda_{1,L}(t_0, t_1) = \Delta_L (t_0) \, \Delta_L(t_1), \qquad
\Lambda_{-1,L}(t^{-2}, s^{-2}) = \Delta_{\slthree,L}(t,s) 
\ee
as was conjectured in~\cite{GK:multi}. In the following we will identify the $V_1$ polynomial
of~\cite{GK:multi} with the Links--Gould polynomial~\cite{LG}, as was conjectured in
~\cite{GK:multi}.

\subsection{\texorpdfstring{$\LG = V_1$}{LG=V1} via skein theory}

Throughout the paper, all links will be oriented and considered
up to ambient isotopy in 3-space. 

\begin{theorem}
\label{thm.1}
For all links $L$ we have:
\be
V_{1,L}(t_0,t_1) = \LG_L(t_0,t_1) \in \BZ[t_0^{\pm 1},t_1^{\pm 1}] \,.
\ee
\end{theorem}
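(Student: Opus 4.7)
The plan is to exploit the cubic braid-type skein theory developed earlier in the paper and to check that both link polynomials obey it, so that uniqueness forces their equality.

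The first step is to record the cubic skein relation and auxiliary relations (Markov-type trace condition, framing normalization, value on the unknot) that $\LG_L(t_0,t_1)$ satisfies. Building on Marin--Wagner, this is the content of the main skein-theoretic theorem of the paper: the Links--Gould $R$-matrix satisfies a monic cubic polynomial equation, and one reads off a cubic skein relation relating $\LG_{L_{++}}$, $\LG_{L_+}$, $\LG_{L_-}$ (for a braid strand with three, one, and negative-one half-twists) together with the smoothing terms dictated by the projectors onto the eigenspaces of the $R$-matrix. One then verifies directly on the $R$-matrix defining $V_1$ in \cite{GK:multi} that it satisfies exactly the same cubic minimal polynomial, hence the same skein relation, and the same value on the unknot after appropriate normalization.

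The second, and main, step is to show that this cubic braid-type skein theory is \emph{sufficient} to evaluate any oriented link, i.e.\ that the skein relations plus the initial data reduce any link diagram to a scalar. This is where the difficulty highlighted in the introduction appears: unlike the linear HOMFLYPT/Alexander/Jones case, a naive cubic relation does not in general untie all knots, as is made explicit in \cite{BCGIKMMPW}. I would carry this out at the level of the braid group algebra: using Markov's theorem, it suffices to show that the quotient of the group algebra $\BZ[B_n]$ (with appropriate coefficients) by the ideal generated by the cubic relation on each generator, together with the additional relations coming from the $R$-matrix (e.g.\ idempotent/projector relations and the value of the Markov trace on stabilization), is spanned over the base ring by a finite set of braids whose closures are unlinks (or at least links whose invariant we already know). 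Equivalently, one produces an explicit reduction algorithm on braid words that strictly decreases a suitable complexity using only the allowed skein moves. This combinatorial/algebraic reduction is the heart of the argument and is expected to be the principal obstacle, since cubic skein algebras are genuinely larger and more subtle than Hecke or BMW quotients.

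Once sufficiency is established, the third step is a short bookkeeping argument. Since $V_{1,L}$ is, by \cite{GHKST}, a well-defined ambient isotopy invariant of oriented links (independent of framing), and since it satisfies the same cubic skein relation, the same stabilization/trace behavior, and the same unknot value as $\LG_L$, the uniqueness part of the skein theory forces $V_{1,L}(t_0,t_1) = \LG_L(t_0,t_1)$ for every oriented link $L$. The only variable matching to double-check is that the two-variable parametrizations $(t_0,t_1)$ of the two invariants are identified by the identity substitution; this is already fixed by comparing the $R$-matrix eigenvalues in the cubic minimal polynomial, so no further change of variables is needed.
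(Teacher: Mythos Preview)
Your strategy is the paper's: check that both $R$-matrices satisfy a common set of skein relations (Lemma~\ref{lem.R123}), prove via a reduction algorithm on the braid-group algebra that these relations suffice to evaluate every link (Theorem~\ref{thm.2} and Corollary~\ref{cor.1}), and conclude from the shared unknot value.

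Two places where your sketch is imprecise enough to matter. First, the cubic relation \eqref{R1} together with ``projector/idempotent'' or ``Markov trace'' conditions is \emph{not} what does the job; the paper needs two further genuine braid identities, \eqref{R2} in $B_3$ (Ishii's relation) and a specific $80$-term relation \eqref{R3} in $B_4$, both of which are polynomial identities in the $R$-matrix and are what make the reduction algorithm terminate. Your phrase ``additional relations coming from the $R$-matrix'' is pointing in the right direction but hides the real content. Second, the uniqueness statement (Corollary~\ref{cor.1}) requires as an extra hypothesis that the invariant vanishes on split links; this is not a consequence of the braid skein relations but comes separately from the tracelessness of the enhancement matrices $h_{\LG}$ and $h_V$ (Lemma~\ref{lem.split}). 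Without that input the reduction leaves you with closures of braids in $B_{n-1}$ sitting inside $B_n$, i.e.\ split links, and you cannot finish.
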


Whereas both polynomial invariants $V_1$ and $\LG$ are defined by 4-dimensional
$R$-matrices, we were unable to show that these are conjugate or weakly-conjugate
(borrowing terminology from~\cite{GHKST}), and hence we could not use the methods
of~\cite{GHKST} to deduce the above theorem.

Instead, we prove the above theorem by
showing that both invariants satisfy a common skein theory that uniquely determines
them, hence equality follows. This common skein theory that we shortly discuss does
not describe a presentation of the braided monoidal category of representations of
$U_q(\mathfrak{sl}(2|1))$, but instead is tailored to relations on the braid-group
representations of these invariants, and ultimately to polynomial equations satisfied
by the $R$-matrices of both the $V_1$ and the the $\LG$ invariants.

To describe these rather complicated skein relations, we choose to present braids
algebraically rather than pictorially, as words in the standard generators $s_i$
and their inverses $\hs_i$ for $i=1,\dots,n-1$ of the Artin braid group $B_n$
~\cite{Artin} shown in Figure~\ref{fig:FIG1}. 

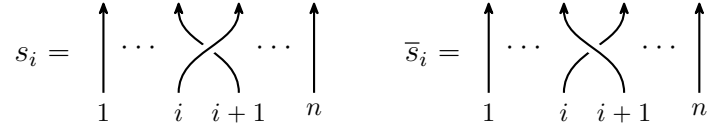
\begin{figure}[htpb!]
%\includegraphics[scale=0.7]{Image5.png}
%xpos
$s_i=$\,
\begin{tikzpicture}[baseline=12, xscale=.4, yscale=.4, ]
%\coordinate (a) at (-4,0);
\coordinate (b) at (-2.5,0);
\coordinate (c) at (4.5,0);
%\coordinate (d) at (6,0);
%
%\draw[thick,->] (a) -- ($(a)+(0,3)$);
\draw[thick,->] (b) -- ($(b)+(0,3)$);
\draw[thick,->] (c) -- ($(c)+(0,3)$);
%\draw[thick,->] (d) -- ($(d)+(0,3)$);
%
% \node[below] at (a) {\footnotesize{$1$}};
 \node[below] at (b) {\footnotesize{$1$}};
 \node[below] at (c) {\footnotesize{$n$}};
 %\node[below] at (d) {\footnotesize{$\phantom{1}n\phantom{1}$}};
%
\node at (-1.25,1.5) {$\cdots$};
\node at (3.25,1.5) {$\cdots$};
\node[below] at (0,0) {\footnotesize{$i$}};
\node[below] at (2,0) {\footnotesize{$i+1$}};
\draw[thick,->] (2,0) to [out=90, in=-90] (0,3);
\draw[over,thick,->] (0,0) to [out=90, in=-90] (2,3);
\end{tikzpicture} 
\qquad
%xneg
$\hs_i=$~
\begin{tikzpicture}[baseline=12, xscale=.4, yscale=.4]
%\coordinate (a) at (-4,0);
\coordinate (b) at (-2.5,0);
\coordinate (c) at (4.5,0);
%\coordinate (d) at (6,0);
%
%\draw[thick,->] (a) -- ($(a)+(0,3)$);
\draw[thick,->] (b) -- ($(b)+(0,3)$);
\draw[thick,->] (c) -- ($(c)+(0,3)$);
%\draw[thick,->] (d) -- ($(d)+(0,3)$);
%
% \node[below] at (a) {\footnotesize{$1$}};
 \node[below] at (b) {\footnotesize{$1$}};
 \node[below] at (c) {\footnotesize{$n$}};
% \node[below] at (d) {\footnotesize{$\phantom{1}n\phantom{1}$}};
%
\node at (-1.25,1.5) {$\cdots$};
\node at (3.25,1.5) {$\cdots$};
\node[below] at (0,0) {\footnotesize{$i$}};
\node[below] at (2,0) {\footnotesize{$i+1$}};
\draw[thick,->] (0,0) to [out=90, in=-90] (2,3);
\draw[over, thick,->] (2,0) to [out=90, in=-90] (0,3);
\end{tikzpicture} 
\caption{The standard generators $s_i$ of the braid
  group $B_n$ and their inverses $\hs_i$ for $i=1,\dots,n-1$.} 
\label{fig:FIG1}
\end{figure}

There is a natural inclusion of $B_n \to B_{n+1}$ obtained by adding a vertical
strand on the right, and as is customary in the literature (see e.g.,~\cite{Farb}
and references therein), we denote by $s_i$ the corresponding elements in $B_n$
and in $B_{n+1}$. 

\begin{lemma}
\label{lem.R123}  
Both $\LG$ and $V_1$ satisfy the skein relations in $B_n$ and
$i,j,k =1, \dots, n-1$: 
\be 
\label{R1}\tag{$R_1$}
s_i^2 + (1-t_0 - t_1) \, s_i + (t_0 t_1 - t_0 - t_1) \,1
+ (t_0 t_1) \, \hs_i = 0 \,,
\ee
\be
\label{R2}\tag{$R_2$}
\hs_i s_j s_i - s_i s_j \hs_i - \hs_i \hs_j s_i + s_i \hs_j \hs_i =
s_i s_j - s_i \hs_j - \hs_i s_j + \hs_i \hs_j - s_j s_i + s_j \hs_i
+ \hs_j s_i - \hs_j \hs_i 
\ee
 for $|i-j|=1$,
\be
\label{R3}
\tag{$R_3$}
s_i \hs_k s_j \hs_k - \hs_k s_j \hs_k s_i = \sum_{l=1}^{78} a_l w_l 
\ee
for $k-2=j-1=i$, where $w$ is 
\begin{small}
\be
\label{w78}
\begin{aligned}
  w = & (s_i s_j, s_i \hs_j, s_j s_i, \hs_j s_i, s_k s_j, \hs_k s_j, s_k \hs_j,
  \hs_k \hs_j, s_j s_k, \hs_j s_k, s_j \hs_k, \hs_j \hs_k, s_i s_k s_j,
  s_i s_k \hs_j, s_i \hs_k s_j, \\ & s_i \hs_k \hs_j, \hs_i s_k s_j, \hs_i s_k \hs_j,
  \hs_i \hs_k s_j, \hs_i \hs_k \hs_j , s_i s_j s_k, s_i s_j \hs_k,
  s_i \hs_j s_k , s_i \hs_j \hs_k, \hs_i s_j s_k, \hs_i s_j \hs_k,
  \hs_i \hs_j s_k, \\ & \hs_i \hs_j \hs_k, s_j s_i s_k, s_j s_i \hs_k, s_j \hs_i s_k,
  s_j \hs_i \hs_k, \hs_j s_i s_k, \hs_j s_i \hs_k, \hs_j \hs_i s_k,
  \hs_j \hs_i \hs_k, s_k s_j s_i, s_k s_j \hs_i, s_k \hs_j s_i, \\ &
  s_k \hs_j \hs_i, \hs_k s_j s_i, \hs_k s_j \hs_i, \hs_k \hs_j s_i,
  \hs_k \hs_j \hs_i, s_k \hs_i \hs_j \hs_i, \hs_k \hs_i \hs_j \hs_i,
  s_k \hs_i \hs_j s_i, \hs_k \hs_i \hs_j s_i, s_k \hs_i s_j \hs_i,
  \hs_k \hs_i s_j \hs_i, \\ & s_k s_i \hs_j \hs_i, \hs_k s_i \hs_j \hs_i, 
  s_k s_i \hs_j s_i, \hs_k s_i \hs_j s_i, s_k s_i s_j \hs_i,
  \hs_k s_i s_j \hs_i, \hs_i \hs_j \hs_i s_k, \hs_i \hs_j \hs_i \hs_k, 
  \hs_i \hs_j s_i s_k,  \hs_i \hs_j s_i \hs_k, \\ & \hs_i s_j \hs_i s_k, 
  \hs_i s_j \hs_i \hs_k, s_i \hs_j \hs_i s_k, s_i \hs_j \hs_i \hs_k,
  s_i \hs_j s_i s_k, s_i \hs_j s_i \hs_k, s_i s_j \hs_i s_k,
  s_i s_j \hs_i \hs_k, s_i \hs_j \hs_k s_j, s_i \hs_j s_k \hs_j, \\ &
  s_i s_j \hs_k \hs_j, s_i s_j \hs_k s_j, s_i s_j s_k \hs_j,
  \hs_j \hs_k s_j s_i, \hs_j s_k \hs_j s_i, s_j \hs_k \hs_j s_i,
  s_j \hs_k s_j s_i, s_j s_k \hs_j s_i) 
\end{aligned}
\ee
\end{small}
and the coefficients
$a_l \in \BQ(t_0,t_1)$ for $l=1,\ldots,78$ are given explicitly
in Appendix~\ref{sec.R123}.
\end{lemma}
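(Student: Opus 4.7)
The plan is to reduce each of the three relations $R_1, R_2, R_3$ to an explicit matrix identity at the level of the braid group representations induced by the $R$-matrices of $\LG$ and of $V_1$, and then to verify these identities by direct linear-algebra computations over $\BQ(t_0, t_1)$. Both invariants arise from the Reshetikhin--Turaev functor applied to a $4$-dimensional representation $V$ together with an $R$-matrix $R \in \End(V \otimes V)$, so that $s_i$ acts on $V^{\otimes n}$ as $\mathrm{id}^{\otimes (i-1)} \otimes R \otimes \mathrm{id}^{\otimes (n-i-1)}$ and $\hs_i$ acts analogously with $R^{-1}$. Since each relation involves at most three consecutive generators, and the action of each generator is trivial on the strands it does not touch, it suffices to verify $R_1$ as an identity in $\End(V^{\otimes 2})$, $R_2$ in $\End(V^{\otimes 3})$, and $R_3$ in $\End(V^{\otimes 4})$, separately for each of the two $R$-matrices.

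First I would check $R_1$: after multiplying by $s_i$ this becomes the cubic polynomial identity $R^3 + (1 - t_0 - t_1)\, R^2 + (t_0 t_1 - t_0 - t_1)\, R + t_0 t_1 \,I = 0$ in the $16 \times 16$ matrix ring $\End(V \otimes V)$. For both invariants, this is a Cayley--Hamilton-type computation reflecting a decomposition of $V \otimes V$ into three isotypic components with eigenvalues $1, -t_0, -t_1$ (up to sign conventions), and it is routine to verify on the explicit $R$-matrices. Next I would check $R_2$ by expanding both sides as $64 \times 64$ matrices on $V^{\otimes 3}$ and verifying equality entrywise over $\BQ(t_0, t_1)$; this is bookkeeping on explicit matrices.

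The third and principal step is to verify $R_3$ on $V^{\otimes 4}$. Here I would first show that the $78$ braid words $w_l$ listed in \eqref{w78}, acting as operators on $V^{\otimes 4}$, span a subspace of the $256 \times 256$ endomorphism algebra that contains the operator $L := s_i \hs_k s_j \hs_k - \hs_k s_j \hs_k s_i$. The coefficients $a_l$ would then be extracted by solving the resulting linear system over $\BQ(t_0, t_1)$, separately for each of the two $R$-matrices, and the verification concludes by observing that both systems produce the same coefficients, namely those recorded in Appendix~\ref{sec.R123}. The main obstacle is the scale of this final step: the matrices are $256 \times 256$ with entries in the rational function field $\BQ(t_0, t_1)$, and a linear system in $78$ unknowns with rational-function coefficients must be solved for each $R$-matrix; in practice this is a computer-algebra verification rather than a hand calculation. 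A secondary difficulty is to motivate a priori the choice of the particular $78$ words as a sufficient spanning set; this presumably comes from first reducing $L$ modulo the consequences of $R_1$ and $R_2$ to a normal form supported on braid words of the given shape, in the spirit of the cubic Hecke algebra analysis of Marin--Wagner.
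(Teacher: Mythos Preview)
Your proposal is correct and essentially matches the paper's approach: the paper states that the lemma follows from the fact that the explicit $R$-matrices for $\LG$ and $V_1$ satisfy the polynomial identities \eqref{R1}, \eqref{R2}, \eqref{R3}, ``a fact certified by a computer calculation,'' which is exactly the matrix-identity verification in $\End(V^{\otimes 2})$, $\End(V^{\otimes 3})$, $\End(V^{\otimes 4})$ that you outline. Your extra step of re-deriving the $a_l$ by solving a linear system is not needed for the proof (the coefficients are given), but it mirrors the discovery procedure described in Section~\ref{sub.how}; also, a small aside: the eigenvalues of $R$ dictated by \eqref{R1} are $-1,\,t_0,\,t_1$ rather than $1,\,-t_0,\,-t_1$.
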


The relation \eqref{R1} is derived from the $R$-matrix $R_{\LG}$
given in Appendix~\ref{sec.Rmat} whose minimal polynomial is cubic with
roots $1,t_0,t_1$. The relation \eqref{R2} was discovered by Ishii \cite{Ish2}.
The existence of a relation \eqref{R3} was proven by Marin--Wagner 
\cite[Sec.6.2, Sec.6.3]{MW} with no explicit description. Since the support of
~\eqref{R3} is important in the reduction algorithm of Theorem~\ref{thm.2}
below, we give its coefficients explicitly, and explain in Section~\eqref{sub.how}
how it was found.

Note that \eqref{R1}, \eqref{R2}
and \eqref{R3} come from relations in the braid groups $B_2$,
$B_3$, and $B_4$ involving 2, 3, and 4 braid strands respectively. 

The proof of the above lemma follows from the fact that the $R$-matrices
for $V_1$ and $\LG$ given in the appendix satisfy the polynomial identities
\eqref{R1}, \eqref{R2} and \eqref{R3}, a fact certified by a computer calculation. 

An important complement of the above skein relations is their completeness, that is
they allow the computation of the invariant for every link. This is achieved
by an effective reduction algorithm given in Theorem~\ref{thm.2} below.
To phrase it, consider the quotient 
\be
\label{Cndef}
C_n=\BQ(t_0,t_1)[B_{n}]/(R_1,R_2,R_3)
\ee
of the group-algebra $\BQ(t_0,t_1)[B_n]$ of the braid group $B_n$ by the
2-sided ideal $(R_1,R_2,R_3)$. $C_n$ is an associative, non-commutative unital
algebra over the field $\BQ(t_0,t_1)$.

Note that if $c \in C_m$ for $m <n$ and $\beta \in B_n$, then
$\beta c, c \beta \in C_n$. In particular, there is a natural map $C_m \to C_n$
obtained from the braid group inclusion $B_m \to B_n$. 

\begin{theorem}
\label{thm.2}
For every $n \geq 3$, there is a reduction algorithm that implies
an equality 
\be
\label{CC}
C_n = C_{n-1} + C_{n-1} s_{n-1} C_{n-1} + C_{n-1} \hs_{n-1} C_{n-1} +
C_{n-2} \hs_{n-1} s_{n-2} \hs_{n-1} 
\ee
of $\BQ(t_0,t_1)$-vector spaces. 
\end{theorem}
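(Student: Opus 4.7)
The goal is to establish $C_n = V_n$, where $V_n$ denotes the right-hand side of~\eqref{CC}, by induction on $n \geq 3$. The inductive step is implemented through an explicit rewriting algorithm that takes any word representing an element of $B_n$ and writes it as a $\BQ(t_0,t_1)$-linear combination of words lying in $V_n$.

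\emph{Base case $n = 3$.} Here $V_3 = C_2 + C_2 s_2 C_2 + C_2 \hs_2 C_2 + C_1 \hs_2 s_1 \hs_2$. Since \eqref{R1} makes $\{1, s_1, \hs_1\}$ span $C_2$, and since \eqref{R3} involves four strands and is vacuous when $n = 3$, a direct computation using \eqref{R1} and \eqref{R2} to reduce all products of the form $s_2^{\pm 1} c s_2^{\pm 1}$ with $c \in C_2$ establishes $C_3 = V_3$.

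\emph{Inductive step.} Assume the theorem for $C_{n-1}$, so every element of $C_{n-1}$ admits a normal-form expansion into the four summands of $V_{n-1}$. Given a word $w \in B_n$, let $k(w)$ count the occurrences of $s_{n-1}^{\pm 1}$ in $w$, and argue by secondary induction on $k(w)$. If $k(w) \leq 1$ then $w$ lies in $B_{n-1}$ or $B_{n-1} s_{n-1}^{\pm 1} B_{n-1}$, hence in the first three summands of $V_n$. If $k(w) \geq 2$, isolate two consecutive occurrences of $s_{n-1}^{\pm 1}$ and write
\[
w = \alpha \cdot s_{n-1}^{\epsilon_1} \gamma s_{n-1}^{\epsilon_2} \cdot \beta, \qquad \alpha, \beta \in B_n, \quad \gamma \in B_{n-1}.
\]
Apply the inductive hypothesis to $\gamma$. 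Since elements of $C_{n-2}$ commute with $s_{n-1}^{\pm 1}$, the core configurations to analyze are $s_{n-1}^{\epsilon_1} s_{n-2}^{\pm 1} s_{n-1}^{\epsilon_2}$ and (from the fourth summand of the expansion of $\gamma$) $s_{n-1}^{\epsilon_1} \hs_{n-2} s_{n-3} \hs_{n-2} s_{n-1}^{\epsilon_2}$. The braid relation reduces $s_{n-1} s_{n-2} s_{n-1}$ and $\hs_{n-1} \hs_{n-2} \hs_{n-1}$; \eqref{R2} reduces the mixed-sign configurations such as $s_{n-1} s_{n-2} \hs_{n-1}$ and $s_{n-1} \hs_{n-2} s_{n-1}$; and \eqref{R1} handles the reduced fourth-summand cases via similar manipulations. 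The only core configuration irreducible under \eqref{R1}, \eqref{R2} and the braid relation is the \emph{trapped block} $\hs_{n-1} s_{n-2} \hs_{n-1}$, which is retained.

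After exhausting all such pairs, the word is reduced to a sum of elements of $V_n$ together with residual words in which a trapped block $\hs_{n-1} s_{n-2} \hs_{n-1}$ sits in the interior, flanked by elements of $B_{n-1}$. To complete the reduction, the trapped block must be moved to the right end with only a $C_{n-2}$-factor on its left. Here \eqref{R3} is decisive: with $i = n-3$, $j = n-2$, $k = n-1$, it encodes a near-commutation between $s_{n-3}$ and $\hs_{n-1} s_{n-2} \hs_{n-1}$. Inspection of the support list~\eqref{w78} shows that each $w_l$ contains at most one letter $s_{n-1}^{\pm 1}$, so every correction term produced by~\eqref{R3} has strictly smaller $k$ and lies in the first three summands of $V_n$. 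Combined with the commutativity of $s_1, \dots, s_{n-4}$ with the trapped block, and with further invocations of~\eqref{R1} to convert $\hs_{n-3}^{\pm 1}$ into polynomials in $s_{n-3}^{\pm 1}$, this allows any $C_{n-2}$-context on the right of the trapped block to be transported across at the cost only of lower-$k$ corrections, which are then absorbed by the secondary induction.

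\emph{Main obstacle.} The principal difficulty is the combinatorial bookkeeping of the case analysis, especially when $\gamma$ or the right context $\beta$ themselves carry inductively trapped subforms $\hs_{n-2} s_{n-3} \hs_{n-2}$ arising from the normal form in $C_{n-1}$. The explicit coefficients of \eqref{R3} and the precise support list \eqref{w78} are both indispensable: they guarantee that every manipulation of the trapped block $\hs_{n-1} s_{n-2} \hs_{n-1}$ introduces only corrections strictly below the current complexity level, ensuring termination of the algorithm and yielding the decomposition~\eqref{CC}.
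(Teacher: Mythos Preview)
Your overall architecture---induction on $n$ with a secondary induction on the number of occurrences of $s_{n-1}^{\pm 1}$, using far-commutativity and \eqref{R3} to slide letters past the trapped block $\hs_{n-1} s_{n-2} \hs_{n-1}$---matches the paper's approach. Your observation that every support word $w_l$ in~\eqref{w78} contains at most one $s_k^{\pm 1}$ is correct and is exactly the content of the 3-letter relations in Lemma~\ref{relations}.

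There is, however, a genuine gap. When the inductive hypothesis is applied to $\gamma \in C_{n-1}$ and $\gamma$ lands in the fourth summand $C_{n-3}\,\hs_{n-2} s_{n-3} \hs_{n-2}$, the resulting core configuration (after commuting the $C_{n-3}$ factor out) is $s_{n-1}^{\epsilon_1}\,\hs_{n-2} s_{n-3} \hs_{n-2}\,s_{n-1}^{\epsilon_2}$. You dispose of these in one line (``\eqref{R1} handles the reduced fourth-summand cases via similar manipulations''), but this is precisely the hardest step of the whole proof and cannot be done by \eqref{R1}, the braid relation, or \eqref{R2} alone. The paper isolates this as Lemma~\ref{fourstrand} (for $n=4$, the essential case) and devotes all of Appendix~\ref{prooffourstrand} to it. The argument there requires repeated interleaved use of \eqref{R1}, \eqref{R2}, \eqref{R3}, and three-strand reductions; it first shows that $s_3 \hs_2 s_1 \hs_2 \hs_3$ and $\hs_3 \hs_2 s_1 \hs_2 s_3$ reduce via a cancellation yielding $(t_0+t_1)\cdot(\text{residue})=0$, which already forces $t_0+t_1$ to be invertible (cf.\ Remark~\ref{rem.denom}); and for the remaining two words $s_3 \hs_2 s_1 \hs_2 s_3$ and $\hs_3 \hs_2 s_1 \hs_2 \hs_3$ it sets up a $2\times 2$ linear system whose determinant $-a_{78}+a_{39}-a_{75}-a_{53}-a_{47}$ is a specific nonzero rational function of $t_0,t_1$. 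So the explicit coefficients of \eqref{R3} are used not merely to bound the complexity of correction terms, as you suggest, but to verify that this system is invertible. Without this lemma your secondary induction on $k(w)$ does not terminate, and the decomposition~\eqref{CC} is not established.
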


In other words, the theorem above asserts that for $n \geq 3$, every braid
$\beta \in B_n$ can be reduced to an element of the right hand side of
Equation~\eqref{CC}.

Theorem~\ref{thm.2} is an effective version of Theorem 5.4 (ii) and Theorem 6.1
(after fixing a typo) of Marin--Wagner~\cite{MW}.

The next remark is important for specialization of this skein theory, e.g., to
the case of $\ADO_\omega$.

\begin{remark}
\label{rem.denom}
Although $C_n$ is a $\BQ(t_0,t_1)$-algebra and the relations \eqref{R1}, \eqref{R2}
and \eqref{R3}
as well as the proof of Theorem~\ref{thm.2} involve denominators, the statement
and the proof are valid if we replace the field $\BQ(t_0,t_1)$ with the ring
\be
\BZ[t_0,t_1, \delta(t_0,t_1)^{-1}]
\ee
where
\be
\delta(t_0,t_1) = t_0 t_1 (t_0 + t_1)(t_0 t_1 +1)(t_0 t_1 -1)(1+t_0)(1+t_1)
(t_0 + t_1 -1) (1 + t_0 t_1 + t_0^2 t_1 + t_0 t_1^2) \,.
\ee
\end{remark}

The next remark concerns the dimension of $C_n$.

\begin{remark}
\label{rem.dimCn}  
A corollary of Theorem~\ref{thm.2} is that $C_n$ is a finite-dimensional
$\BQ(t_0,t_1)$-vector space. In fact, it is conjectured in~\cite{MW} and
in analogous algebras studied in~\cite{Anghel}, that
\be
\dim(C_n) = \frac{(2 n-2)!(2 n-1)!}{((n-1)!n!)^2}
\ee
%% see Mathematica file: anghel/DimensionLGn.nb
with the first few values for $n=2,\dots,9$ given by
\be
3, 20, 175, 1764, 19404, 226512, 2760615, 34763300 \,.
% 449141836, 5924217936
\ee
Unfortunately, Equation~\eqref{CC} for $n=3$ gives only the bound
$\dim(C_3) \leq 22$, and in this case it can be improved to an explicit
spanning set of 20 elements which is linearly independent, hence deducing
$\dim(C_3)=20$; see Corollary~\ref{basisC3} below. But beyond that, although
Theorem~\ref{thm.2} constructs explicit spanning sets for $C_n$, it does not give
sharp bounds for $\dim(C_n)$ for $n>3$.
\end{remark}

A straightforward consequence of Theorem~\ref{thm.2} is the following.

\begin{corollary}
\label{cor.1}  
A link invariant that satisfies the skein relations \eqref{R1}, \eqref{R2}
and \eqref{R3} and vanishes on split links is uniquely
determined by its value on the unknot.
\end{corollary}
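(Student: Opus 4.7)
The plan is to use Alexander's theorem to present every oriented link as a braid closure $\widehat\beta$ for some $\beta\in B_n$, and to reduce the corollary to a uniqueness statement for a linear functional on each $C_n$. Given a link invariant $I$ as in the hypothesis, define $T_n\colon\BQ(t_0,t_1)[B_n]\to\BQ(t_0,t_1)$ by $T_n(\beta)=I(\widehat\beta)$, extended $\BQ(t_0,t_1)$-linearly. Because $I$ satisfies \eqref{R1}, \eqref{R2} and \eqref{R3}, the two-sided ideal defining $C_n$ lies in the kernel of $T_n$, so $T_n$ factors through a linear functional $C_n\to\BQ(t_0,t_1)$. Since every link is a braid closure, it suffices to show that the family $\{T_n\}_{n\geq 1}$ is determined by the single number $T_1(1)=I(\mathrm{unknot})$.

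I would proceed by induction on $n$. The case $n=1$ is immediate; for $n=2$, relation \eqref{R1} shows $C_2$ is spanned by $\{1,s_1,\hs_1\}$, whose closures are the two-component unlink (giving $T_2(1)=0$ by the split-link hypothesis) and two copies of the unknot (giving $T_2(s_1)=T_2(\hs_1)=I(\mathrm{unknot})$). For $n\geq 3$, Theorem~\ref{thm.2} decomposes $C_n$ into the four summands of \eqref{CC}, and linearity reduces the inductive step to evaluating $T_n$ on each. For $a\in B_{n-1}\subset B_n$, the closure $\widehat a$ in $B_n$ is the $B_{n-1}$-closure of $a$ together with an extra unknot strand, a split link, so $T_n(a)=0$. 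For $b\,s_{n-1}\,c$ with $b,c\in B_{n-1}$, Markov cyclic conjugation gives $\widehat{b s_{n-1} c}=\widehat{(cb)\,s_{n-1}}$ and, since $cb\in B_{n-1}$, Markov destabilization yields $T_n(b s_{n-1} c)=T_{n-1}(cb)$; the identical argument applies with $\hs_{n-1}$ in place of $s_{n-1}$. Finally, for $f\in B_{n-2}$, $f$ commutes with $\hs_{n-1}$ (because it involves only $s_1,\dots,s_{n-3}$), so cyclic conjugation combined with this commutation gives $\widehat{f\hs_{n-1}s_{n-2}\hs_{n-1}}=\widehat{\hs_{n-1}^2\,s_{n-2}\,f}$; applying \eqref{R1} to rewrite $\hs_{n-1}^2$ as a $\BQ(t_0,t_1)$-linear combination of $1$, $s_{n-1}$, and $\hs_{n-1}$ reduces this term to the three preceding cases, two of which destabilize and one of which gives a split link. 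In every case $T_n$ is expressed through $T_{n-1}$, closing the induction.

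The only step that is not immediate is the handling of the fourth summand $C_{n-2}\hs_{n-1}s_{n-2}\hs_{n-1}$, which does not directly admit a Markov destabilization; the key maneuver is to cyclically bring the two $\hs_{n-1}$ factors adjacent (using that $f\in B_{n-2}$ commutes with $s_{n-1}^{\pm 1}$) and then to collapse $\hs_{n-1}^2$ via the cubic relation \eqref{R1}, after which each resulting term is either amenable to destabilization or vanishes on a split link. The split-link hypothesis is used exactly to kill the spurious unknot component produced whenever a $B_{n-1}$-braid is regarded inside $B_n$.
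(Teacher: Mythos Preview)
Your proof is correct and follows essentially the same approach as the paper's: use Theorem~\ref{thm.2} to reduce to the four summands of~\eqref{CC}, kill the $C_{n-1}$ summand via the split-link hypothesis, destabilize the $C_{n-1}s_{n-1}^{\pm1}C_{n-1}$ summands via Markov~II after cyclic conjugation, and handle $C_{n-2}\hs_{n-1}s_{n-2}\hs_{n-1}$ by cycling to produce $\hs_{n-1}^2$ and collapsing it with~\eqref{R1}. Your write-up is somewhat more explicit in packaging the induction via the linear functionals $T_n$, but the substance is identical.
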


This corollary combined with Lemma~\ref{lem.R123} and Lemma~\ref{lem.split}
below implies Theorem~\ref{thm.1}.

Corollary~\ref{cor.1} has an alternative formulation that swaps the global condition
of vanishing on diagrams of split links for an extra local skein relation that
involves tangles (as opposed to braids). Consider the relation ~\eqref{S2}
introduced by Ishii \cite{Ish3}:
\be
\label{S2}\tag{$S_2$}
\begin{tikzpicture}[scale=.7,baseline=15]
\draw[thick,->] (.25,1) to[out=90,in=-135] (1,2);
\draw[over, thick] (0,2) to[out=-45,in=90] (.75,1);
\draw[thick,->] (.75,1) to[out=-90,in=45] (0,0);
\draw[over, thick] (.25,1) to[out=-90,in=135] (1,0);
\end{tikzpicture}
-(t_0t_1+1)
\begin{tikzpicture}[scale=.7,baseline=15]
\draw[thick,->] (0,2) to[out=-60,in=90] (.25,1)
to[out=-90,in=60] (0,0);
\draw[thick,->] (1,0) to[out=120,in=-90] (.75,1) to[out=90,in=-120] (1,2);
\end{tikzpicture}
+t_0t_1
\begin{tikzpicture}[scale=.7,baseline=15]
\draw[thick] (0,2) to[out=-45,in=90] (.75,1);
\draw[over,thick,->] (.25,1) to[out=90,in=-135] (1,2);
\draw[thick] (.25,1) to[out=-90,in=135] (1,0);
\draw[over,thick,->] (.75,1) to[out=-90,in=45] (0,0);
\end{tikzpicture}
+
2(t_0-1)(t_1-1)
\begin{tikzpicture}[scale=.7,baseline=15]
\draw[thick,->] (0,1.75) to[out=-60,in=180] (.5,1.25)
to[out=0,in=240] (1,1.75);
\draw[thick,<-] (0,.25) to[out=60,in=180] (.5,.75) to[out=0,in=120] (1,.25);
\end{tikzpicture}
=0\,.
\ee

In \cite[Prop.3.3]{Ish3}, Ishii shows that a link invariant that satisfies
\eqref{R1} and \eqref{S2} vanishes on split links. Therefore Corollary
\ref{cor.1} implies the following result.

\begin{corollary}
\label{carac2}
A link invariant that satisfies the skein relations \eqref{R1}, \eqref{R2},
$\eqref{S2}$ and \eqref{R3} is uniquely determined by its value on the unknot.
\end{corollary}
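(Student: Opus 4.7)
The plan is to deduce Corollary \ref{carac2} directly from Corollary \ref{cor.1} by replacing the global hypothesis ``vanishes on split links'' with the local tangle relation \eqref{S2}. Concretely, let $I$ be a link invariant satisfying \eqref{R1}, \eqref{R2}, \eqref{S2} and \eqref{R3}. To apply Corollary \ref{cor.1}, the only thing left to check is that $I$ vanishes on split links. This is exactly the content of the result of Ishii cited just before the corollary statement, namely \cite[Prop.3.3]{Ish3}, which asserts that any link invariant satisfying \eqref{R1} and \eqref{S2} vanishes on split links. Our $I$ fulfills both of these, so the hypothesis of Corollary \ref{cor.1} is met.

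With the split-link vanishing in hand, Corollary \ref{cor.1} applies verbatim: an invariant satisfying \eqref{R1}, \eqref{R2}, \eqref{R3} and vanishing on split links is uniquely determined by its value on the unknot. Combining these two inputs yields the claimed uniqueness. There is no real obstacle here, since both ingredients are already established; the corollary is, in essence, a convenient repackaging in which the global condition (vanishing on split diagrams) is traded for a purely local, diagrammatic skein relation. The only mild point to note is that \eqref{S2} is a tangle relation rather than a braid relation, but since it is applied only to certify vanishing on split links via Ishii's proposition, it plays no role in the braid-algebra reduction used to prove Theorem~\ref{thm.2} and therefore interacts cleanly with the framework of Corollary~\ref{cor.1}.
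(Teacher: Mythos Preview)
Your proof is correct and follows exactly the paper's own argument: invoke Ishii's result \cite[Prop.3.3]{Ish3} to deduce vanishing on split links from \eqref{R1} and \eqref{S2}, then apply Corollary~\ref{cor.1}.
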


Another consequence of Corollary~\ref{cor.1} is that a rank 1 Nichols
algebra invariant, namely $\ADO_\omega$, is equal to a specialization of a rank 2
Nichols algebra invariant, namely the $\LG$ invariant, as conjectured by Geer and
Patureau-Mirand \cite[Conj.4.7]{GP} and by~\cite{GK:multi}. 

\begin{theorem}
\label{thm.ADO}
For every link $L$ we have 
\be
\ADO_{\omega,L}(t) = \LG_L(t^2,\omega^2 t^{-2}),
\ee
where $\omega = e^{2 \pi i/6}$.
\end{theorem}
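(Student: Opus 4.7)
The plan is to apply Corollary~\ref{cor.1} to the two link invariants $L\mapsto\ADO_{\omega,L}(t)$ and $L\mapsto\LG_L(t^2,\omega^2 t^{-2})$ after specializing the coefficient ring via the homomorphism
\[
\phi\colon \BZ[t_0,t_1,\delta(t_0,t_1)^{-1}]\to\BC(t),\qquad t_0\mapsto t^2,\quad t_1\mapsto\omega^2 t^{-2}.
\]
By Remark~\ref{rem.denom}, the algebras $C_n$ and the reduction algorithm of Theorem~\ref{thm.2} remain well-defined over $\BC(t)$ provided $\phi$ sends $\delta(t_0,t_1)$ to a nonzero element. Since $\phi(t_0 t_1)=\omega^2$ is a primitive cube root of unity, the factors $t_0 t_1\pm 1$ and $1+t_0 t_1+t_0^2 t_1+t_0 t_1^2$ specialize to nonzero constants, and a direct inspection of the remaining factors of $\delta$ shows that each becomes a nonzero Laurent polynomial in $t$, so the specialization is valid.

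Next I would verify the three skein relations after specialization for $\ADO_\omega$. Relation $(R_1)$ reduces to checking that the $\ADO_3$ R-matrix has minimal polynomial with roots $1$, $t^2$, $\omega^2 t^{-2}$, which is a small-matrix eigenvalue computation. Relations $(R_2)$ and $(R_3)$ descend from identities of the $\ADO_3$ R-matrix acting on a tensor power of the underlying module, and hence can be checked by a finite symbolic matrix computation, in the same manner as the certification of Lemma~\ref{lem.R123} for the LG and $V_1$ R-matrices. I would then verify vanishing on split links, a standard consequence of the vanishing of the quantum dimension of the relevant typical module at the sixth root of unity, combined with the modified trace used to define $\ADO_\omega$. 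Finally, a direct computation shows that the two invariants agree on the unknot. With these four ingredients, Corollary~\ref{cor.1} forces equality on every link.

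The main obstacle is the verification of the $78$-term relation $(R_3)$ for the $\ADO_3$ R-matrix after specialization by $\phi$. Although this is a finite, certifiable calculation, its size makes it nontrivial, and one must also take care that the specialization $\phi$ does not accidentally send a nontrivial denominator in the coefficients $a_l$ listed in Appendix~\ref{sec.R123} to zero (which is precisely what the analysis of $\delta$ rules out). A conceptually cleaner but likely more difficult alternative would be to construct a morphism of braided $B_n$-representations embedding the $\ADO_3$ braid representation into a subquotient of the Links--Gould representation at $\phi$; the skein-theoretic route via Corollary~\ref{cor.1} avoids having to exhibit such an intertwiner and reduces the problem to a finite computer calculation.
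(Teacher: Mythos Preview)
Your proposal is correct and follows essentially the same route as the paper: specialize via $(t_0,t_1)=(t^2,\omega^2 t^{-2})$, invoke Remark~\ref{rem.denom} to ensure the reduction algorithm survives the specialization, verify by machine that the $\ADO_\omega$ $R$-matrix satisfies the specialized relations \eqref{R1}, \eqref{R2}, \eqref{R3}, use Lemma~\ref{lem.split} for vanishing on split links and the unknot value, and conclude via Corollary~\ref{cor.1}. Your discussion is in fact more detailed than the paper's, which simply asserts the computer verification and points to Remark~\ref{rem.denom}.
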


This follows from the fact that the $R$-matrix for $\ADO_\omega$ satisfies
the $(t_0,t_1)=(t^2,\omega^2 t^{-2})$ specialization of \eqref{R1}, \eqref{R2}
and \eqref{R3}
and Remark~\ref{rem.denom}.

A partial case of the above theorem for links that come from closures of
5-strand braids was given by Takenov~\cite{Ta}.

Note that the multi-color version of the Geer--Patureau-Mirand conjecture remains
open.

Interestingly, Theorem~\ref{thm.ADO} gives an example of two $R$ matrices on
a 3 and a 4-dimensional vector space with the same knot polynomial invariant.
Any connection between these two $R$-matrices remains to be investigated.

\begin{remark}
The effective proof of Theorem~\ref{thm.2}, which leads to an effective computation
of the $V_1=\LG$-polynomials, is by no means comparable in speed to the tangle
computation of these invariants given in~\cite{GL:patterns}. Indeed, skein theory
computations have apparent exponential complexity whereas tangle computations
tend to have polynomial complexity. 
\end{remark}

\begin{remark}
    A different strategy to prove that $V_1=\LG$ would have been to follow the ideas developed by Geer and Patureau-Mirand in~\cite{GP2}, where they provide a complete set of skein-theoretic relations for the (multivariable) $\LG$ invariant by working in the ribbon category of typical modules of $U_q(\mathfrak{sl}(2|1))$. From this, they derive an algorithm for evaluating the multivariable $\LG$ invariant of links using oriented trivalent graphs colored by generic typical modules. However, bridging the gap between the combinatorial construction of $V_1$ and a formulation of these trivalent graphs without the representation-theoretic background appeared more complicated than simply extending $V_1$ to links, which is the approach we adopted in~\cite{GHKST}.
\end{remark}
  
\subsection{Specialization, Vassiliev invariants and genus bounds for \texorpdfstring{$V_1$}{V1}}

We now discuss some applications of our main Theorem~\ref{thm.1}.

Using the variables $(t_0,t_1)$, the equality of $\LG$ and $V_1$ and previously
known results for $\LG$ \cite{DWIL, Ish, Ko, KoPat} imply the following
corollaries conjectured in~\cite{GK:multi}.

\begin{corollary}
The $V_1$ polynomial of a link $L$ satisfies the specializations
\be
\label{special}
V_{1,L}(t_0 , t_0^{-1}) = \Delta_L(t_0)^2,
\qquad V_{1,L}(t_0 , -t_0^{-1}) = \Delta_L(t_0^2),
\qquad V_{1,L}(t_0,1) = V_{1,L}(1,t_1) = 1
\ee
\end{corollary}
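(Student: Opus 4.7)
The plan is to reduce each of the three specialization identities to already-known results about the Links--Gould polynomial via Theorem~\ref{thm.1}. Indeed, Theorem~\ref{thm.1} asserts the equality
\be
V_{1,L}(t_0,t_1) = \LG_L(t_0,t_1) \in \BZ[t_0^{\pm 1},t_1^{\pm 1}]
\ee
for every oriented link $L$, so any polynomial identity satisfied by $\LG_L(t_0,t_1)$ automatically transfers to $V_{1,L}(t_0,t_1)$. It then suffices to substitute $t_1=t_0^{-1}$, $t_1=-t_0^{-1}$, $t_1=1$, and $t_0=1$ in both sides of this equality and to quote the corresponding specializations of $\LG$.

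Concretely, I would invoke the following three classical specializations of $\LG$. The first, $\LG_L(t_0, t_0^{-1}) = \Delta_L(t_0)^2$, is due to De Wit--Ishii--Links~\cite{DWIL}. The second, $\LG_L(t_0, -t_0^{-1}) = \Delta_L(t_0^2)$, was established by Kohli and Kohli--Patureau-Mirand~\cite{Ko, KoPat}. The third pair of identities $\LG_L(t_0, 1) = \LG_L(1, t_1) = 1$ follows from work of Ishii~\cite{Ish}, where the $R$-matrix of $\LG$ is shown to degenerate at these boundary values in a way that reduces the invariant to that of the unknot. Substituting into Theorem~\ref{thm.1} gives the three equalities claimed in~\eqref{special}.

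Because this is a direct pull-back of known statements through Theorem~\ref{thm.1}, there is no substantive obstacle in the argument. The only point that deserves care, and which I would verify explicitly at the start of the proof, is that the variable conventions $(t_0,t_1)$ fixed in the present paper (in particular those encoded in the $R$-matrix $R_{\LG}$ of Appendix~\ref{sec.Rmat}) coincide with the conventions used in~\cite{DWIL, Ish, Ko, KoPat}; if any of those references uses variables $(q,p)$ or $(t_0^{\pm 1},t_1^{\pm 1})$ up to an involution, one needs to compose the specializations with the appropriate symmetry of $\LG$ before reading off~\eqref{special}. Once this bookkeeping is done, the three specializations follow immediately, and the corollary is proved.
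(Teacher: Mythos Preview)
Your proposal is correct and matches the paper's approach exactly: the paper states that the corollary follows from the equality $\LG=V_1$ of Theorem~\ref{thm.1} together with the previously known specializations of $\LG$ from \cite{DWIL, Ish, Ko, KoPat}, and offers no further argument. Your remark about checking variable conventions is a sensible precaution but does not depart from the paper's reasoning.
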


The genus bounds for $\LG$ from~\cite{KT} imply the following.

\begin{corollary}
For a knot $K$, we have the bound 
\be
\mathrm{deg}_t V_{1,K}(t,q) \leq 4 \, \text{genus}(K) \,,
\ee
where $\deg_t$ of a Laurent polynomial in $t$ is the difference between the highest and the lowest power of $t$ and $\text{genus}(K)$ is the minimal genus
of an embedded oriented surface spanning $K$. 
\end{corollary}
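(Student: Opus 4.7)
The plan is direct: combine Theorem~\ref{thm.1} with the genus bound for the Links--Gould polynomial established by Kohli--Tahar in \cite{KT}. First I would use Theorem~\ref{thm.1} to identify $V_{1,K}(t,q) = \LG_K(t,q)$ as elements of $\BZ[t^{\pm 1}, q^{\pm 1}]$, where the variables $(t,q)$ of the $V_1$-polynomial correspond to the variables $(t_0, t_1)$ of the Links--Gould polynomial. Then I would invoke the Kohli--Tahar theorem, which asserts precisely that $\deg_t \LG_K(t,q) \leq 4 \cdot \mathrm{genus}(K)$ for any knot $K$. Chaining the equality with the inequality gives the stated bound.

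Since both inputs are already in place, there is no genuine obstacle beyond bookkeeping: one should check that the variable identifications in Theorem~\ref{thm.1} are compatible with the normalization of $\LG$ used in \cite{KT}, and observe that $\deg_t$ is well-defined for a Laurent polynomial in $\BZ[t^{\pm 1}, q^{\pm 1}]$ as the difference between the top and bottom powers of $t$ in the expansion. As noted in the introduction, the real work lies not in this corollary but in the announced extension to a bound that is independent of the level $n$ for the $V_n$ and colored $\LG$ polynomials, which is beyond the scope of the present argument.
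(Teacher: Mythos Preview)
Your proposal is correct and matches the paper's approach exactly: the paper gives no detailed proof, simply stating that ``the genus bounds for $\LG$ from~\cite{KT} imply the following,'' which is precisely the combination of Theorem~\ref{thm.1} with the Kohli--Tahar result that you describe. Your caveat about checking variable conventions is well-placed, since by Remark~\ref{rem.1} the pair $(t,q)$ used for $V_1$ in \cite{GK:multi} is related to $(t_0,t_1)$ by a nontrivial monomial change of variables rather than a straight renaming, so the $t$-degree bound requires a short translation from whatever variable \cite{KT} bounds; but this is exactly the bookkeeping you flagged.
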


Vassiliev power series invariants were introduced in~\cite{BG:MMR}.
Geer~\cite{Geer:vas} showed that the Links--Gould polynomial is obtained from
the evaluation of the Kontsevich integral under the $\mathfrak{sl}(2|1)$ weight-system. 

\begin{corollary}
\label{cor.vassiliev}  
The oriented link polynomial $V_1$ is a Vassiliev power series invariant.
\end{corollary}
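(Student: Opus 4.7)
The plan is short: the corollary follows by combining Theorem~\ref{thm.1} with Geer's theorem on the Kontsevich integral of $\LG$. First, by Theorem~\ref{thm.1}, for every oriented link $L$ we have the equality of Laurent polynomials
\be
V_{1,L}(t_0,t_1) = \LG_L(t_0,t_1) \in \BZ[t_0^{\pm 1}, t_1^{\pm 1}],
\ee
so any property of $\LG$ that survives a change of variable passes verbatim to $V_1$.

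Next, I would invoke Geer's result \cite{Geer:vas}, which identifies $\LG$ with the evaluation of the universal Kontsevich integral $Z(L) \in \widehat{\mathcal A}(L)$ under the weight system $W_{\mathfrak{sl}(2|1)}$ associated to a suitable typical representation of $\mathfrak{sl}(2|1)$. After the standard substitution $t_0 = e^{\alpha h}$, $t_1 = e^{\beta h}$ (matching Geer's conventions for the Cartan weights of the typical module), the identity $\LG_L = W_{\mathfrak{sl}(2|1)} \circ Z(L)$ produces a formal power series in $h$ whose coefficient of $h^n$ is a finite linear combination of evaluations of chord diagrams of degree $\le n$ on $Z(L)$. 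By the fundamental theorem on Vassiliev invariants from \cite{BG:MMR}, each such coefficient is a Vassiliev invariant of degree $\le n$, so the whole series is a Vassiliev power series invariant in the sense of loc.~cit.

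Combining the two, the substitution $t_0 = e^{\alpha h}$, $t_1 = e^{\beta h}$ turns $V_{1,L}(t_0,t_1)$ into precisely the same formal power series in $h$, whose coefficients are therefore Vassiliev invariants of bounded degree, proving the corollary. The only point that requires a little care, and which I would regard as the main (mild) obstacle, is checking that the change of variables used to exhibit $\LG$ as a power series in Geer's framework is compatible with the normalization of $V_1$ used throughout the paper; given Theorem~\ref{thm.1}, this reduces to a bookkeeping check of the Cartan weights for the $\mathfrak{sl}(2|1)$ typical module against the parameters $(t_0,t_1)$ in \cite{GK:multi}, with no new analytic or combinatorial content.
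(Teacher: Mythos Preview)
Your proposal is correct and follows exactly the route the paper intends: the corollary is stated immediately after the sentence recalling Geer's result that $\LG$ is obtained from the Kontsevich integral via the $\mathfrak{sl}(2|1)$ weight system, and is meant to follow by combining this with Theorem~\ref{thm.1}. The paper gives no further argument, so your expansion is precisely the implicit proof.
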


\begin{remark}
\label{rem.1} 
Note that there are three sets of variables used in the literature, namely
$(t_0,t_1)$ introduced by Ishii~\cite{Ish}, $(q^\a,q)$ used in the
context of representation theory e.g. to study $\LG$~\cite{LG, KT} and $(t,\tq)$
used in~\cite{GK:multi}. This is a point that leads to much confusion. The
relations between these different sets of variables are
\be
\label{variables}
(t_0, t_1)=(t \tq^{-n/2}, t^{-1}\tq^{-n/2}),
\qquad (q^\a,q) = (t^{-1/2} \tq^{1/4}, \tq^{-1/2}) \,. 
\ee
\end{remark}  

In this work we mostly use the known properties of $\LG$ to deduce similar
results for the $V_1$-polynomial, but we can also do the converse. A corollary of
Theorem~\ref{thm.1} is the following nontrivial symmetry of the
$\LG$-polynomial due to Ishii. 

\begin{corollary}[{\cite[Thm.1]{Ish}}]
For any link $L$, we have
\be
\LG_L(t_0,t_1) \in \BZ[t_0^{\pm 1},t_1^{\pm 1}] \,.
\ee
\end{corollary}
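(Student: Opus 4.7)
The plan is to reduce the integrality statement for $\LG_L$ to an integrality statement for $V_{1,L}$, and then to verify the latter directly from the state-sum definition of $V_1$. By Theorem~\ref{thm.1}, we have the equality $\LG_L(t_0,t_1) = V_{1,L}(t_0,t_1)$ in whatever ambient ring the two invariants a priori live, so it suffices to establish
\be
V_{1,L}(t_0,t_1) \in \BZ[t_0^{\pm 1}, t_1^{\pm 1}]
\ee
for every oriented link $L$.

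First I would inspect the explicit $R$-matrix $R_{V_1}$ given in Appendix~\ref{sec.Rmat} and verify that, in the variables $(t_0,t_1)$ of Ishii, both $R_{V_1}$ and $R_{V_1}^{-1}$ have entries in $\BZ[t_0^{\pm 1}, t_1^{\pm 1}]$; the same must be checked for the (co)evaluation morphisms used to close a long knot. Granting this, the Reshetikhin--Turaev state sum along any braid word for a long knot representative of $L$ is a finite sum of products of entries of $R_{V_1}^{\pm 1}$ and of the cups and caps, hence lies in $\BZ[t_0^{\pm 1}, t_1^{\pm 1}]$. This settles the long knot case.

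Next I would extend the conclusion from long knots to framed oriented links. The extension of the long-knot $V_1$ to a framing-independent link invariant was carried out in~\cite{GHKST}; the construction there proceeds by closing one strand via a partial trace against an explicit pivotal structure. I would check that this pivotal element, expressed in the same $(t_0,t_1)$ coordinates, again has entries in $\BZ[t_0^{\pm 1}, t_1^{\pm 1}]$, so that the partial trace preserves integrality. Combined with the long-knot case this gives $V_{1,L} \in \BZ[t_0^{\pm 1}, t_1^{\pm 1}]$ for every link $L$, and then Theorem~\ref{thm.1} delivers the corollary.

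The main obstacle I anticipate is bookkeeping rather than conceptual: the $R$-matrix for $V_1$ is naturally written in the $(t,\tq)$ variables of~\cite{GK:multi}, related to $(t_0,t_1)$ by the change of variables recorded in Remark~\ref{rem.1}, which involves square roots. One must check that all half-integer powers cancel in the end, both in $R_{V_1}^{\pm 1}$ and in the cup, cap, and pivotal morphisms, so that everything genuinely lies in $\BZ[t_0^{\pm 1}, t_1^{\pm 1}]$ and not merely in $\BZ[t_0^{\pm 1/2}, t_1^{\pm 1/2}]$. Once these cancellations are verified matrix-entry by matrix-entry in the appendix, the argument above goes through without further work.
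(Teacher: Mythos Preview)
Your approach is correct and is exactly what the paper intends: the corollary is stated as an immediate consequence of Theorem~\ref{thm.1}, the point being that the $V_1$ state-sum is manifestly integral while the $\LG$ one is not (because $R_{\LG}$ involves $Y=\sqrt{(t_0-1)(1-t_1)}$).

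Your anticipated obstacle, however, does not actually arise. The matrix $R_V$ is given in Appendix~\ref{sec.Rmat} directly in the $(t_0,t_1)$ variables, with all entries visibly in $\BZ[t_0^{\pm 1},t_1^{\pm 1}]$; no passage through $(t,\tq)$ and no half-integer powers occur. The enhancement $h_V=\diag(-1,1,1,-1)$ given in~\eqref{3h} has integer entries, so the partial trace in~\eqref{FLcut} stays in $\BZ[t_0^{\pm 1},t_1^{\pm 1}]$. The only remaining checks are that $R_V^{-1}$ also has entries in $\BZ[t_0^{\pm 1},t_1^{\pm 1}]$ (a direct computation), and that the factor $1/\dim(V)=1/4$ in the second line of~\eqref{FLcut} does not spoil integrality. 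For the latter, property $(P_1)$ guarantees that $\RT_{L^{\mathrm{cut}}}\in\End(V)$ is a scalar multiple of the identity, so the invariant can be read off from any single diagonal entry of the partial trace, avoiding the division entirely.
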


We end this section with a caution regarding deducing statements by specialization
of skein-theory.

\begin{remark}
\label{rem.noskein}
Although the Alexander polynomial comes from an enhanced $R$-matrix, the skein
theory approach that proves Theorem~\ref{thm.1} cannot be used to provide
novel proofs of either of the known specializations
\be
\LG_L(t_0,t_0^{-1})=\Delta_L(t_0)^2, \qquad \LG_L(t_0,-t_0^{-1})=\Delta_L(t_0^2)
\ee
of \cite{DWIL, Ish, Ko, KoPat}. This is because the left side of the \eqref{R3} relation
vanishes under these specializations and thus, the reduction algorithm does
not apply. However, the symmetry of the relations \eqref{R1}, \eqref{R2},
\eqref{R3} in $t_0$ and $t_1$ gives an alternative proof of the fact 
\be
\begin{aligned}
\LG_L(t_0,t_1)=\LG_L(t_1,t_0)\,.
\end{aligned}
\ee
\end{remark}

\subsection{Organization of the paper}

In Section~\ref{sec.skein} we introduce the main properties of skein
relations based on the braid group and use them to prove Theorem~\ref{thm.2}. In
Section~\ref{sec.RT} we recall briefly the definition and common properties
of the three link polynomials $\LG$, $V_1$ and $\ADO_\omega$ that are the focus
of our paper. We also explain how we found the explicit relation \eqref{R3}.

In Appendix~\ref{sec.Rmat} we give the $R$-matrices of the three polynomial
invariants that we study, and in Appendix ~\ref{sec.R123} we give the
lengthy coefficients of the skein relation \eqref{R3}. Appendices
~\ref{proofthreestrand} and~\ref{prooffourstrand} are dedicated to the proof of two
technical results used in the proof of Theorem~\ref{thm.2}.

\subsection*{Acknowledgements}

The authors would like to thank Ivan Marin, J\'{o}zef Przytycki and Emmanuel Wagner
for enlightening conversations, as well as the anonymous referee for the valuable remarks. MH was partially supported through the NSF-RTG grant
\#DMS-2135960. BMK was partially supported through the BJNSF grant \#IS24066.

%%%%%%%%%%%%%%%%%%%%%%%%%%%%%%%%%%%%%%%%%%%%%%%%%%%%%%%%%%%%%%%%%%%%%%%%%%%% 
%%%%%%%%%%%%%%%%%%%%%%%%%%%%%%%%%%%%%%%%%%%%%%%%%%%%%%%%%%%%%%%%%%%%%%%%%%%%

\section{Skein theory for the \texorpdfstring{$\LG$}{LG} polynomial}
\label{sec.skein}

In this section we prove Theorem~\ref{thm.2} with a reduction algorithm.
Recall that the braid group $B_n$ has standard generators 
$s_i$ with inverses $\hs_i$ for $i=1,\dots,n-1$ shown in Figure~\ref{fig:FIG1}.
They satisfy the relations
\be
s_i s_j s_i = s_j s_i s_j\,\,\mbox{for } |i-j|=1,
\qquad s_i s_j = s_j s_i \,\,\mbox{for } |i-j|>1 \,.
\ee

Fix a braid $\beta \in B_n$ for $n \geq 2$. We will prove that it can be reduced
to belong to the $\BQ(t_0,t_1)$-vector space on the right hand side of Equation~\eqref{CC}.

Note that if $n=2$, it follows from \eqref{R1} that every braid in $B_2$ can
be reduced to a linear combination of $1, s_1, \hs_1$.

We will first prove Theorem~\ref{thm.2} for $n = 3, 4$, which are the hardest cases.
Then we prove the result by induction on $n \geq 5$.

\subsection{The \texorpdfstring{$n=3$}{n = 3} case}

Let $\beta \in B_3$. By applying \eqref{R1}, reduce it to the case 
\be
\beta = s_1^{\varepsilon_1} s_2^{\varepsilon_2} s_1^{\varepsilon_3}
\ldots \text{ or } \beta = s_2^{\varepsilon_1} s_1^{\varepsilon_2}
s_2^{\varepsilon_3} \ldots \text{ , } \varepsilon_i = \pm 1.
\ee

\begin{lemma}[Equivalent formulation of \eqref{R2}]\label{equivR2}
The following relation is equivalent to relation \eqref{R2} modulo relation
\eqref{R1}. For $1\leq i\leq n-2$ and $j=i+1$:
\be\label{modR2}
\begin{aligned}
s_j \hs_i s_j& = (t_0 t_1 +1 -t_0 -t_1) \, s_i + (t_0 + t_1 - t_0 t_1 -1) \, \hs_i \\
& \phantom{sii}- (t_0 + t_1 -1) \, s_i s_j + (t_0 + t_1 -1) \,
\hs_i s_j + t_0 t_1 \, s_i \hs_j - t_0 t_1 \, \hs_i \hs_j \\
&\phantom{sii} + s_j s_i - s_j \hs_i - (t_0 + t_1 - t_0 t_1) \,
\hs_j s_i + (t_0 + t_1 - t_0 t_1) \, \hs_j \hs_i \\
&\phantom{sii} - (t_0 + t_1) \, s_i s_j \hs_i + (t_0 + t_1) \,
s_i \hs_j \hs_i + s_i s_j s_i - t_0 t_1 \, \hs_i \hs_j \hs_i % \\ &
% \phantom{sii}
+ t_0 t_1 \, \hs_j s_i \hs_j \,.
\end{aligned}
\ee
\end{lemma}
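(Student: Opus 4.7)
The plan is to establish the two directions of the equivalence by a direct algebraic computation in $\BQ(t_0,t_1)[B_n]$, using only the braid-group relation $s_i s_j s_i = s_j s_i s_j$ (for $j=i+1$), its inverse $\hs_i \hs_j \hs_i = \hs_j \hs_i \hs_j$, and the cubic identity \eqref{R1}. From \eqref{R1} one first extracts the reduction rules
\begin{align*}
s_i^{\,2} &\equiv (t_0+t_1-1)\, s_i + (t_0+t_1-t_0 t_1) - t_0 t_1\, \hs_i, \\
\hs_i^{\,2} &\equiv \tfrac{t_0+t_1-t_0 t_1}{t_0 t_1}\, \hs_i + \tfrac{t_0+t_1-1}{t_0 t_1} - \tfrac{1}{t_0 t_1}\, s_i,
\end{align*}
together with the analogous rules for $j$. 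These rules let me collapse every word in which a generator appears squared.

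To derive \eqref{modR2} from \eqref{R2}, I would multiply \eqref{R2} on the right by $s_j$ and rewrite each resulting length-four monomial with the braid relation. For instance $\hs_i s_j s_i \cdot s_j = \hs_i (s_j s_i s_j) = \hs_i (s_i s_j s_i) = s_j s_i$, and similarly $\hs_i \hs_j s_i s_j$ collapses after using $\hs_i s_j = s_j s_i \hs_j \hs_i$ (a consequence of $s_i s_j s_i = s_j s_i s_j$). This procedure exposes the monomial $s_j \hs_i s_j$ inside the rewritten identity; isolating it and reducing the remaining squared factors by the rules above gives exactly \eqref{modR2}.

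For the converse implication, starting from \eqref{modR2} I would multiply by $\hs_j$ on the left (using $\hs_j s_j = 1$) and by appropriate braid elements on the right to produce the four monomials $\hs_i s_j s_i$, $s_i s_j \hs_i$, $\hs_i \hs_j s_i$, $s_i \hs_j \hs_i$ appearing on the LHS of \eqref{R2}. Reducing via \eqref{R1} the auxiliary squares that the multiplication introduces yields precisely the combination \eqref{R2}.

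The main obstacle is bookkeeping: every application of the reduction rule for $s_i^{\,2}$ or $\hs_i^{\,2}$ produces three summands with rational coefficients in $t_0,t_1$, and \eqref{modR2} has a large number of terms on its right-hand side, so verifying that after all rewrites one lands exactly on the claimed linear combination is a lengthy but mechanical symbolic computation. In practice I would either carry it out by hand while organizing the monomials in a fixed order on the PBW-style basis $\{s_i,\hs_i,1\}\cdot\{s_j,\hs_j,1\}\cdot\{s_i,\hs_i,1\}$, or certify it by a short computer-algebra check, confirming that the difference of the two relations lies in the two-sided ideal generated by \eqref{R1}.
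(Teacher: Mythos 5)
Your strategy is exactly the paper's: the proof there also right-multiplies \eqref{R2} (solved for $s_j\hs_i$) by $s_j$, collapses the resulting length-four words with the braid relation, and reduces the squares $s_i^2$, $\hs_i^2$, $s_j^2$, $\hs_j^2$ via \eqref{R1} to isolate $s_j\hs_i s_j$ as the stated linear combination. The only difference is that the paper carries out the full coefficient bookkeeping explicitly (and leaves the converse implicit, since every step is reversible), whereas you defer it to a mechanical symbolic check; the approach is correct.
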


\begin{proof}
We start by writing $\eqref{R2} \cdot s_j$ : 
\begin{align*}
  (s_j \hs_i) s_j&  =   (\hs_i s_j s_i) s_j - (s_i s_j \hs_i)s_j
  - (\hs_i \hs_j s_i) s_j + (s_i \hs_j \hs_i) s_j - (s_i s_j)s_j + (s_i \hs_j)s_j \\
  &\phantom{sii} + (\hs_i s_j)s_j
  - (\hs_i \hs_j)s_j + (s_j s_i)s_j - (\hs_j s_i)s_j + (\hs_j \hs_i)s_j\\
   &  = \hs_i (s_i s_j s_i) - \hs_j s_i s_j s_j - s_j \hs_i \hs_j s_j
   + s_i s_i \hs_j \hs_i - s_i((t_0 + t_1 -1) \, s_j \\
   &\phantom{sii}  +(t_0 + t_1 - t_0 t_1) \,
   1 - t_0 t_1 \, \hs_j) + s_i \\
   &\phantom{sii} + \hs_i((t_0 + t_1 -1) \, s_j +(t_0 + t_1 - t_0 t_1) \,
   1 - t_0 t_1 \, \hs_j) - \hs_i + s_i s_j s_i - s_i s_j \hs_i + s_i \hs_j \hs_i\\
    &   =   s_j s_i - \hs_j s_i ((t_0 + t_1 -1) \, s_j +(t_0 + t_1 - t_0 t_1)
    \, 1 - t_0 t_1 \, \hs_j) - s_j \hs_i \\
   &\phantom{sii} + ((t_0 + t_1 -1) \, s_i +(t_0 + t_1 - t_0 t_1)
   \, 1 - t_0 t_1 \, \hs_i) \hs_j \hs_i \\
   &\phantom{sii} - (t_0 + t_1 -1) \, s_i s_j - (t_0 + t_1 - t_0 t_1)
   \, s_i + t_0 t_1 \, s_i \hs_j + s_i\\
   &\phantom{sii}  + (t_0 + t_1 -1) \, \hs_i s_j + (t_0 + t_1 - t_0 t_1)
   \, \hs_i - t_0 t_1 \, \hs_i \hs_j - \hs_i + s_i s_j s_i - s_i s_j \hs_i
   + s_i \hs_j \hs_i  \\
    &   =  s_j s_i - (t_0 + t_1 -1) \, s_i s_j \hs_i - (t_0 + t_1 - t_0 t_1)
    \, \hs_j s_i + t_0 t_1 \, \hs_j s_i \hs_j\\
    &\phantom{sii} - s_j \hs_i + (t_0 + t_1 -1) \, s_i \hs_j \hs_i
    + (t_0 + t_1 - t_0 t_1)
   \, \hs_j \hs_i - t_0 t_1 \, \hs_i \hs_j \hs_i\\
   &\phantom{sii} - (t_0 + t_1 -1) \, s_i s_j - (t_0 + t_1 - t_0 t_1)
   \, s_i + t_0 t_1 \, s_i \hs_j + s_i\\
   &\phantom{sii} + (t_0 + t_1 -1) \, \hs_i s_j + (t_0 + t_1 - t_0 t_1)
   \, \hs_i - t_0 t_1 \, \hs_i \hs_j - \hs_i + s_i s_j s_i - s_i s_j \hs_i
   + s_i \hs_j \hs_i\\
   &   =  (t_0 t_1 +1 -t_0 -t_1) \, s_i + (t_0 + t_1 - t_0 t_1 -1) \,
   \hs_i - (t_0 + t_1 -1) \, s_i s_j \\
   &\phantom{sii} + (t_0 + t_1 -1)
   \, \hs_i s_j + t_0 t_1 \, s_i \hs_j - t_0 t_1 \, \hs_i \hs_j
   + s_j s_i - s_j \hs_i - (t_0 + t_1 - t_0 t_1)
   \, \hs_j s_i\\
   &\phantom{sii}  + (t_0 + t_1 - t_0 t_1) \, \hs_j \hs_i
   - (t_0 + t_1) \, s_i s_j \hs_i + (t_0 + t_1)
   \, s_i \hs_j \hs_i + s_i s_j s_i - t_0 t_1 \, \hs_i \hs_j \hs_i\\
   &\phantom{sii} + t_0 t_1 \, \hs_j s_i \hs_j \,.
\end{align*}
\end{proof}

We now consider the inner automorphism $b \mapsto \check{b}$ of the braid group
$B_n$ defined by
\be
\label{innerB}
\check{b_n} = g_n^{-1} b g_n
\ee
where $g_n = (s_1 s_2 \ldots s_{n-1}) \ldots (s_1 s_2 s_3) (s_1 s_2) (s_1)$ is a
half twist in $n$ strands. Topologically, $\check{b}$ is braid $b$
``looked at from behind''.  It is easy to show the following. 

\begin{lemma}
\label{innerautomorphism}
The following hold true:
\begin{itemize}
\item
  for all $k=1,\dots,n-1$, we have $\widecheck{s_k} = s_{n-k}$,
\item
  for all $b, c \in B_n$, $\widecheck{b c} = \check{b} \check{c},$
\item
  for all $b \in B_n$, braids $b$ and $ \check{b}$ have the same link closure.
\end{itemize}
\end{lemma}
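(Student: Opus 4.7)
The plan is to verify the three bullets in turn. Only the first one carries actual content; the other two are formal consequences of general facts about conjugation in groups and about the braid closure operation, and I would identify point~(i) as the main (mild) obstacle.

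Point~(ii) is immediate from the definition: $\check{b} = g_n^{-1} b g_n$ is inner conjugation by $g_n$, and inner conjugation is always a group endomorphism of $B_n$, so $\widecheck{bc} = g_n^{-1}(bc)g_n = (g_n^{-1}bg_n)(g_n^{-1}cg_n) = \check{b}\check{c}$. No braid-specific input is needed.

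Point~(iii) follows from the cyclic invariance of the braid closure, $\widehat{xy} = \widehat{yx}$ for all $x, y \in B_n$, which is one of Markov's moves and is verified pictorially by sliding a factor around the closure arcs. Applying it with $x = g_n^{-1}$ and $y = b g_n$ gives $\widehat{\check{b}} = \widehat{g_n^{-1}(b g_n)} = \widehat{(b g_n)g_n^{-1}} = \hat{b}$, as desired.

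For point~(i) the cleanest approach is topological: the element $g_n$ is the standard half-twist (Garside element) of $B_n$, represented geometrically by a rotation of the $n$ endpoints by angle $\pi$ about their common center. This rotation sends the endpoint in position $i$ to position $n+1-i$, and therefore conjugates the elementary positive crossing $s_k$ between positions $k$ and $k+1$ to the positive crossing between positions $n-k$ and $n+1-k$, which is precisely $s_{n-k}$. A purely algebraic proof is also available by induction on $n$, starting from the decomposition $g_n = (s_1 s_2 \cdots s_{n-1})\, g_{n-1}$ together with the auxiliary identity $(s_1 s_2 \cdots s_{n-1})\, s_k\, (s_1 s_2 \cdots s_{n-1})^{-1} = s_{k+1}$ for $1 \leq k \leq n-2$ (proved by repeated use of the commutation $s_i s_j = s_j s_i$ for $|i-j|\geq 2$ and the braid relation $s_i s_{i+1} s_i = s_{i+1} s_i s_{i+1}$); the residual case $k = n-1$ is handled by exploiting the symmetric ``dual'' factorization of $g_n$. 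In the write-up I would simply invoke this as the classical property of the Garside element of $B_n$.
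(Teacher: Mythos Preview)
Your proposal is correct and matches the paper's treatment: the paper does not write out a proof at all, simply stating ``It is easy to show the following'' after noting that $g_n$ is the half twist and that $\check{b}$ is topologically ``$b$ looked at from behind''. Your topological argument for (i) is exactly this observation made precise, and your verifications of (ii) and (iii) are the standard one-line checks the paper leaves implicit.
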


This automorphism will be used in the proof of Lemma~\ref{relations} below as well as in numerous locations in the Appendix~\ref{proofthreestrand} and ~\ref{prooffourstrand}. In addition, applying this automorphism shows that \eqref{modR2} of Lemma \ref{equivR2} is also true if you exchange the roles of $i$ and $j$, even though they do not play symmetric roles in the relation.

\begin{lemma}
\label{relations}
The following relations hold in $C_n$. In each $l$-letter relation, we assume
$1\leq i\leq n-l$, $j=i+1$, and $k=i+2$.\\
{
\noindent\underline{1-letter relations}
\begin{itemize}
  \item Inverse relation:
  \be
  \begin{aligned}
s_i \hs_i = \hs_i s_i = 1 \,,
\end{aligned}
\ee 
  \item Relation \eqref{R1} and its equivalent version: 
  \be
  \begin{aligned}
s_i^2 &= (t_0 + t_1 - 1) \, s_i + (t_0 + t_1 - t_0 t_1) \,1
- (t_0 t_1) \, \hs_i \,, \\
\hs_i^2 &= (t_0^{-1} + t_1^{-1} - 1) \, \hs_i + (t_0^{-1} + t_1^{-1}
- t_0^{-1} t_1^{-1}) \,1
- (t_0^{-1} t_1^{-1}) \, s_i\,.
\end{aligned}
\ee
\end{itemize}
\bigskip
\underline{2-letter relations}
\begin{itemize}
\item
  Far commutativity:
    \be\begin{aligned}s_l^{\pm 1} s_m^{\pm 1} = s_m^{\pm 1} s_l^{\pm 1} \,
      \text{ for $\vert l-m \vert \geq 2$}\,, 
\end{aligned}
\ee
\item
  The braid relation and its equivalent formulations:
  \be
  \begin{aligned}
      s_i s_j s_i &= s_j s_i s_j\,,
  \\
  (s_j^a s_i^a) s_j^b = s_i^b (s_j^a s_i^a) &\text{ and }
  (s_i^a s_j^a) s_i^b = s_j^b (s_i^a s_j^a) &
  \mbox{for $a,b=\pm1$,}
  \end{aligned}
  \ee
for $|i-j|=1$. 
\item
  Relation \eqref{R2} and its equivalent version:
\be
{\begin{aligned}
    \hs_i s_j &s_i - s_i s_j \hs_i - \hs_i \hs_j s_i + s_i \hs_j \hs_i =\\
    &s_i s_j - s_i \hs_j - \hs_i s_j + \hs_i \hs_j - s_j s_i + s_j \hs_i
    + \hs_j s_i - \hs_j \hs_i \,,
\end{aligned}}\ee\be
{\begin{aligned}
    \hs_j s_i &s_j - s_j s_i \hs_j - \hs_j \hs_i s_j + s_j \hs_i \hs_j =\\
    &s_j s_i - s_j \hs_i - \hs_j s_i + \hs_j \hs_i - s_i s_j + s_i \hs_j
    + \hs_i s_j - \hs_i \hs_j\,.
\end{aligned}}
\ee
\end{itemize}
\underline{3-letter relations}
\begin{itemize}
\item Relations implied by \eqref{R3}:
\be
\begin{aligned}
&s_i^{\pm 1} \hs_k s_j \hs_k = \hs_k s_j \hs_k s_i^{\pm 1} + \alpha \,, 
& 
&s_i^{\pm 1} s_k \hs_j  s_k = s_k \hs_j s_k s_i^{\pm 1} + \gamma \,,
\\
&s_k^{\pm 1} \hs_i s_j \hs_i = \hs_i s_j \hs_i s_k^{\pm 1} + \delta \,, 
&
&s_k^{\pm 1} s_i \hs_j s_i = s_i \hs_j s_i s_k^{\pm 1} + \eta \,,
\end{aligned}
\ee
\end{itemize}
where $\alpha$, $\gamma$ are $\BQ(t_0,t_1)$-linear combinations of braid words
with at most 4 letters and at most one $s_k^{\pm 1}$, and $\delta$, $\eta$ are
$\BQ(t_0,t_1)$-linear combinations of braid words with at most 4 letters and at
most one $s_i^{\pm 1}$.}
\end{lemma}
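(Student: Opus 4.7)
The plan is to dispatch the one- and two-letter relations by direct rearrangement in $C_n$, and to obtain the four families of three-letter relations from \eqref{R3} by combining multiplication by $\hs_i$, the inner automorphism $\check{\,\cdot\,}$ of Lemma~\ref{innerautomorphism}, and a bar-type involution that exchanges $s_i$ with $\hs_i$ and inverts $t_0,t_1$.

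For the one-letter relations, $s_i\hs_i=\hs_is_i=1$ and the first form of \eqref{R1} are built into $C_n$; the expression for $\hs_i^2$ as an affine combination of $1$, $s_i$, $\hs_i$ is obtained by multiplying \eqref{R1} on the right by $\hs_i$, using $s_i\hs_i=1$, and solving the resulting linear equation for $\hs_i^2$. For the two-letter relations, far-commutativity and the braid relation $s_is_js_i=s_js_is_j$ already hold in $B_n$, and the partner forms $(s_j^as_i^a)s_j^b=s_i^b(s_j^as_i^a)$ and $(s_i^as_j^a)s_i^b=s_j^b(s_i^as_j^a)$ follow from the braid relation by conjugation; the $(i,j)$-swap of \eqref{R2} is obtained either by rerunning the derivation of Lemma~\ref{equivR2} with the two generators exchanged, or equivalently by applying $\check{\,\cdot\,}$ in $B_3$, which interchanges $s_1$ with $s_2$ and preserves the ideal $(R_1,R_2)$.

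The three-letter relations split into four families related pairwise by two symmetries of $C_n$. The $\varepsilon=+1$ case of the first family, $s_i\hs_ks_j\hs_k=\hs_ks_j\hs_ks_i+\alpha$, is literally \eqref{R3}. The $\varepsilon=-1$ case is produced by sandwiching \eqref{R3} between $\hs_i$ on both sides, yielding
\be
\hs_ks_j\hs_k\hs_i-\hs_i\hs_ks_j\hs_k \;=\; \hs_i\Bigl(\textstyle\sum_{l=1}^{78}a_lw_l\Bigr)\hs_i,
\ee
and then rewriting each $\hs_iw_l\hs_i$ as a linear combination of braid words of length at most $4$ containing at most one $s_k^{\pm1}$; this reduction uses the cancellations $\hs_is_i=s_i\hs_i=1$ when an $s_i^{\pm1}$ sits at the boundary of $w_l$, the far-commutativity of $\hs_i$ with $s_k^{\pm1}$ (since $|i-k|=2$), and the quadratic reduction of the resulting adjacent $\hs_i^2$ via the second form of \eqref{R1}. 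The second family, with $s_k\hs_js_k$ in place of $\hs_ks_j\hs_k$, is obtained from the first by applying the bar involution $\phi\colon s_m\mapsto\hs_m,\ t_l\mapsto t_l^{-1}$, which visibly preserves \eqref{R1} and \eqref{R2} and whose compatibility with \eqref{R3} is verified once and for all by a direct substitution into the explicit coefficients recorded in Appendix~\ref{sec.R123}; once established, this makes $\phi$ descend to an algebra involution of $C_n$ which sends the first family of relations to the second. Finally, the third and fourth families, in which the outer factor is $s_k^{\pm1}$ rather than $s_i^{\pm1}$, are produced by applying $\check{\,\cdot\,}$ in $B_4$ to the first two families; this automorphism swaps $s_1\leftrightarrow s_3$ and fixes $s_2$, hence interchanges the index patterns $(i,j,k)\leftrightarrow(k,j,i)$ and preserves the ideal $(R_1,R_2,R_3)$.

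The main obstacle is the bookkeeping in the reduction of the sandwiched sums $\hs_iw_l\hs_i$ and their analogues in the other families: for each of the $78$ monomials $w_l$ one must verify that the resulting expression of raw length at most $6$ really does collapse to a linear combination of braid words of length at most $4$ carrying at most a single $s_k^{\pm1}$. This is a finite and mechanical case analysis driven entirely by the one- and two-letter relations already established, and it is most efficient to carry out by a symbolic computation using the explicit coefficients of Appendix~\ref{sec.R123}.
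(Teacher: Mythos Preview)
Your treatment of the one- and two-letter relations, the $\varepsilon=+1$ case of the first three-letter family (which is literally \eqref{R3}), the $\varepsilon=-1$ case via sandwiching with $\hs_i$, and the passage to the third and fourth families via the inner automorphism $\check{\,\cdot\,}$ all agree with the paper's proof.

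The divergence is in how you derive the second family (the one with $s_k\hs_j s_k$). You invoke a bar involution $\phi\colon s_m\mapsto\hs_m,\ t_l\mapsto t_l^{-1}$ and claim it descends to $C_n$. While $\phi$ visibly preserves $(R_1)$ and $(R_2)$, the assertion that $\phi(R_3)\in(R_1,R_2,R_3)$ is not a matter of ``direct substitution into the coefficients'': applying $\phi$ produces a relation supported on the $78$ words $\phi(w_l)$, which are \emph{different} braid words from the $w_l$, and to conclude that this new relation lies in the ideal you would have to reduce it back to the original spanning set---a nontrivial computation in $C_4$, not a coefficient comparison. Verifying $\phi(R_3)$ on the $R$-matrices is not enough either, since that only places it in the kernel of the representation, not in $(R_1,R_2,R_3)$, and faithfulness on $C_4$ is not established.

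The paper avoids this entirely. It uses Lemma~\ref{equivR2} (with indices shifted to $j,k$) to write
\[
s_k\hs_j s_k \;=\; t_0t_1\,\hs_k s_j\hs_k \;+\; (\text{words of length}\le 3\text{ with at most one }s_k^{\pm1}),
\]
an identity that holds already in $(R_1,R_2)$. Multiplying on the left or right by $s_i^{\pm1}$ and substituting the first family for $s_i^{\pm1}\hs_k s_j\hs_k$ and $\hs_k s_j\hs_k s_i^{\pm1}$ gives the second family directly inside $(R_1,R_2,R_3)$, with no appeal to any extra symmetry of the ideal. This route is both shorter and logically cleaner than the bar-involution argument; I would recommend replacing your treatment of the second family with it.
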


\begin{proof}
{
The only non-obvious relations are the 3-letter relations. The first of these
four relations is \eqref{R3} in the case of the positive exponents. In the case
of the negative exponents, the first relation is obtained from \eqref{R3} by
writing $s_i^{-1} \cdot \eqref{R3} \cdot s_i^{-1}$.
The two versions of the second relation follow from the two versions of the first
relation and from Lemma~\ref{equivR2}:
\be
s_i^{\pm 1}(s_k \hs_j s_k) = t_0 t_1 \, s_i^{\pm 1} (\hs_k s_j \hs_k) \, + \,
\text{words with at most 4 letters and at most one } s_k^{\pm 1} \,,
\ee
and
\be
(s_k \hs_j s_k)s_i^{\pm 1} = t_0 t_1 \,  (\hs_k s_j \hs_k)s_i^{\pm 1} \,
+ \, \text{words with at most 4 letters and at most one } s_k^{\pm 1} \,.
\ee
Replacing $s_i^{\pm 1} (\hs_k s_j \hs_k)$ and $(\hs_k s_j \hs_k)s_i^{\pm 1}$
in the different versions of \eqref{R3} using these two identities, we get both
versions of the second 3 letter identity of Lemma~\ref{relations}. Finally, the
third and fourth 3-letter relations are obtained from the first two relations by
applying the inner automorphism defined through Equation~\eqref{innerB}.}
\end{proof}

\begin{lemma}
\label{threestrand}
In $C_3$, any word $\beta \in B_3$ is a linear
combination of at most 3 letter words, each of which: 
\begin{itemize}
\item
  either has at most one $s_2^{\pm 1}$,
\item
  or is $\hs_2 s_1 \hs_2$.
\end{itemize}
\end{lemma}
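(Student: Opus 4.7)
The plan is to argue by induction on the length $\ell$ of $\beta$ viewed as a word in the generators $s_1^{\pm 1}, s_2^{\pm 1}$. First, by repeatedly applying \eqref{R1} to any two consecutive letters with the same index, each such occurrence is expanded as a linear combination involving strictly fewer letters (or cancels via $s_i \hs_i = 1$), so without loss of generality $\beta$ is alternating between $s_1^{\pm 1}$ and $s_2^{\pm 1}$. The base cases are $\ell \leq 3$ and contain all the substantial work; the inductive step for $\ell \geq 4$ then reduces to the base case by processing three-letter windows.

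For the base cases, words of length $\leq 2$ and length-three words of the form $s_1^a s_2^b s_1^c$ already lie in the desired form. The remaining length-three alternating words have the shape $s_2^a s_1^b s_2^c$ and split into five sub-cases. First, the uniform-sign triples $s_2 s_1 s_2 = s_1 s_2 s_1$ and $\hs_2 \hs_1 \hs_2 = \hs_1 \hs_2 \hs_1$ are resolved by the braid relation. Second, $\hs_2 s_1 \hs_2$ is the distinguished special element in the conclusion of the lemma. Third, $s_2 \hs_1 s_2$ is rewritten directly by \eqref{modR2} of Lemma~\ref{equivR2}, whose right-hand side involves only 1-, 2-, and 3-letter words in the allowed form (namely $s_1^{\pm 1} s_2^{\pm 1} s_1^{\pm 1}$ and $\hs_2 s_1 \hs_2$). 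The remaining four mixed-sign triples $s_2 s_1 \hs_2$, $\hs_2 s_1 s_2$, $s_2 \hs_1 \hs_2$, $\hs_2 \hs_1 s_2$ satisfy a single linear equation produced by \eqref{R2} itself; three further independent relations among them must be derived by multiplying \eqref{R2} by $s_2^{\pm 1}$ on the left or right and simplifying the resulting four-letter expressions using \eqref{R1} and the braid relation. Solving this $4 \times 4$ linear system expresses each mixed-sign triple as a $\BQ(t_0,t_1)$-linear combination of words of length $\leq 2$ and a scalar multiple of $\hs_2 s_1 \hs_2$.

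For the inductive step with $\ell \geq 4$, inspect the rightmost three letters of $\beta$: they necessarily form an alternating triple $s_i^a s_j^b s_i^c$ with $\{i,j\} = \{1,2\}$. Apply the base-case reductions to this triple, rewriting $\beta = \gamma \cdot s_i^a s_j^b s_i^c$ as a linear combination of products $\gamma \cdot w$ with $w$ of length $\leq 3$ in the allowed form. When $w$ has length $<3$, the total length drops below $\ell$ and the induction hypothesis applies. When $w$ has length three, i.e.\ $w = s_1^a s_2^b s_1^c$ or $w = \hs_2 s_1 \hs_2$, either the first letter of $w$ matches the last letter of $\gamma$ in index (so the junction collapses via \eqref{R1}, strictly shortening the word), or the resulting alternating word has its three rightmost letters already in the allowed form, in which case the induction hypothesis applied to $\gamma$ (of length $\ell-3$) yields the conclusion after possible further junction collapses when reconcatenating. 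Termination is ensured by strict descent in the lexicographic pair (total length, number of rightmost triples not yet in allowed form).

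The main obstacle is the mixed-sign analysis of case (iv) of the base step: producing and solving the auxiliary four-equation linear system to express each of the four problematic triples in the allowed form involves careful algebraic manipulation, as multiplying \eqref{R2} by a generator introduces four-letter words that must themselves be reshaped using \eqref{R1}, the braid relation, and \eqref{modR2}. This bookkeeping, together with the verification that all coefficients lie in $\BQ(t_0,t_1)$, is carried out in Appendix~\ref{proofthreestrand}.
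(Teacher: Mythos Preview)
Your induction scheme has a genuine gap at length $\ell = 4$. Take $\beta = s_1 \hs_2 s_1 \hs_2$: its rightmost triple $\hs_2 s_1 \hs_2$ is already in the allowed form, so your procedure falls back to $\gamma = s_1$ (length~1, trivially allowed) and reconcatenates to recover $\beta$ unchanged --- no descent occurs in either coordinate of your lexicographic pair. The same loop arises for $\hs_2 s_1 \hs_2 s_1$ and $s_2 \hs_1 s_2 \hs_1$; for $\hs_1 s_2 \hs_1 s_2$ the rightmost triple $s_2\hs_1 s_2$ does reduce via \eqref{modR2}, but among the resulting terms is $\hs_1\cdot\hs_2 s_1\hs_2$, a length-4 word whose rightmost triple is again $\hs_2 s_1\hs_2$, and you loop once more. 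These four words are precisely the ones the paper flags as ``harder'' in Appendix~\ref{proofthreestrand}; each requires an explicit page-long expansion (e.g.\ of $s_1\hs_2 \cdot \eqref{R2}$, or $\eqref{R2}\cdot s_1 s_2$) that your three-letter base case plus a sliding-window argument cannot reach. The correct base case is $\ell \leq 4$, not $\ell \leq 3$, and the substance of the proof lies in those four length-4 computations.

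A secondary point: your proposed $4\times 4$ linear system for the mixed-sign triples $s_2 s_1 \hs_2$, $\hs_2 s_1 s_2$, $s_2 \hs_1 \hs_2$, $\hs_2 \hs_1 s_2$ is unnecessary. Each of these is handled in a single step by the braid-relation variants $(s_j^a s_i^a) s_j^b = s_i^b (s_j^a s_i^a)$ recorded in Lemma~\ref{relations}: for instance $s_2 s_1 \hs_2 = \hs_1 s_2 s_1$, which already has exactly one $s_2^{\pm 1}$. Only $s_2\hs_1 s_2$ (handled by \eqref{modR2}) and $\hs_2 s_1\hs_2$ (the distinguished element) are not disposed of this way.
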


We prove Lemma~\ref{threestrand} in Appendix~\ref{proofthreestrand}. This implies
Theorem~\ref{thm.2} in the three strand case $n=3$. Actually it implies an enhancement
of Equation~\eqref{CC} for $n=3$ given in the next corollary.

\begin{corollary}
\label{basisC3}
The following set is a $\BQ(t_0,t_1)$-linear basis of $C_3$:
\be
\label{bC3}
\begin{aligned}
\{1, s_1, \hs_1, s_2, \hs_2, s_2 s_1, s_2 \hs_1, \hs_2 s_1, \hs_2 \hs_1, s_1 s_2,
\hs_1 s_2, s_1 \hs_2, \hs_1 \hs_2, s_1 s_2 s_1, s_1 s_2 \hs_1, & \\
& \hspace{-10cm} s_1 \hs_2 \hs_1, \hs_1 \hs_2 s_1, \hs_1 \hs_2 \hs_1, \hs_1 s_2 \hs_1
\text{ (or $s_1 \hs_2 s_1$)}, \hs_2 s_1 \hs_2 \} \,,
\end{aligned}
\ee
hence the dimension of $C_3$ is $20$.
\end{corollary}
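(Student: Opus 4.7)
The plan is to match an upper bound $\dim C_3\leq 20$ coming from Lemma~\ref{threestrand} with a lower bound $\dim C_3\geq 20$ coming from the $R$-matrix representation.

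For the upper bound, I would first enumerate the words produced by Lemma~\ref{threestrand} modulo the one-letter reductions from \eqref{R1}. The at-most-three-letter words in $\{s_1^{\pm 1},s_2^{\pm 1}\}$ containing at most one letter from $\{s_2,\hs_2\}$, and not further reducible through \eqref{R1}, number exactly $1+4+8+8=21$: the unit; the one-letter words $s_1^{\pm 1},s_2^{\pm 1}$; the two-letter words $s_1^{\pm 1}s_2^{\pm 1}$ and $s_2^{\pm 1}s_1^{\pm 1}$; and the three-letter words $s_1^{\pm 1}s_2^{\pm 1}s_1^{\pm 1}$. Adjoining the exceptional $\hs_2 s_1 \hs_2$ produces a spanning set $S$ of size $22$, hence $\dim C_3\leq 22$.

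To trim $S$ down to the 20 elements of \eqref{bC3}, I would exhibit two linear relations. A first relation eliminating $\hs_1 s_2 s_1$ is immediate from \eqref{R2} at $(i,j)=(1,2)$, which solved for $\hs_1 s_2 s_1$ expresses it as a combination of $s_1 s_2 \hs_1$, $\hs_1 \hs_2 s_1$, $s_1 \hs_2 \hs_1$ and the eight two-letter words. A second relation accounting for the ``$\hs_1 s_2 \hs_1$ (or $s_1 \hs_2 s_1$)'' ambiguity is obtained by applying the inner automorphism $b\mapsto\check b$ of Lemma~\ref{innerautomorphism} — which for $n=3$ swaps $s_1\leftrightarrow s_2$ — to the first relation, and then using \eqref{modR2} of Lemma~\ref{equivR2} to rewrite the resulting three-letter words $s_2^{\pm 1}s_1^{\pm 1}s_2^{\pm 1}$ back into $S$. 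The summand $t_0t_1\,\hs_j s_i\hs_j = t_0t_1\,\hs_2 s_1 \hs_2$ appearing in \eqref{modR2} is precisely what forces the exceptional word to genuinely appear and what couples $\hs_1 s_2 \hs_1$ to $s_1 \hs_2 s_1$. The main obstacle in this step is organizing the rewrite so that the second relation is manifestly independent of the first; the computation is long but entirely mechanical.

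For the lower bound, I would use the representation $\rho\colon \BQ(t_0,t_1)[B_3]\to \End(V^{\otimes 3})$ induced by the $R$-matrix $R_{\LG}$ on $V=\BQ(t_0,t_1)^4$ given in Appendix~\ref{sec.Rmat}. Since $R_{\LG}$ satisfies \eqref{R1} and \eqref{R2} by Lemma~\ref{lem.R123}, $\rho$ factors through $C_3$. A direct computer calculation then shows that the twenty operators $\rho(w)\in \End(V^{\otimes 3})$ as $w$ ranges over \eqref{bC3} are linearly independent over $\BQ(t_0,t_1)$, giving $\dim C_3\geq 20$. Combined with the upper bound this forces $\dim C_3=20$ and shows that \eqref{bC3} is a basis.
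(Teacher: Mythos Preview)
Your proposal is correct and follows essentially the same route as the paper: Lemma~\ref{threestrand} yields a $22$-element spanning set, two relations cut it to $20$, and linear independence is checked via the representation $\rho_{\LG}:C_3\to\End(V^{\otimes 3})$. One small point: your mechanism for the second relation is more roundabout than necessary---the paper simply applies \eqref{modR2} directly (in its $i\leftrightarrow j$ form, valid by the remark after Lemma~\ref{innerautomorphism}) to write $s_1\hs_2 s_1$ as $t_0t_1\,\hs_1 s_2\hs_1$ plus words already in the reduced list, whereas first passing through the automorphism of \eqref{R2} and only then invoking \eqref{modR2} risks reproducing the original \eqref{R2} (since the automorphism applied to \eqref{R2} followed by braid relations gives back \eqref{R2} itself).
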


\begin{proof}
Using Lemma~\ref{threestrand}, the following set is a $\BQ(t_0,t_1)$-generating
set of $C_3$:
\be
\label{bC3b}
\begin{aligned}
\{1, s_1, \hs_1, s_2, \hs_2, s_2 s_1, s_2 \hs_1, \hs_2 s_1, \hs_2 \hs_1, s_1 s_2,
\hs_1 s_2, s_1 \hs_2, \hs_1 \hs_2, s_1 s_2 s_1, s_1 s_2 \hs_1, & \\
& \hspace{-10cm}
\hs_1 s_2 s_1,\hs_1 s_2 \hs_1, s_1 \hs_2 s_1, s_1 \hs_2 \hs_1, \hs_1 \hs_2 s_1,
\hs_1 \hs_2 \hs_1, \hs_2 s_1 \hs_2 \} \,.
\end{aligned}
\ee
Relation \eqref{R2} shows that the vector $\hs_1 s_2 s_1$ can be expressed
in terms of the other vectors of the family. Once that vector is removed,
\eqref{modR2} shows that
$s_1 \hs_2 s_1$ (or $\hs_1 s_2 \hs_1$) can also be removed. Thus we have
a generating set for $C_3$ with 20 elements.

Just as Marin--Wagner did in~\cite[Thm.1.1]{MW}, we can prove that this set
of 20 elements is linearly independent over $\BQ(t_0,t_1)$ by using the map
$\rho_{\LG}: C_3 \to \End(V^{\otimes 3})$ on a 4-dimensional vector space $V$
and check that the system of $4^6=4096$ linear equations in $20$ unknowns
with coefficients in the field $\BQ(t_0,t_1)$ has a unique solution, namely zero.
\end{proof}

\subsection{The \texorpdfstring{$n=4$}{n = 4} case}

%% does the proof for $n=4$ and $n \geq 5$ require Lemma~\ref{threestrand}?
We prove Theorem~\ref{thm.2} for words $\beta \in B_4$ inductively on
$\#_{3}(\beta)$, where $\#_{3}(\beta)$ is the sum of the number of $s_{3}$ and of
the number of $\hs_{3}$ that appear in the expression of $\beta$ in terms Artin
generators. Moreover, like in the three strand case, because of \eqref{R1}, we
need only to consider 
\be
\beta = s_{i_1}^{\varepsilon_1} s_{i_2}^{\varepsilon_2} s_{i_3}^{\varepsilon_3}
\ldots  \text{ , } \varepsilon_i = \pm 1.
\ee

The base case of the induction -- where the result is obviously true -- is when
$\#_{3}(\beta) \leq 1$.

Now consider a word $\b$ such that $\#_{3}(\beta) \geq 2$. We can write
$\beta = x s_{3}^{\pm 1} w s_{3}^{\pm 1} y$ with $w,y \in \la s_1, s_2\ra \cong B_3$
and $x\in B_4$. Since $w\in B_3$, we may express it in the generating set of
Lemma~\ref{threestrand}:
\begin{itemize}
\item
  either $w$ has at most one $s_2^{\pm 1}$,
\item
  or $w = \hs_2 s_1 \hs_2$.
\end{itemize}
\ovalbox{Case 1: $\#_{2}(w) = 0$.} In this case, $w$ is a power of $s_1$ and
$\b=xs_{3}^{\pm 1}ws_{3}^{\pm 1}y=xws_{3}^{\pm 1}s_{3}^{\pm 1}y$. The number of
$s_{3}^{\pm 1}$ used to write $\beta$ decreases thanks to \eqref{R2} or a
straightforward simplification of inverses. So we have the result in this case
inductively.
\\
\ovalbox{Case 2: $\#_{2}(w) = 1$.} We can write $w=s_1^\ve s_2^{\pm1} s_1^\g$ for
some $\varepsilon,\g\in\{-1,0,1\}$. Then $\beta = x' s_{3}^{\pm 1} s_{2}^{\pm 1}
s_{3}^{\pm 1} y'$ with $y' \in \la s_1, s_2\ra$ and $x'\in B_4$. We apply Lemma
\ref{threestrand} to $s_{3}^{\pm 1} s_{2}^{\pm 1} s_{3}^{\pm 1}\in \la s_2, s_3\ra
\cong B_3$, then modulo terms with fewer $s_{3}^{\pm 1}$,
\be
\label{eq.case2x'y''}
\begin{aligned}
    \beta = x' \hs_3 s_2 \hs_3 y'' && \mbox{for some $y'' \in \la s_1, s_2\ra$.}
\end{aligned}
\ee
If the left-most letter in $y''$ is $s_{1}^{\pm 1}$, then that letter commutes
with $\hs_3 s_2 \hs_3$ modulo words with fewer $s_{3}^{\pm 1}$ by applications of
\eqref{R3}. If the left-most letter is $s_{2}^{\pm 1}$, then we may apply Lemma
\ref{threestrand} once again. The sub-word $\hs_3 s_2 \hs_3 s_{2}^{\pm 1}$ reduces
in $\la s_2, s_3\ra \cong B_3$ to $\hs_3 s_2 \hs_3$ modulo words with fewer
$s_{3}^{\pm 1}$ and we have removed the left-most letter from $y'$.
Thus, an inductive argument on length of $y'$ shows
\be
\begin{aligned}
    \beta = x'' \hs_3 s_2 \hs_3 && \mbox{for some $x'' \in B_4$}
\end{aligned}
\ee
modulo words with fewer $s_{3}^{\pm 1}$.
Note that
\be
\begin{aligned}
    \#_3(x'' \hs_3 s_2 \hs_3)= 
    \#_3(x \hs_3 w \hs_3 y)&& \mbox{and}&&
    \#_3(x'')<\#_3(x \hs_3 w \hs_3 y).
\end{aligned}
\ee
Thus, using the inductive hypothesis on $x''$, it can be written as a linear
combination of words as described in Theorem~\ref{thm.2}. The different elements
in the sum can be considered independently. \\
\underline{Sub-case 2.1:} Suppose $x'' \in \la s_1, s_2\ra \cong B_3$. In this
instance, $x''$ can itself be reduced using Lemma~\ref{threestrand}. If
$x'' = s_{1}^{\varepsilon_1}s_{2}^{\varepsilon_2}s_{1}^{\varepsilon_3}$ for
$\varepsilon_i \in \{-1,0,1\}$, then modulo words with fewer $s_3^{\pm 1}$ and up
to a scalar,
\be
\begin{aligned}
\beta 
&= 
s_{1}^{\varepsilon_1}s_{2}^{\varepsilon_2}(s_{1}^{\varepsilon_3} \hs_3 s_2 \hs_3 )
\overset{\eqref{R3}}{=} 
s_{1}^{\varepsilon_1}(s_{2}^{\varepsilon_2} \hs_3 s_2 \hs_3) s_{1}^{\varepsilon_3}
\\
&\overset{\text{\ref{threestrand}}}{=} s_{1}^{\varepsilon_1}(\hs_3 s_2 \hs_3)
s_{1}^{\varepsilon_3} = s_{1}^{\varepsilon_1}(\hs_3 s_2 \hs_3 s_{1}^{\varepsilon_3})
\overset{\eqref{R3}}{=} s_{1}^{\varepsilon_1 + \varepsilon_3}\hs_3 s_2 \hs_3.
\end{aligned}
\ee
Now $\b$ is expressed in the desired form.
\\
If on the other hand $x'' = \hs_2 s_1\hs_2$, then modulo words with fewer
$s_3^{\pm 1}$ and up to a scalar,
\be
\begin{aligned}
  \beta = \hs_2 s_1 (\hs_2 \hs_3 s_2 \hs_3) \overset{\text{\ref{threestrand}}}{=}
  \hs_2 s_1 (\hs_3 s_2 \hs_3),
\end{aligned}
\ee
which now expresses $x''(\hs_3 s_2 \hs_3)$ in the form
$s_{1}^{0}s_{2}^{-1}s_{1}^{1}(\hs_3 s_2 \hs_3)$ considered previously. \\
\underline{Sub-case 2.2:} Suppose $\#_{3}(x'') = 1$. This time $x'' = u s_3^{\pm 1} v$,
with $u,v \in \la s_1, s_2\ra$.  Modulo terms with fewer $s_3^{\pm 1}$ and up to
a scalar we write:
\be
\begin{aligned}
  \beta 
  & = 
  u s_3^{\pm 1} (v \hs_3 s_2 \hs_3) \overset{\text{Sub-case 2.1}}{=} 
  u s_3^{\pm 1} s_{1}^{\varepsilon}\hs_3 s_2 \hs_3 
  = 
  u s_{1}^{\varepsilon} s_3^{\pm 1} \hs_3 s_2 \hs_3
\\ & = 
u s_{1}^{\varepsilon} (s_3^{\pm 1} \hs_3 s_2 \hs_3)
\overset{\text{\ref{threestrand}}}{=} u s_{1}^{\varepsilon} \hs_3 s_2 \hs_3
\overset{\text{Sub-case 2.1}}{=} 
 s_{1}^{\gamma} \hs_3 s_2 \hs_3.
\end{aligned}
\ee
The expression for $\b$ completes the proof in this sub-case.\\
\underline{Sub-case 2.3:} Next assume $x'' = s_1^{\delta} \hs_3 s_2 \hs_3$ with
$\delta \in\{-1, 0,1\}$. Again, modulo terms with fewer $s_3^{\pm 1}$ we have,
up to a scalar:
\be
\begin{aligned}
  \beta = s_1^{\delta} (\hs_3 s_2 \hs_3 \hs_3 s_2 \hs_3)
  \overset{\text{\ref{threestrand}}}{=} s_1^{\delta} \hs_3 s_2 \hs_3.
\end{aligned}
\ee
And we get the result in this case by structural induction. \\
\ovalbox{Case 3: $w = \hs_2 s_1 \hs_2$.} Then $\beta = x s_{3}^{\pm 1} \hs_2 s_1
\hs_2 s_{3}^{\pm 1} y$ with $y \in \la s_1, s_2\ra \cong B_3$. We need to be able
to reduce $s_{3}^{\pm 1} \hs_2 s_1 \hs_2 s_{3}^{\pm 1}$ in order to conclude in this
case. The following lemma is proven in Appendix~\ref{prooffourstrand} and does
just that.

\begin{lemma}
\label{fourstrand}
In $C_4$, the four words $s_{3}^{\pm 1} \hs_2 s_1
\hs_2 s_{3}^{\pm 1} \in B_4$ can be reduced to linear combinations of words of one
of the following types: 
\begin{itemize}
\item
  words with at most one $s_3^{\pm 1}$,
\item
  $\hs_3 s_2 \hs_3$, $s_1 \hs_3 s_2 \hs_3$ or $\hs_1 \hs_3 s_2 \hs_3$.
\end{itemize}
\end{lemma}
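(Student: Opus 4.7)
The plan is to show that each of the four words $s_3^{\epsilon_1} \hs_2 s_1 \hs_2 s_3^{\epsilon_2}$, with $\epsilon_1, \epsilon_2 \in \{\pm 1\}$, reduces in $C_4$ to the prescribed form. The starting observation is the identity $s_3 s_1 \hs_3 = s_1 s_3 \hs_3 = s_1$, which follows from the far-commutativity relation $s_1 s_3 = s_3 s_1$ in $B_4$. Inserting $s_3 \hs_3 = 1$ immediately to the right of the central $s_1$ yields the uniform factorization
\be
s_3^{\epsilon_1} \hs_2 s_1 \hs_2 s_3^{\epsilon_2} = (s_3^{\epsilon_1} \hs_2 s_3) \cdot s_1 \cdot (\hs_3 \hs_2 s_3^{\epsilon_2}),
\ee
valid for every choice of signs, with both outer factors lying inside the subgroup $\la s_2, s_3 \ra \cong B_3$ and an isolated $s_1$ between them.

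The next step will be to reduce each $B_3$-factor by Lemma~\ref{threestrand} applied inside $\la s_2, s_3 \ra$, where $s_2, s_3$ play the roles of the standard $B_3$-generators $s_1, s_2$. Each factor rewrites as a linear combination of $B_3$-words with at most one occurrence of $s_3^{\pm 1}$, plus a $\BQ(t_0, t_1)$-multiple of $\hs_3 s_2 \hs_3$. Writing $s_3^{\epsilon_1} \hs_2 s_3 = P_{\epsilon_1} + b_{\epsilon_1} \, \hs_3 s_2 \hs_3$ and $\hs_3 \hs_2 s_3^{\epsilon_2} = Q_{\epsilon_2} + d_{\epsilon_2} \, \hs_3 s_2 \hs_3$, this yields
\be
s_3^{\epsilon_1} \hs_2 s_1 \hs_2 s_3^{\epsilon_2} = P_{\epsilon_1} s_1 Q_{\epsilon_2} + b_{\epsilon_1} \hs_3 s_2 \hs_3 s_1 Q_{\epsilon_2} + d_{\epsilon_2} P_{\epsilon_1} s_1 \hs_3 s_2 \hs_3 + b_{\epsilon_1} d_{\epsilon_2} \hs_3 s_2 \hs_3 s_1 \hs_3 s_2 \hs_3.
\ee

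Each of the four resulting terms will then be handled separately. For $P_{\epsilon_1} s_1 Q_{\epsilon_2}$, I exploit far commutativity to gather the $s_3^{\pm 1}$ factors of $P_{\epsilon_1}$ and $Q_{\epsilon_2}$, and then reduce the resulting $\la s_2, s_3 \ra$-subword a second time by Lemma~\ref{threestrand}; sub-terms whose middle $\la s_1, s_2 \ra$-piece collapses (via Lemma~\ref{threestrand} inside that subgroup) to a word with at most one $s_2^{\pm 1}$ give words with at most one $s_3^{\pm 1}$, while sub-terms in which the middle collapses to $\hs_2 s_1 \hs_2$ are fed back into $(R_3)$ and absorbed into the $\BQ(t_0,t_1)$-span of $\{\hs_3 s_2 \hs_3, s_1 \hs_3 s_2 \hs_3, \hs_1 \hs_3 s_2 \hs_3\}$. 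For the mixed terms $\hs_3 s_2 \hs_3 s_1 Q_{\epsilon_2}$ and $P_{\epsilon_1} s_1 \hs_3 s_2 \hs_3$, I apply $(R_3)$ to move $s_1$ through $\hs_3 s_2 \hs_3$ modulo the 78 lower-order words in Equation~\eqref{w78}, then reduce the $\la s_1, s_2 \ra$-words on the opposite side by Lemma~\ref{threestrand}, pushing each piece either into the three-element target subspace or into words with at most one $s_3^{\pm 1}$. The self-interaction term $\hs_3 s_2 \hs_3 s_1 \hs_3 s_2 \hs_3$ is treated analogously, by applying $(R_3)$ to the first $s_1 \hs_3 s_2 \hs_3$ subword and collapsing the rest via Lemma~\ref{threestrand}.

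The main obstacle will be the coefficient bookkeeping: every application of $(R_1)$, $(R_2)$, $(R_3)$, or Lemma~\ref{threestrand} spawns many terms with rational coefficients in $\BQ(t_0, t_1)$, and one must verify that after all cancellations no surviving word with two $s_3^{\pm 1}$'s outside the three-element target subspace remains. This is a finite but lengthy check, best organized case by case for the four sign choices; the details are deferred to Appendix~\ref{prooffourstrand}.
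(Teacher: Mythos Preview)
Your factorization $s_3^{\epsilon_1}\hs_2 s_1 \hs_2 s_3^{\epsilon_2}=(s_3^{\epsilon_1}\hs_2 s_3)\,s_1\,(\hs_3\hs_2 s_3^{\epsilon_2})$ is correct and is a reasonable opening move, but the outline has a genuine gap: the procedure you describe is circular, and you never address how to break the circle.

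Concretely, take $\epsilon_1=1$, $\epsilon_2=-1$. Then $Q_{-1}=\hs_3\hs_2\hs_3=\hs_2\hs_3\hs_2$ (so $d_{-1}=0$) and, by Lemma~\ref{equivR2}, $s_3\hs_2 s_3$ contributes $b_1=t_0t_1$ in front of $\hs_3 s_2\hs_3$. Following your recipe, you must reduce $t_0t_1\,\hs_3 s_2\hs_3\, s_1\,\hs_2\hs_3\hs_2$ by applying \eqref{R3} to $\hs_3 s_2\hs_3 s_1$. Among the $78$ correction terms is $w_{39}=s_3\hs_2 s_1$, and $w_{39}\cdot Q_{-1}=s_3\hs_2 s_1\hs_2\hs_3\hs_2=(s_3\hs_2 s_1\hs_2\hs_3)\hs_2$: the very word you are trying to reduce reappears. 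Several other $w_l$'s (e.g.\ $w_{43}=\hs_3\hs_2 s_1$) and several terms of $P_1$ (those ending in $s_3^{\pm1}\hs_2$) do the same, producing both $s_3\hs_2 s_1\hs_2\hs_3$ and $\hs_3\hs_2 s_1\hs_2\hs_3$ on the right-hand side with nontrivial coefficients in $\BQ(t_0,t_1)$. The same phenomenon occurs for the $(1,1)$ and $(-1,-1)$ cases. So what you actually obtain is a linear system in the four unknown words, and to conclude you must show this system is invertible --- an explicit nonvanishing-of-coefficients statement, not mere bookkeeping.

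This is precisely the crux of the paper's proof, which is organized quite differently: it first reduces the two ``mixed'' words $s_3\hs_2 s_1\hs_2\hs_3$ and $\hs_3\hs_2 s_1\hs_2 s_3$ by a targeted chain of \eqref{R2}-manipulations that ultimately yields $(t_0+t_1)\cdot(\text{word})\equiv 0$ modulo Types~1 and~2, and then handles $s_3\hs_2 s_1\hs_2 s_3$ and $\hs_3\hs_2 s_1\hs_2\hs_3$ by writing down a $2\times 2$ system (one equation from \eqref{R2}, one from $\eqref{R3}\cdot\hs_2 s_3$) and checking that its determinant $-a_{78}+a_{39}-a_{75}-a_{53}-a_{47}$ is nonzero. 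Your write-up gives no indication that any such check is needed; saying the remaining work is ``a finite but lengthy check'' mischaracterizes the difficulty, since without the determinant computation the argument does not close.
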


Using Lemma~\ref{fourstrand}, $\beta$ reduces modulo words with fewer $s_3^{\pm 1}$
and up to a scalar to:
\be
\begin{aligned}
\beta \overset{\text{\ref{fourstrand}}}{=} x s_{1}^{\varepsilon} \hs_3 s_2 \hs_3 y.
\end{aligned}
\ee
Here we have $\b$ given in the form of Case 2 \eqref{eq.case2x'y''}. Case 3 is now
proven in the same way. This proves Theorem~\ref{thm.2} in the four strand case $n=4$.

\subsection{The \texorpdfstring{$n\geq5$}{n >= 5} case}

We suppose that Theorem~\ref{thm.2} is true for some $n \geq 4$. Let us prove
that then it is also true for $B_{n+1}$. Because of \eqref{R1}, we need only to
prove the result for $\beta \in B_{n+1}$ that can be written:
\be
\begin{aligned}
  \beta = s_{i_1}^{\varepsilon_1} s_{i_2}^{\varepsilon_2} s_{i_3}^{\varepsilon_3}
  \ldots  \text{ , } \varepsilon_i = \pm 1.
\end{aligned}
\ee
Like in the four strand case, we prove the result for words by structural
induction on $\#_{n}(\beta)$.

The result is clearly true when  $\#_{n}(\beta) \leq 1$. For $\beta$ such that
$\#_{n}(\beta) \geq 2$, we can write $\beta = x s_{n}^{\pm 1} w s_{n}^{\pm 1} y$
with $w,y \in \la s_1, s_2, \ldots , s_{n-1} \ra \cong B_n$. Using the induction
hypothesis, $w$ is a $\BQ(t_0,t_1)$-linear combination of words in the form
prescribed by Theorem \ref{thm.2}. In the following we assume that $w$ is given
in the form of one of these spanning words.\\
\ovalbox{Case 1: $\#_{n}(w) \leq 1$.} Like in the four strand case, we can write
$w = a s_{n-1}^{\varepsilon} b$ with $\varepsilon \in\{-1,0, 1\}$ and
$a,b \in \la s_1, ... , s_{n-2} \ra$. Therefore
\be
\begin{aligned}
  \beta = x' s_{n}^{\pm 1} s_{n-1}^{\varepsilon} s_{n}^{\pm 1} y' &&
  \mbox{with $y' \in \la s_1, s_2, \ldots , s_{n-1} \ra$ and $x'\in B_{n+1}$.} 
\end{aligned}
\ee
Again following the ideas of the $B_4$ case, this word reduces to match the
description of it given in Theorem~\ref{thm.2}. The proof is completely similar
because $s_1, s_2, \ldots , s_{n-3}$ commute with $s_{n-1}$ and $s_{n}$.  \\
\ovalbox{Case 2: $w = u \hs_{n-1} s_{n-2} \hs_{n-1}$, $u \in \la s_1, \ldots ,
  s_{n-3} \ra$.} In this case we can write:
\be
\begin{aligned}
  \beta = x' s_{n}^{\pm 1} \hs_{n-1} s_{n-2} \hs_{n-1} s_{n}^{\pm 1} y &&
  \mbox{for some $x' \in \la s_1, s_2, \ldots , s_{n-1} \ra$.}
\end{aligned}
\ee
The conclusion follows like in the $B_4$ case.
This concludes the proof of Theorem~\ref{thm.2}.
\qed

\begin{proof}(of Corollary~\ref{cor.1})
If a link $L$ is the closure of a braid $\beta \in B_n$, each element of the
generating set $\beta'$ of the linear combination obtained through the reduction
algorithm has a simpler closure than $L$, which inductively allows the computation
of $\LG$ on any link. Let $L'=\mathrm{cl}(\b')$ denote the braid closure of $\b'$ a
vector from the generating set. Indeed:

$\bullet$
  If $\#_{n-1}(\beta') = 0$, then $L'$ is a split link. Therefore it vanishes after
  evaluation through $\LG$ (or $V_1$), see Proposition~\ref{lem.split}.

$\bullet$
 If $\#_{n-1}(\beta') = 1$, then the index of $\b'$ is reduced by a Markov move of
  type II. 

$\bullet$
  If $\b'=\beta'' \hs_{n-1} s_{n-2} \hs_{n-1}$ for some $\beta'' \in B_{n-2}$, then
  $\mathrm{cl}(\beta'' \hs_{n-1} s_{n-2} \hs_{n-1})=\mathrm{cl}(\beta''
  \hs_{n-1}^2 s_{n-2} )$. After applying  \eqref{R1} the $\LG$ (or $V_1$) invariant of $L'$
  is expressed as a linear combination of invariants associated to
  $\mathrm{cl}(\beta'' s_{n-1}^{\pm1}s_{n-2} )$, a braid with a lower index by Markov
  II, and the split link given by the closure of $\beta'' s_{n-2}\in B_{n}$.
\end{proof}

\subsection{How \texorpdfstring{\eqref{R3}}{R3} was found}
\label{sub.how}

Once \eqref{R3} is found, Lemma~\ref{lem.R123} follows by a machine
computation. But this also gives a method to find an \eqref{R3}. 
Namely, \eqref{R3} is derived from the realization of
$s_1 \hs_3 s_2 \hs_3 - \hs_3 s_2 \hs_3 s_1 \in C_4$ as a $\BQ(t_0,t_1)$-linear
combination of the 175 words in a 
spanning set for a version of $C_4$ given in~\cite{MW}.
We can use the explicit $R$-matrices of $\LG$ or $V_1$ and the
corresponding $\BQ(t_0,t_1)$-linear map $C_4 \to \mathrm{End}(V^{\otimes 4})$
for a 4-dimensional vector space $V$ to reduce this to a linear algebra
question over $\BQ(t_0,t_1)$. This results in solving a system of $4^8=65536$
sparse linear equations in $175$ variables. 

To reduce the complexity of the task we used the $R$-matrix $R_V$ for $V_1$
rather than $R_{\LG}$ since the former appeared to have simpler coefficients.
Then, we solved the sparse linear system of equations 
for a sample of $20$ different values of the pair $(t_0,t_1)$. Doing this, we found
that only $78$ of the $175$ unknown variables are rational
functions with nonzero specializations, which reflects the sparsity of the system,
and moreover, the set of these $78$ variables was the same for all attempted
specializations. We thus reduced the system of unknowns from $175$ to the $78$
ones found above, and then by the use of a computer and some by-hand eliminations,
we found the unique solution given in the appendix.

This produced a potential $80$-term \eqref{R3} skein relation that we
then checked was satisfied for both $R$-matrices involved.

% We used \texttt{Mathematica 14}, \texttt{Matlab R2023a} and \texttt{maple 2024}
% in the process of finding and checking the validity of these coefficients.
A \texttt{Mathematica} program that includes the $R$-matrices of the $\LG$, $V_1$
and $\ADO_\omega$-polynomials and checks that they satisfy the skein relations is
given in~\cite{data}. 

%%%%%%%%%%%%%%%%%%%%%%%%%%%%%%%%%%%%%%%%%%%%%%%%%%%%%%%%%%%%%%%%%%%%%%%%%%%% 
%%%%%%%%%%%%%%%%%%%%%%%%%%%%%%%%%%%%%%%%%%%%%%%%%%%%%%%%%%%%%%%%%%%%%%%%%%%%

\section{Basics of the \texorpdfstring{$\LG$}{LG},
  \texorpdfstring{$V_1$}{V1} and
  \texorpdfstring{$\ADO_\omega$}{ADO} link invariants}
\label{sec.RT}

The three polynomial invariants of links that we study in our paper, namely
the $\LG$, $V_1$ and $\ADO_\omega$ come from the well-known Reshetikhin--Turaev
construction~\cite{RT,Tu:book} applied to enhanced matrices given below.

Instead of repeating definitions and notations from previous works and arguments
that we will not use, we comment briefly how these invariants are defined
following~\cite{GHKST} and references therein. 

A rigid $R$-matrix leads to invariants of long knots~\cite{Kas2}
and Nichols algebras with automorphisms (or suitable finite dimensional quotients
thereof) produce rigid $R$-matrices and hence invariants of long knots~\cite{GK:multi}.

All three polynomial invariants come from rigid $R$-matrices, and in fact from
enhanced ones, in the sense of Ohtsuki and Turaev~\cite{Oht, Tu:YB}.
For a precise definition see~\cite[Sec.2]{GHKST}, 
where an extension to tangles is given, and
a comparison of the various definitions is also discussed.

A common feature of the invariants of tangles given by the three polynomials
that we study is that they vanish when evaluated to closed links, since in a sense
all three are fermionic invariants.

To overcome this problem and define a nontrivial invariant of oriented links,
one cuts one component to obtain a $(1,1)$-tangle, and then shows that the
invariant of $(1,1)$-tangles is a scalar (so-called property $(P_1)$
in ~\cite[Sec.1.1]{GHKST}), and then that an invariant of a $(2,2)$-tangle
is unchanged if we close it on one or the other side (property $(P_2)$). 

All three tangle invariants satisfy properties $(P_1)$ and $(P_2)$ and consequently
give well-defined invariants of oriented links. Although these link invariants are
highly nontrivial, they do vanish on split links. 

Note that in dealing with the Links--Gould invariant of links, we stick to
conventions used by Ishii for example in \cite{Ish}. In doing so,
$\LG_L(p^{-2}, p^{2} q^{2})$ with $p = q^{\alpha}$ coincides with the Links-Gould
invariant from \cite{DWKL}.

Having discussed the basic properties of the three invariants of interest,
we give their $R$-matrices in Appendix~\ref{sec.Rmat}, and their
enhancements here:

\be
\label{3h}
\begin{aligned}
h_{\LG} & =\diag(t_0^{-1}, -t_1, -t_0^{-1}, t_1) \in \End(W) \\
h_{V}   & =\diag(-1, 1, 1, -1) \in \End(V) \\
h_{\ADO} & =\diag(t^2, \omega^2 t^2, \omega^4 t^2) \in \End(X) \,.
\end{aligned}
\ee

\begin{lemma}
$(R_{\LG},h_{\LG})$, $(R_{V},h_{V})$ and $(R_{\ADO},h_{\ADO})$ are enhanced
$R$-matrices and satisfy properties $(P_1)$ and $(P_2)$ of \cite[Sec.1.1]{GHKST}.
\end{lemma}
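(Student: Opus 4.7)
The plan is to verify each of the three triples $(R_{\LG}, h_{\LG})$, $(R_V, h_V)$, and $(R_{\ADO}, h_{\ADO})$ separately by direct computation on the explicit $R$-matrices of Appendix~\ref{sec.Rmat} and the diagonal enhancements~\eqref{3h}. Since the underlying vector spaces $W$, $V$, and $X$ have dimensions $4$, $4$, and $3$ respectively, every requirement reduces to a finite collection of polynomial identities in the relevant coefficient ring, all of which can be certified by a computer algebra check of the kind packaged in~\cite{data}.

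First I would confirm the enhanced $R$-matrix property itself. Following the formalism recalled in~\cite[Sec.2]{GHKST}, the three items to check for each pair $(R, h)$ are: (a) the Yang-Baxter equation $R_{12} R_{13} R_{23} = R_{23} R_{13} R_{12}$ on the triple tensor product; (b) the compatibility $(h \otimes h) R = R (h \otimes h)$; and (c) the partial-trace identities $\mathrm{tr}_2\bigl((1 \otimes h) R^{\pm 1}\bigr) = \mu_{\pm} \cdot 1$ defining the enhancement in the sense of Ohtsuki--Turaev, together with the normalization $\mu_+ \mu_- = 1$. For $(R_{\LG}, h_{\LG})$ these identities are classical, going back to~\cite{LG,Ish}; for $(R_V, h_V)$ they were verified in~\cite{GHKST}; and for $(R_{\ADO}, h_{\ADO})$ they constitute the Akutsu--Deguchi--Ohtsuki setup of~\cite{ADO}. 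The task here is therefore mostly one of bookkeeping, namely a sanity check that the matrices written in the appendix match the conventions of~\cite[Sec.2]{GHKST}.

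Next I would verify properties $(P_1)$ and $(P_2)$ of~\cite[Sec.1.1]{GHKST}. Property $(P_1)$ says that the invariant of a $(1,1)$-tangle is a scalar multiple of the identity, and $(P_2)$ says that the two closures of a $(2,2)$-tangle agree. Both are general consequences of the enhanced $R$-matrix framework: $(P_1)$ follows from item (c) above combined with the Markov-II invariance built into the Reshetikhin--Turaev functor, while $(P_2)$ amounts to a cyclicity identity for the weighted trace $\mathrm{tr}(h \, \cdot \,)$ that is implied in turn by items (b) and (c). The main obstacle, modest as it is, is uniformizing the three calculations: the three $R$-matrices live over different base rings and involve incompatible sign and grading conventions (the super sign for $\LG$ and $V_1$, the root-of-unity specialization for $\ADO_\omega$), so some care is needed when translating $(P_1)$ and $(P_2)$ into a common framework before the machine verification applies. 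Once this normalization is done, everything reduces to a handful of finite matrix identities in at most sixteen unknowns, and the verification is entirely routine.
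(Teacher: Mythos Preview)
Your treatment of the enhancement axioms is fine and matches the paper's one-line ``follows by an explicit computation''. The gap is in your handling of $(P_1)$ and $(P_2)$. Your claim that these are ``general consequences of the enhanced $R$-matrix framework'' is incorrect: the partial-trace axiom (c) only asserts that $\tr_2\bigl((1\otimes h)R^{\pm 1}\bigr)$ is a scalar, i.e.\ that a single Markov-II stabilization behaves well. Property $(P_1)$ demands that the endomorphism attached to \emph{every} $(1,1)$-tangle be a scalar, and there is no mechanism to propagate (c) from a single crossing to an arbitrary long knot without further input---typically Schur's lemma applied to a simple module. Likewise $(P_2)$, the ambidextrous property, is not a formal consequence of (b) and (c). Since both properties quantify over all tangles, they cannot be reduced to ``a handful of finite matrix identities in at most sixteen unknowns'' without a structural argument you have not supplied.

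The paper's proof handles this differently and does not attempt a uniform computational verification. For $(R_V,h_V)$ it cites the verification already done in~\cite[Sec.3]{GHKST}. For $(R_{\LG},h_{\LG})$ and $(R_{\ADO},h_{\ADO})$ it invokes representation theory: both $R$-matrices arise from simple objects in a ribbon category, so $(P_1)$ is Schur's lemma, and the objects are ambidextrous in the sense of~\cite{GPT}, which gives $(P_2)$. This categorical input is precisely what your proposed argument is missing.
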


The statement about enhancement follows by an explicit computation. 
Regarding properties $(P_1)$ and $(P_2)$, $(R_{V},h_{V})$ satisfies them as was
shown in \cite[Sec.3]{GHKST}. So do $(R_{\LG},h_{\LG})$ and $(R_{\ADO},h_{\ADO})$
since they are defined representation theoretically via a ribbon category
and the tangle invariants are colored by a simple ambidextrous object
in the sense of \cite{GPT}.

The link invariants can be computed in terms of a braid presentation 
$\beta\in B_n$ of an oriented link $L$ as stated in~\cite[Rem.2.3]{GHKST}
and for the convenience of the reader, we reproduce here:
\be
\label{FLcut}
\begin{aligned}
\RT_{L^{\mathrm{cut}}}&=
\tr_{2,\dots, n}\left((\mathrm{id}_V\otimes
h^{\otimes (n-1)})\circ \rho_R(\b) \right)\in \End(V)
\\
\la\RT_{L^{\mathrm{cut}}}\ra
&=
\frac{1}{\dim(V)}
\tr\left((\mathrm{id}_V\otimes
h^{\otimes (n-1)})\circ \rho_R(\b)\right)\,.
\end{aligned}
\ee

As mentioned before, all three link invariants thus defined have the following
common feature.

\begin{lemma}
\label{lem.split}
The $\LG$, $V_1$ and $\ADO_\omega$ polynomials vanish on split links, and are equal to $1$ on the unknot.
\end{lemma}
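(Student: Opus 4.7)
The plan is to establish the two assertions separately, both through a direct application of the trace formula~\eqref{FLcut}. The unknot case is a one-line computation, while the split-link case reduces, via the multiplicativity of the Reshetikhin--Turaev functor on disjoint unions, to the already-recalled fermionic vanishing of the closed-link invariant for each of $\LG$, $V_1$, and $\ADO_\omega$.

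For the unknot, realize it as the closure of the trivial element $1\in B_1$. Then $\rho_R(1)=\mathrm{id}_V$, the product $h^{\otimes(n-1)}$ is empty since $n=1$, and Equation~\eqref{FLcut} yields $\RT_{L^{\mathrm{cut}}}=\mathrm{id}_V$ and $\la\RT_{L^{\mathrm{cut}}}\ra=\frac{1}{\dim V}\tr(\mathrm{id}_V)=1$ for all three invariants.

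For a split link $L=L_1\sqcup L_2$, pick braid presentations $\beta_i\in B_{n_i}$ of $L_i$ and place their diagrams side by side, producing a braid $\beta\in B_{n_1+n_2}$ on disjoint sets of strands whose closure is $L$. Since the two blocks do not interact, $\rho_R(\beta)=\rho_R(\beta_1)\otimes\rho_R(\beta_2)$. Cut any component of $L_1$, say the first strand. The partial trace $\tr_{2,\dots,n_1+n_2}$ then factors as $\tr_{2,\dots,n_1}$ over the remaining strands of $L_1$ composed with the full trace over the $n_2$ strands of $L_2$, giving
\[
\RT_{L^{\mathrm{cut}}}=\RT_{L_1^{\mathrm{cut}}}\cdot \tr\bigl(h^{\otimes n_2}\rho_R(\beta_2)\bigr).
\]
The second factor is precisely the unnormalized closed-link Reshetikhin--Turaev invariant of $L_2$, which vanishes by the fermionic property of the three invariants recalled in the paragraphs preceding the lemma. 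Hence $\RT_{L^{\mathrm{cut}}}=0$, and consequently $\la\RT_{L^{\mathrm{cut}}}\ra=0$.

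There is no serious obstacle: the entire argument rests on (i) the tensor-product factorization of the braid representation on disjoint strands, a standard property of the Reshetikhin--Turaev construction, and (ii) the closed-link vanishing already stated for all three invariants in the paper. The only mild bookkeeping point is the implicit claim that a split link admits a braid presentation whose two pieces occupy disjoint sets of strands, which is immediate from isotoping the split diagram so $L_1$ and $L_2$ lie in disjoint three-balls before applying Alexander's theorem independently to each piece.
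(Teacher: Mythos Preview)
Your argument is correct and is essentially the same as the paper's: both treat the unknot via the trivial one-strand braid, and both handle split links by factoring the partial trace over a side-by-side braid presentation and observing that the full trace over the second block vanishes. The only cosmetic difference is that where you invoke the ``fermionic vanishing'' mentioned in the text, the paper's proof names the concrete reason, namely that $\tr(h_{\LG})=\tr(h_V)=\tr(h_{\ADO})=0$ (visible from~\eqref{3h}), which together with property $(P_1)$ forces $\tr(h^{\otimes n_2}\rho_R(\beta_2))=0$.
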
 

\begin{proof}
The vanishing on split links follows from the definition of the invariants
and the fact that the diagonal matrices $h_{\LG}$, $h_{V}$ and $h_{\ADO}$
have trace zero. The value of the unknot, whose long version is a single
vertical strand, is obvious. 
\end{proof}

%%%%%%%%%%%%%%%%%%%%%%%%%%%%%%%%%%%%%%%%%%%%%%%%%%%%%%%%%%%%%%%%%%%%%%%%%%%% 
%%%%%%%%%%%%%%%%%%%%%%%%%%%%%%%%%%%%%%%%%%%%%%%%%%%%%%%%%%%%%%%%%%%%%%%%%%%%

\appendix

\section{The \texorpdfstring{$R$}{R}-matrices for the Links--Gould,
  \texorpdfstring{$\ADO_\omega$}{ADO} and
  \texorpdfstring{$V_1$}{V1} polynomials}
\label{sec.Rmat}

In this appendix we write the three $R$-matrices that we need in the paper.

Since all three $R$-matrices are sparse, we present them in the following way.
Suppose $V$ is a vector space over a field $k$ with an ordered basis
$(v_1,\dots,v_n)$ and an $R$ matrix $R \in \End(V \otimes V)$.
Abbreviating $v_{ij}=v_i \otimes v_j$ for $i,j=1,\dots n$, the $n^2 \times n^2$
matrix $R$ can be presented
as an $n \times n$ matrix $\mathsf{R}:=(R(x_{ij}))_{1 \leq i,j \leq n}$ whose
entries are $k$-linear combinations of $v_{ij}$.

We now give the three $R$-matrices, beginning 
with the Links--Gould polynomial whose $R$-matrix is defined as follows.
Consider a 4-dimensional $\BQ(t_0,t_1)$-vector space $W$ with ordered basis
$(w_1,w_2,w_3,w_4)$.
Abbreviating $w_{ij}=w_i \otimes w_j$ for $i,j=1,\dots 4$, the $R$-matrix
$\mathsf{R}_{\LG}:=((R_{\LG})(w_{ij}))_{1 \leq i,j \leq 4}$ is given by
\begin{center}
\small
\resizebox{\textwidth}{!}{
$
  \mathsf{R}_{\LG}=\left(\begin{array}{@{}cccc}
  %i=1
  t_0 w_{11}
  &
  t_0^{1/2} w_{21}
  &
  t_0^{1/2} w_{31}
  &
   w_{41}
  \\ %i=2
  t_0^{1/2} w_{12} + (t_0-1)w_{21}
  &
  - w_{22}
  &
  (t_0t_1-1) w_{23}
  -t_0^{1/2}t_1^{1/2} w_{32}
  -t_0^{1/2}t_1^{1/2}Y w_{41}
  &
  t_1^{1/2} w_{42}
  \\ %i=3
  t_0^{1/2} w_{13} + (t_0-1)w_{31}
  &
  -t_0^{1/2}t_1^{1/2} w_{23}+Y w_{41}
  &
  - w_{33}
  &
  t_1^{1/2} w_{43}
  \\ %i=4
  w_{14}-t_0^{1/2}t_1^{1/2}Y w_{23}+Y w_{32}+Y^2 w_{41}
  &
  t_1^{1/2} w_{24}+(t_1-1) w_{42}
  &
  t_1^{1/2} w_{34}+(t_1-1)w_{43}
  &
  t_1 w_{44}
  \end{array}\right)
$}
\end{center} 
with $Y = \sqrt{(t_0-1)(1-t_1)}$. Actually, the entries in the above matrix are
in the quadratic extension $\BQ(t_0,t_1)[Y]$ of the field $\BQ(t_0,t_1)$ but this
plays no important role in our arguments. 

Next we give the $R$-matrix of the $V_1$-polynomial whose explicit computation was
discussed in~\cite{GK:multi} and further studied in \cite{GHKST}. We consider a
4-dimensional $\BQ(t_0,t_1)$-vector space $V$ with ordered
basis $(v_1,v_2,v_3,v_4)$.
As before, with $v_{ij}=v_i \otimes v_j$, the $R$-matrix 
$\mathsf{R}_{V,r}:=(R_{V,r}(v_{ij}))_{1 \leq i,j \leq 4}$ is given by
\begin{center}
\small
\resizebox{\textwidth}{!}{
$
\mathsf{R}_{V,r}=
\left(
\begin{array}{cccc}
% i=1
- v_{11} 
& 
-t_0 v_{21}
& 
-t_1 v_{31}
&
-t_0t_1 v_{41}
\\
% i=2
- v_{12}
+
(t_0-1) v_{21}
& 
t_0 v_{22}
& 
-rt_1 v_{32}
+
(t_0-1)t_1 v_{41}
&
rt_0t_1 v_{42}
\\
% i=3
- v_{13}
+
(t_1-1) v_{31}
&
-r^{-1}t_1^{-1} v_{23}
+
r^{-1}(1-t_0) v_{41}
&
t_1 v_{33}
&
r^{-1} v_{43}
\\
% i=4
\begin{bmatrix}
- v_{14}
+
(t_1^{-1}-1) v_{23}
\\
+
r(t_1-1) v_{32}
+
(t_0+t_1-2) v_{41}
\end{bmatrix}
&
r^{-1}t_1^{-1}v_{24}
+
(t_0-1) v_{42}
&
rt_1 v_{34}
+
(t_1-1) v_{43}
&
- v_{44}
\end{array}
\right).$}
\end{center} 
\vspace{0.25cm}

Taking $r=1$, the $R$-matrix $R_V:=R_{V,1}$ has an enhancement as was explained
in~\cite[Sec.3]{GHKST}. 

Lastly, we give the $R$-matrix used to define the $\ADO_\omega$ invariant of links.
With $\omega = e^{2\pi i/6}$, consider a 3-dimensional $\BQ(\omega,t)$-vector
space $X$ with an ordered basis $(x_0, x_1, x_2)$.
%Using the ordered basis
%\be
%(x_0 \otimes x_0, \dots, x_0 \otimes x_2, x_1 \otimes x_0, \dots, x_1 \otimes x_2,
%x_2 \otimes x_0, \dots, x_2 \otimes x_2)
%\ee
%for $X \otimes X$ we define $R_{\ADO} \in \End (X \otimes X)$ by
%
%$$
%R_{\ADO} = \left(
%\begin{array}{ccccccccc}
% t^2 & 0 & 0 & 0 & 0 & 0 & 0 & 0 & 0 \\
% 0 & t^2-1 & 0 & t & 0 & 0 & 0 & 0 & 0 \\
% 0 & 0 & \left(t^2-1\right) \left(1 - \omega^2 t^{-2} \right)& 0 & t-t^{-1} & 0
% & 1 & 0 & 0 \\
% 0 & t & 0 & 0 & 0 & 0 & 0 & 0 & 0 \\
% 0 & 0 & t^{-1}+\omega t & 0 & \omega^2 & 0 & 0 & 0 & 0 \\
% 0 & 0 & 0 & 0 & 0 & \omega^2 t^{-2} - 1 & 0 & -\omega t^{-1} & 0 \\
% 0 & 0 & 1 & 0 & 0 & 0 & 0 & 0 & 0 \\
% 0 & 0 & 0 & 0 & 0 & -\omega t^{-1} & 0 & 0 & 0 \\
% 0 & 0 & 0 & 0 & 0 & 0 & 0 & 0 & \omega^2 t^{-2} \\
%\end{array}
%\right).
%$$
With $x_{ij}=x_i \otimes x_j$, the $3 \times 3$ $R$-matrix
$\mathsf{R}_{\ADO}:=(R_{\ADO}(x_{ij}))_{1 \leq i,j \leq 3}$ is given by

\begin{equation*}
\mathsf{R}_{\ADO}=
\left(
\begin{array}{ccc}
% i=0
t^2 x_{00} 
& 
(t^2-1) x_{01}
+ 
t x_{10}
& 
(t^2-1)(1-\omega^2t^{-2}) x_{02}
+
(t^{-1}+\omega t) x_{11}
+
x_{20}
\\
% i=1
t x_{01}
& 
(t-t^{-1}) x_{02}
+
\omega^2 x_{11}
& 
(\omega^2 t^{-2}-1) x_{12}
+
-\omega t^{-1} x_{21}
\\
% i=2
x_{02}
&
-\omega t^{-1} x_{12}
&
\omega^2 t^{-2} x_{22}
\end{array}
\right).
\end{equation*}

%%%%%%%%%%%%%%%%%%%%%%%%%%%%%%%%%%%%%%%%%%%%%%%%%%%%%%%%%%%%%%%%%%%%%%%%%%%% 
%%%%%%%%%%%%%%%%%%%%%%%%%%%%%%%%%%%%%%%%%%%%%%%%%%%%%%%%%%%%%%%%%%%%%%%%%%%%

\section{The coefficients of the \texorpdfstring{\eqref{R3}}{R3} skein relation}
\label{sec.R123}

In this section we give the coefficients of the \eqref{R3}-skein relation. 

\begin{tiny}
\begin{align*}
a_1 & = -\frac{(t_1 - 1) (t_0 - 1) (-t_1 - t_0 - 2 t_1 t_0
- t_1^2 t_0 - t_1 t_0^2 + t_1^3 t_0^2 + t_1^2 t_0^3)}{
t_1 t_0 (t_1 + t_0) (t_1 t_0 - 1) (1 + t_1 t_0)} \,,
\\
a_2 & = -\frac{(t_1 - 1) (t_0 - 1) (t_1 + t_0 + 2 t_1 t_0)}{
(t_1 + t_0) (t_1 t_0 - 1) (1 + t_1 t_0)} \,,
\\
a_3 & = \frac{(t_1 - 1) (t_0 - 1) (-t_1 - t_0 - 2 t_1 t_0 - t_1^2 t_0
- t_1 t_0^2 + t_1^3 t_0^2 + t_1^2 t_0^3)}{t_1 t_0 (t_1 + t_0) (t_1 t_0 - 1)
(1 + t_1 t_0)} \,,
\\
a_4 & = \frac{(t_1 - 1) (t_0 - 1) (t_1 + t_0 + 2 t_1 t_0)}{(t_1 + t_0)
(t_1 t_0 - 1) (1 + t_1 t_0)} \,,
\\
a_5 & = \frac{1 + t_1 t_0 + t_1^2 t_0 + t_1 t_0^2}{t_1 (1 + t_1) t_0 (1 + t_0)
(1 + t_1 t_0)} \,,
\\
a_6 & = -\frac{t_1 + t_0 + t_1 t_0 + t_1^2 t_0^2}{t_1 (1 + t_1) t_0 (1 + t_0)
(1 + t_1 t_0)} \,,
\\
a_7 & = -\frac{(1 + t_1 t_0 + t_1^2 t_0 + t_1 t_0^2) (t_1 - t_1^2 + t_0 - t_1 t_0
+ t_1^2 t_0 - t_0^2 + t_1 t_0^2 + t_1^2 t_0^2)}{t_1 (1 + t_1) t_0 (1 + t_0)
(t_1 + t_0) (1 + t_1 t_0)} \,,
\\
a_8 & = \frac{(t_1 + t_0 + t_1 t_0 + t_1^2 t_0^2) (t_1 - t_1^2 + t_0 - t_1 t_0
+ t_1^2 t_0 - t_0^2 + t_1 t_0^2 + t_1^2 t_0^2)}{t_1 (1 + t_1) t_0 (1 + t_0)
(t_1 + t_0) (1 + t_1 t_0)} \,,
\\
a_9 & = -\frac{1 + t_1 t_0 + t_1^2 t_0 + t_1 t_0^2}{t_1 (1 + t_1) t_0 (1 + t_0)
(1 + t_1 t_0)} \,,
\\
a_{10} & = \frac{(1 + t_1 t_0 + t_1^2 t_0 + t_1 t_0^2) (t_1 - t_1^2 + t_0
- t_1 t_0 + t_1^2 t_0 - t_0^2 + t_1 t_0^2 + t_1^2 t_0^2)}{t_1 (1 + t_1) t_0 (1 + t_0)
(t_1 + t_0) (1 + t_1 t_0)} \,,
\\
a_{11} & = \frac{t_1 + t_0 + t_1 t_0 + t_1^2 t_0^2}{t_1 (1 + t_1) t_0 (1 + t_0)
(1 + t_1 t_0)} \,,
\\
a_{12} & = -\frac{(t_1 + t_0 + t_1 t_0 + t_1^2 t_0^2) (t_1 - t_1^2 + t_0 - t_1 t_0
+ t_1^2 t_0 - t_0^2 + t_1 t_0^2 + t_1^2 t_0^2)}{t_1 (1 + t_1) t_0 (1 + t_0)
(t_1 + t_0) (1 + t_1 t_0)} \,,
\\
a_{13} & = \frac{-t_1^2 - t_1 t_0 - t_1^2 t_0 - t_0^2 - t_1 t_0^2 + t_1^2 t_0^2}{t_1
(1 + t_1) t_0 (1 + t_0) (t_1 + t_0) (1 + t_1 t_0)} \,,
\\
a_{14} & = \frac{-t_1 + t_1^2 - t_0 + t_1 t_0 + t_1^2 t_0 + t_0^2 + t_1 t_0^2
- t_1^2 t_0^2 + 2 t_1^3 t_0^2 + t_1^4 t_0^2 + 2 t_1^2 t_0^3 + 2 t_1^3 t_0^3
+ t_1^4 t_0^3 + t_1^2 t_0^4 + t_1^3 t_0^4}{t_1 (1 + t_1) t_0 (1 + t_0) (t_1 + t_0)
(1 + t_1 t_0)} \,,
\\
a_{15} & = -\frac{-t_1^3 - t_1 t_0 - t_1^4 t_0 - t_1^2 t_0^2 - t_0^3 + t_1^4 t_0^3
- t_1 t_0^4 + t_1^3 t_0^4}{t_1 (1 + t_1) t_0 (1 + t_0) (t_1 + t_0) (1 + t_1 t_0)} \,,
\\
a_{16} & = -\frac{-t_1^2 + t_1^3 - t_1 t_0 + t_1^2 t_0 + t_1^3 t_0 - t_0^2
+ t_1 t_0^2 + 3 t_1^2 t_0^2 + t_1^4 t_0^2 + t_0^3 + t_1 t_0^3 + 2 t_1^3 t_0^3
+ t_1^4 t_0^3 + t_1^2 t_0^4 + t_1^3 t_0^4}{t_1 (1 + t_1) t_0 (1 + t_0)
(t_1 + t_0) (1 + t_1 t_0)} \,,
\\
a_{17} & = -\frac{1 + t_1 t_0 + t_1^2 t_0 + t_1 t_0^2}{(1 + t_1) (1 + t_0)
(t_1 + t_0) (1 + t_1 t_0)} \,,
\\
a_{18} & = -\frac{(t_1 + t_0 - 1) (1 + t_1 t_0 + t_1^2 t_0 + t_1 t_0^2)}{(1 + t_1)
(1 + t_0) (t_1 + t_0) (1 + t_1 t_0)} \,,
\\
a_{19} & = \frac{t_1 + t_0 + t_1 t_0 + t_1^2 t_0^2}{(1 + t_1) (1 + t_0) (t_1 + t_0)
(1 + t_1 t_0)} \,,
\\
a_{20} & = \frac{(t_1 + t_0 - 1) (t_1 + t_0 + t_1 t_0 + t_1^2 t_0^2)}{(1 + t_1)
(1 + t_0) (t_1 + t_0) (1 + t_1 t_0)} \,,
\\
a_{21} & = -\frac{t_1 + t_0 + t_1 t_0 + t_1^2 t_0^2}{t_1 (1 + t_1) t_0 (1 + t_0)
(t_1 + t_0) (1 + t_1 t_0)} \,,
\\
a_{22} & = -\frac{-t_1^2 - t_1 t_0 - t_1^4 t_0 - t_0^2 + t_1^2 t_0^2 - t_1^3 t_0^2
- t_1^2 t_0^3 + t_1^4 t_0^3 - t_1 t_0^4 + t_1^3 t_0^4}{t_1 (1 + t_1) t_0 (1 + t_0)
(t_1 + t_0) (1 + t_1 t_0)} \,,
\\
a_{23} & = \frac{t_1 - t_1^2 + t_0 - t_1 t_0 - t_1^4 t_0 - t_0^2 + t_1^2 t_0^2
- 2 t_1^3 t_0^2 - 2 t_1^2 t_0^3 + t_1^4 t_0^3 - t_1 t_0^4 + t_1^3 t_0^4}{t_1
(1 + t_1) t_0 (1 + t_0) (t_1 + t_0) (1 + t_1 t_0)} \,,
\\
a_{24} & = -\frac{(t_1 + t_0 - 1) (-t_1^2 - t_1 t_0 - t_1^2 t_0 - t_0^2 - t_1 t_0^2
+ t_1^2 t_0^2)}{t_1 (1 + t_1) t_0 (1 + t_0) (t_1 + t_0) (1 + t_1 t_0)} \,,
\\
a_{25} & = \frac{1 + t_1 t_0 + t_1^2 t_0 + t_1 t_0^2}{(1 + t_1) (1 + t_0) (t_1 + t_0)
(1 + t_1 t_0)} \,,
\\
a_{26} & = -\frac{t_1 + t_0 + t_1 t_0 + t_1^2 t_0^2}{(1 + t_1) (1 + t_0) (t_1 + t_0)
(1 + t_1 t_0)} \,,
\\
a_{27} & = \frac{(t_1 + t_0 - 1) (1 + t_1 t_0 + t_1^2 t_0 + t_1 t_0^2)}{(1 + t_1)
(1 + t_0) (t_1 + t_0) (1 + t_1 t_0)} \,,
\\
a_{28} & = -\frac{(t_1 + t_0 - 1) (t_1 + t_0 + t_1 t_0 + t_1^2 t_0^2)}{(1 + t_1)
(1 + t_0) (t_1 + t_0) (1 + t_1 t_0)} \,,
\\
a_{29} & = \frac{(1 + t_1) (1 + t_0)}{(t_1 + t_0) (t_1 t_0 - 1) (1 + t_1 t_0)} \,,
\\
a_{30} & = \frac{-1 - 2 t_1 + t_1^2 - 2 t_0 + t_1 t_0 - t_1^3 t_0 + t_0^2
- 2 t_1^2 t_0^2 + t_1^3 t_0^2 - t_1 t_0^3 + t_1^2 t_0^3}{(t_1 + t_0) (t_1 t_0 - 1)
(1 + t_1 t_0)} \,,
\\
a_{31} & = \frac{(1 + t_1 t_0 + t_1^2 t_0 + t_1 t_0^2) (-t_1^2 - t_1 t_0 - t_1^2 t_0
- t_0^2 - t_1 t_0^2 + t_1^2 t_0^2)}{t_1 (1 + t_1) t_0 (1 + t_0) (t_1 + t_0)
(t_1 t_0 - 1) (1 + t_1 t_0)} \,,
\\
a_{32} & = -\frac{(t_1 + t_0 + t_1 t_0 + t_1^2 t_0^2) (-t_1^2 - t_1 t_0 - t_1^2 t_0
- t_0^2 - t_1 t_0^2 + t_1^2 t_0^2)}{t_1 (1 + t_1) t_0 (1 + t_0) (t_1 + t_0)
(t_1 t_0 - 1) (1 + t_1 t_0)} \,,
\\
a_{33} & = -\frac{t_1 + t_0 + t_1 t_0 + t_1^2 t_0 + t_1^3 t_0 + t_1 t_0^2
+ 4 t_1^2 t_0^2 - t_1^4 t_0^2 + t_1 t_0^3 - t_1^3 t_0^3 + t_1^4 t_0^3 + t_1^5 t_0^3
- t_1^2 t_0^4 + t_1^3 t_0^4 + 2 t_1^4 t_0^4 + t_1^5 t_0^4 + t_1^3 t_0^5
+ t_1^4 t_0^5}{t_1 (1 + t_1) t_0 (1 + t_0) (t_1 + t_0) (t_1 t_0 - 1) (1 + t_1 t_0)}
\,,
\\
a_{34} & = \frac{t_1^2 + 3 t_1 t_0 + 2 t_1^2 t_0 - t_1^3 t_0 + t_0^2 + 2 t_1 t_0^2
+ t_1^3 t_0^2 - t_1 t_0^3 + t_1^2 t_0^3 + t_1^3 t_0^3 + t_1^5 t_0^3 + 2 t_1^4 t_0^4
+ t_1^5 t_0^4 + t_1^3 t_0^5 + t_1^4 t_0^5}{t_1 (1 + t_1) t_0 (1 + t_0) (t_1 + t_0)
(t_1 t_0 - 1) (1 + t_1 t_0)} \,,
\\
a_{35} & = \frac{(1 + t_1 t_0 + t_1^2 t_0 + t_1 t_0^2) (t_1^2 + t_1 t_0 + t_0^2
- t_1^2 t_0^2 + t_1^3 t_0^2 + t_1^2 t_0^3)}{t_1 (1 + t_1) t_0 (1 + t_0) (t_1 + t_0)
(t_1 t_0 - 1) (1 + t_1 t_0)} \,,
\\
a_{36} & = -\frac{(t_1 + t_0 + t_1 t_0 + t_1^2 t_0^2) (t_1^2 + t_1 t_0 + t_0^2
- t_1^2 t_0^2 + t_1^3 t_0^2 + t_1^2 t_0^3)}{t_1 (1 + t_1) t_0 (1 + t_0) (t_1 + t_0)
(t_1 t_0 - 1) (1 + t_1 t_0)} \,,
\\
a_{37} & = -\frac{t_1 + t_0 + t_1 t_0 + t_1^2 t_0^2}{t_1 t_0 (t_1 + t_0) (t_1 t_0 - 1)
(1 + t_1 t_0)} \,,
\\
a_{38} & = -\frac{(1 + t_1 t_0 + t_1^2 t_0 + t_1 t_0^2) (-t_1^2 - t_1 t_0 - t_1^2 t_0
- t_0^2 - t_1 t_0^2 + t_1^2 t_0^2)}{t_1 (1 + t_1) t_0 (1 + t_0) (t_1 + t_0)
(t_1 t_0 - 1) (1 + t_1 t_0)} \,,
\\
a_{39} & = \frac{t_1 + t_0 + t_1 t_0 + 2 t_1^2 t_0 + t_1^3 t_0 - t_1^4 t_0
+ 2 t_1 t_0^2 + 4 t_1^2 t_0^2 - t_1^3 t_0^2 + t_1^5 t_0^2 + t_1 t_0^3 - t_1^2 t_0^3
+ t_1^3 t_0^3 + 3 t_1^4 t_0^3 - t_1 t_0^4 + 3 t_1^3 t_0^4 - t_1^5 t_0^4 + t_1^2 t_0^5
- t_1^4 t_0^5}{t_1 (1 + t_1) t_0 (1 + t_0) (t_1 + t_0) (t_1 t_0 - 1) (1 + t_1 t_0)}
\,,
\\
a_{40} & = -\frac{(1 + t_1 t_0 + t_1^2 t_0 + t_1 t_0^2) (t_1^2 + t_1 t_0 + t_0^2
- t_1^2 t_0^2 + t_1^3 t_0^2 + t_1^2 t_0^3)}{t_1 (1 + t_1) t_0 (1 + t_0) (t_1 + t_0)
(t_1 t_0 - 1) (1 + t_1 t_0)} \,,
\\
a_{41} & = \frac{t_1^2 + 3 t_1 t_0 - t_1^2 t_0 + t_0^2 - t_1 t_0^2 + t_1^2 t_0^2
+ t_1^3 t_0^2 - t_1^4 t_0^2 + t_1^2 t_0^3 - 2 t_1^3 t_0^3 + t_1^4 t_0^3 - t_1^2 t_0^4
+ t_1^3 t_0^4}{t_1 t_0 (t_1 + t_0) (t_1 t_0 - 1) (1 + t_1 t_0)} \,,
\\
a_{42} & = \frac{(t_1 + t_0 + t_1 t_0 + t_1^2 t_0^2) (-t_1^2 - t_1 t_0 - t_1^2 t_0
- t_0^2 - t_1 t_0^2 + t_1^2 t_0^2)}{t_1 (1 + t_1) t_0 (1 + t_0) (t_1 + t_0)
(t_1 t_0 - 1) (1 + t_1 t_0)} \,,
\\
a_{43} & = -\frac{t_1^2 + 3 t_1 t_0 + 2 t_1^2 t_0 - t_1^3 t_0 + t_0^2 + 2 t_1 t_0^2
+ 2 t_1^3 t_0^2 + t_1^4 t_0^2 - t_1 t_0^3 + 2 t_1^2 t_0^3 + 3 t_1^3 t_0^3
+ t_1^2 t_0^4}{t_1 (1 + t_1) t_0 (1 + t_0) (t_1 + t_0) (t_1 t_0 - 1) (1 + t_1 t_0)}
\,,
\\
a_{44} & = \frac{(t_1 + t_0 + t_1 t_0 + t_1^2 t_0^2) (t_1^2 + t_1 t_0 + t_0^2
- t_1^2 t_0^2 + t_1^3 t_0^2 + t_1^2 t_0^3)}{t_1 (1 + t_1) t_0 (1 + t_0) (t_1 + t_0)
(t_1 t_0 - 1) (1 + t_1 t_0)} \,,
\\
a_{45} & = \frac{(1 + t_1 t_0 + t_1^2 t_0 + t_1 t_0^2) (t_1 + t_0 + t_1 t_0
+ t_1^2 t_0^2)}{(1 + t_1) (1 + t_0) (t_1 + t_0) (t_1 t_0 - 1) (1 + t_1 t_0)} \,,
\\
a_{46} & = -\frac{(t_1 + t_0 + t_1 t_0 + t_1^2 t_0^2)^2}{(1 + t_1) (1 + t_0)
(t_1 + t_0) (t_1 t_0 - 1) (1 + t_1 t_0)} \,,
\\
a_{47} & = \frac{1 + t_1 t_0 + t_1^2 t_0 + t_1 t_0^2}{(1 + t_1) (1 + t_0) (t_1 + t_0)
(1 + t_1 t_0)} \,,
\\
a_{48} & = -\frac{t_1 + t_0 + t_1 t_0 + t_1^2 t_0^2}{(1 + t_1) (1 + t_0) (t_1 + t_0)
(1 + t_1 t_0)} \,,
\\
a_{49} & = -\frac{(t_1 + t_0 + 2 t_1 t_0) (1 + t_1 t_0 + t_1^2 t_0 + t_1 t_0^2)}{
(1 + t_1) (1 + t_0) (t_1 + t_0) (t_1 t_0 - 1) (1 + t_1 t_0)} \,,
\\
a_{50} & = \frac{(t_1 + t_0 + 2 t_1 t_0) (t_1 + t_0 + t_1 t_0 + t_1^2 t_0^2)}{
(1 + t_1) (1 + t_0) (t_1 + t_0) (t_1 t_0 - 1) (1 + t_1 t_0)} \,,
\\
a_{51} & = \frac{(1 + t_1 t_0 + t_1^2 t_0 + t_1 t_0^2) (-t_1^2 - t_1 t_0 - t_1^2 t_0
- t_0^2 - t_1 t_0^2 + t_1^2 t_0^2)}{t_1 (1 + t_1) t_0 (1 + t_0) (t_1 + t_0)
(t_1 t_0 - 1) (1 + t_1 t_0)} \,,
\\
a_{52} & = -\frac{(t_1 + t_0 + t_1 t_0 + t_1^2 t_0^2) (-t_1^2 - t_1 t_0 - t_1^2 t_0
- t_0^2 - t_1 t_0^2 + t_1^2 t_0^2)}{t_1 (1 + t_1) t_0 (1 + t_0) (t_1 + t_0)
(t_1 t_0 - 1) (1 + t_1 t_0)} \,,
\\
a_{53} & = -\frac{(t_1 + t_0 + t_1 t_0) (1 + t_1 t_0 + t_1^2 t_0 + t_1 t_0^2)}{
t_1 (1 + t_1) t_0 (1 + t_0) (t_1 + t_0) (1 + t_1 t_0)} \,,
\\
a_{54} & = \frac{(t_1 + t_0 + t_1 t_0) (t_1 + t_0 + t_1 t_0 + t_1^2 t_0^2)}{
t_1 (1 + t_1) t_0 (1 + t_0) (t_1 + t_0) (1 + t_1 t_0)} \,,
\\
a_{55} & = \frac{(1 + t_1 t_0 + t_1^2 t_0 + t_1 t_0^2) (t_1^2 + t_1^2 t_0 + t_0^2
+ t_1 t_0^2)}{t_1 (1 + t_1) t_0 (1 + t_0) (t_1 + t_0) (t_1 t_0 - 1) (1 + t_1 t_0)} \,,
\\
a_{56} & = -\frac{(t_1^2 + t_1^2 t_0 + t_0^2 + t_1 t_0^2) (t_1 + t_0 + t_1 t_0
+ t_1^2 t_0^2)}{t_1 (1 + t_1) t_0 (1 + t_0) (t_1 + t_0) (t_1 t_0 - 1)
(1 + t_1 t_0)} \,,
\\
a_{57} & = -\frac{(1 + t_1 t_0 + t_1^2 t_0 + t_1 t_0^2) (t_1 + t_0 + t_1 t_0
+ t_1^2 t_0^2)}{(1 + t_1) (1 + t_0) (t_1 + t_0) (t_1 t_0 - 1) (1 + t_1 t_0)} \,, 
\\
a_{58} & = \frac{(t_1 + t_0 + t_1 t_0 + t_1^2 t_0^2)^2}{(1 + t_1) (1 + t_0)
(t_1 + t_0) (t_1 t_0 - 1) (1 + t_1 t_0)} \,,
\\
a_{59} & = -\frac{1 + t_1 t_0 + t_1^2 t_0 + t_1 t_0^2}{(1 + t_1) (1 + t_0)
(t_1 + t_0) (1 + t_1 t_0)} \,,
\\
a_{60} & = \frac{t_1 + t_0 + t_1 t_0 + t_1^2 t_0^2}{(1 + t_1) (1 + t_0)
(t_1 + t_0) (1 + t_1 t_0)} \,,
\\
a_{61} & = \frac{(t_1 + t_0 + 2 t_1 t_0) (1 + t_1 t_0 + t_1^2 t_0 + t_1 t_0^2)}{
(1 + t_1) (1 + t_0) (t_1 + t_0) (t_1 t_0 - 1) (1 + t_1 t_0)} \,,
\\
a_{62} & = -\frac{(t_1 + t_0 + 2 t_1 t_0) (t_1 + t_0 + t_1 t_0 + t_1^2 t_0^2)}{
(1 + t_1) (1 + t_0) (t_1 + t_0) (t_1 t_0 - 1) (1 + t_1 t_0)} \,,
\\
a_{63} & = -\frac{(1 + t_1 t_0 + t_1^2 t_0 + t_1 t_0^2) (-t_1^2 - t_1 t_0 - t_1^2 t_0
- t_0^2 - t_1 t_0^2 + t_1^2 t_0^2)}{t_1 (1 + t_1) t_0 (1 + t_0) (t_1 + t_0)
(t_1 t_0 - 1) (1 + t_1 t_0)} \,,
\\
a_{64} & = \frac{(t_1 + t_0 + t_1 t_0 + t_1^2 t_0^2) (-t_1^2 - t_1 t_0 - t_1^2 t_0
- t_0^2 - t_1 t_0^2 + t_1^2 t_0^2)}{t_1 (1 + t_1) t_0 (1 + t_0) (t_1 + t_0)
(t_1 t_0 - 1) (1 + t_1 t_0)} \,,
\\
a_{65} & = \frac{(t_1 + t_0 + t_1 t_0) (1 + t_1 t_0 + t_1^2 t_0 + t_1 t_0^2)}{
t_1 (1 + t_1) t_0 (1 + t_0) (t_1 + t_0) (1 + t_1 t_0)} \,,
\\
a_{66} & = -\frac{(t_1 + t_0 + t_1 t_0) (t_1 + t_0 + t_1 t_0 + t_1^2 t_0^2)}{
t_1 (1 + t_1) t_0 (1 + t_0) (t_1 + t_0) (1 + t_1 t_0)} \,,
\\
a_{67} & = -\frac{(1 + t_1 t_0 + t_1^2 t_0 + t_1 t_0^2) (t_1^2 + t_1^2 t_0 + t_0^2
+ t_1 t_0^2)}{t_1 (1 + t_1) t_0 (1 + t_0) (t_1 + t_0) (t_1 t_0 - 1) (1 + t_1 t_0)} \,,
\\
a_{68} & = \frac{(t_1^2 + t_1^2 t_0 + t_0^2 + t_1 t_0^2) (t_1 + t_0 + t_1 t_0
+ t_1^2 t_0^2)}{t_1 (1 + t_1) t_0 (1 + t_0) (t_1 + t_0) (t_1 t_0 - 1)
(1 + t_1 t_0)} \,,
\\
a_{69} & = \frac{1}{t_1 + t_0} \,,
\\
a_{70} & = \frac{t_1 t_0}{1 + t_1 t_0} \,,
\\
a_{71} & = \frac{(t_1 - 1) (1 + t_1) (t_0 - 1) (1 + t_0)}{(t_1 + t_0) (t_1 t_0 - 1)
(1 + t_1 t_0)} \,,
\\
a_{72} & = -\frac{1}{1 + t_1 t_0} \,,
\\
a_{73} & = \frac{t_1^2 + t_0^2 - 2}{(t_1 + t_0) (t_1 t_0 - 1) (1 + t_1 t_0)} \,,
\\
a_{74} & = -\frac{1}{t_1 + t_0} \,,
\\
a_{75} & = -\frac{t_1 t_0}{1 + t_1 t_0} \,,
\\
a_{76} & = -\frac{(t_1 - 1) (1 + t_1) (t_0 - 1) (1 + t_0)}{(t_1 + t_0)
(t_1 t_0 - 1) (1 + t_1 t_0)} \,,
\\
a_{77} & = \frac{1}{1 + t_1 t_0} \,,
\\
a_{78} & = -\frac{t_1^2 + t_0^2 - 2}{(t_1 + t_0) (t_1 t_0 - 1) (1 + t_1 t_0)} \,.
\end{align*}
\end{tiny}

%%%%%%%%%%%%%%%%%%%%%%%%%%%%%%%%%%%%%%%%%%%%%%%%%%%%%%%%%%%%%%%%%%%%%%%%%%%%
%%%%%%%%%%%%%%%%%%%%%%%%%%%%%%%%%%%%%%%%%%%%%%%%%%%%%%%%%%%%%%%%%%%%%%%%%%%%

\section{Proof of Lemma~\ref{threestrand}}
\label{proofthreestrand}

Here we prove Lemma~\ref{threestrand}. Note that it is enough to prove the result
when the word has at most 4 letters. The general case follows by induction on the
length of the word. Now, recall that for $\beta \in B_3$, because of \eqref{R1},
we need only to consider 
$$
\beta = s_1^{\varepsilon_1} s_2^{\varepsilon_2} s_1^{\varepsilon_3} \ldots \text{ or }
\beta = s_2^{\varepsilon_1} s_1^{\varepsilon_2} s_2^{\varepsilon_3} \ldots \text{ , }
\varepsilon_i = \pm 1.
$$

So, taking all reductions into account, it is enough to prove the statement for
the following list of $16+16+8 = 40$ words:
$$
s_1^{\pm 1} s_2^{\pm 1} s_1^{\pm 1}s_2^{\pm 1} \,;\,
s_2^{\pm 1} s_1^{\pm 1}s_2^{\pm 1} s_1^{\pm 1} \,;\, s_2^{\pm 1} s_1^{\pm 1}s_2^{\pm 1} .
$$
We will express these words as linear combinations of:
\begin{itemize}
\item words of length at most 3 with at most one $s_2^{\pm 1}$,
\item $\hs_2 s_1 \hs_2$.
\end{itemize}

To do that we can use the one and two letter relations from Lemmas~\ref{equivR2}
and \ref{relations}. Starting with the $32$ four letter words, let us see which
ones reduce obviously to a linear combination of words with at most one
$s_2^{\pm 1}$ and fewer letter words.\\
\begin{footnotesize}
\begin{minipage}{.45\textwidth}
\begin{align*}
&s_1 (s_2 s_1 s_2)\text{: reduces modulo braid relation and \eqref{R1},}\\
&(s_1 s_2 s_1) \hs_2\text{: reduces modulo braid relation,}\\
&(s_1 s_2 \hs_1) s_2\text{: reduces modulo braid relation and \eqref{R1},}\\
&(s_1 s_2 \hs_1) \hs_2\text{: reduces modulo braid relation,}\\
&s_1 (\hs_2 s_1 s_2)\text{: reduces modulo braid relation and \eqref{R1},}\\
&\text{\autour{$s_1 \hs_2 s_1 \hs_2$}: harder, must be studied separately,}\\
&(s_1 \hs_2 \hs_1) s_2\text{: reduces modulo braid relation and \eqref{R1},}\\
&(s_1 \hs_2 \hs_1) \hs_2\text{: reduces modulo braid relation,}\\
&(\hs_1 s_2 s_1) s_2\text{: reduces modulo braid relation,}\\
&(\hs_1 s_2 s_1) \hs_2\text{: reduces modulo braid relation and \eqref{R1},}\\
&\text{\autour{$\hs_1 s_2 \hs_1 s_2$}: harder, must be studied separately,}\\
&\hs_1 (s_2 \hs_1 \hs_2)\text{: reduces modulo braid relation and \eqref{R1},}\\
&\hs_1 (\hs_2 s_1 s_2)\text{: reduces modulo braid relation,}\\
&(\hs_1 \hs_2 s_1) \hs_2\text{: reduces modulo braid relation and \eqref{R1},}\\
&(\hs_1 \hs_2 \hs_1) s_2\text{: reduces modulo braid relation,}\\
&(\hs_1 \hs_2 \hs_1) \hs_2\text{: reduces modulo braid relation and \eqref{R1},}
\end{align*}
\end{minipage}
\quad
\begin{minipage}{.45\textwidth}
\begin{align*}
&(s_2 s_1 s_2) s_1\text{: reduces modulo braid relation and \eqref{R1},}\\
&s_2 (s_1 s_2 \hs_1)\text{: reduces modulo braid relation,}\\
&(s_2 s_1 \hs_2) s_1\text{: reduces modulo braid relation and \eqref{R1},}\\
&s_2 (s_1 \hs_2 \hs_1)\text{: reduces modulo braid relation,}\\
&s_2 (\hs_1 s_2 s_1)\text{: reduces modulo braid relation and \eqref{R1},}\\
&\text{\autour{$s_2 \hs_1 s_2 \hs_1$}: harder, must be studied separately,}\\
&(s_2 \hs_1 \hs_2) s_1\text{: reduces modulo braid relation and \eqref{R1},}\\
&(s_2 \hs_1 \hs_2) \hs_1\text{: reduces modulo braid relation,}\\
&(\hs_2 s_1 s_2) s_1\text{: reduces modulo braid relation,}\\
&(\hs_2 s_1 s_2) \hs_1\text{: reduces modulo braid relation and \eqref{R1},}\\
&\text{\autour{$\hs_2 s_1 \hs_2 s_1$}: harder, must be studied separately,}\\
&\hs_2 (s_1 \hs_2 \hs_1)\text{: reduces modulo braid relation and \eqref{R1},}\\
&\hs_2 (\hs_1 s_2 s_1)\text{: reduces modulo braid relation,}\\
&(\hs_2 \hs_1 s_2) \hs_1\text{: reduces modulo braid relation and \eqref{R1},}\\
&\hs_2 (\hs_1 \hs_2 s_1)\text{: reduces modulo braid relation,}\\
&(\hs_2 \hs_1 \hs_2) \hs_1\text{: reduces modulo braid relation and \eqref{R1}.}
\end{align*} 
\end{minipage}
\end{footnotesize}

For the 8 three letter words, let us similarly see which ones reduce directly.
\begin{align*}
s_2 s_1 s_2 &: \text{reduces modulo braid relation,}
\\
s_2 s_1 \hs_2 &: \text{reduces modulo braid relation,}
\\
s_2 \hs_1 s_2 &: \text{reduces using the equivalent version of
\eqref{R2} expressed in Lemma~\ref{equivR2},}
\\
s_2 \hs_1 \hs_2 &: \text{reduces modulo braid relation,}
\\
\hs_2 s_1 s_2 &: \text{reduces modulo braid relation,}
\\
\hs_2 s_1 \hs_2 &: \text{reduced already,}
\\
\hs_2 \hs_1 s_2 &: \text{reduces modulo braid relation,}
\\
\hs_2 \hs_1 \hs_2 &: \text{reduces modulo braid relation.}
\end{align*}

So 4 words remain to be studied more precisely:
$$
s_1 \hs_2 s_1 \hs_2 \,;\, \hs_1 s_2 \hs_1 s_2 \,;\, s_2 \hs_1 s_2 \hs_1
\,;\, \hs_2 s_1 \hs_2 s_1 .
$$
$\bullet$ \ovalbox{Case of $s_1 \hs_2 s_1 \hs_2$.} 
Let us compute $s_1 \hs_2 \cdot \eqref{R2}$ :
\begin{align*}
s_1 \hs_2 (s_1 \hs_2) & =  - s_1 \hs_2(\hs_1 s_2 s_1)
+ s_1 \hs_2(s_1 s_2 \hs_1) + s_1 \hs_2(s_1 s_2) - s_1 \hs_2(\hs_1 s_2) \\
& \phantom{sii} + s_1 \hs_2(\hs_1 \hs_2) - s_1 \hs_2(s_2 s_1)
+ s_1 \hs_2(s_2 \hs_1) + s_1 \hs_2(\hs_2 s_1) - s_1 \hs_2(\hs_2 \hs_1)\\
& \phantom{sii} + s_1 \hs_2(\hs_1 \hs_2 s_1) - s_1 \hs_2(s_1 \hs_2 \hs_1) \\
&  = - (s_1 s_1) \hs_2 (\hs_1 s_1) + s_1 (\hs_2 \hs_2) s_1 s_2
 + (s_1 s_1) s_2 \hs_1 - (s_1 s_1) \hs_2 \hs_1 \\
& \phantom{sii} + (s_1 \hs_1) \hs_2 \hs_1 - (s_1 s_1) + 1 \\
& \phantom{sii} + s_1 ((t_0^{-1} + t_1^{-1} -1) \, \hs_2 +(t_0^{-1}
+ t_1^{-1} - t_0^{-1} t_1^{-1}) \, 1 - t_0^{-1} t_1^{-1} \, s_2) \, s_1\\
&\phantom{sii} - s_1((t_0^{-1} + t_1^{-1} -1) \, \hs_2 +(t_0^{-1}
+ t_1^{-1} - t_0^{-1} t_1^{-1}) \, 1 - t_0^{-1} t_1^{-1} \, s_2)\, \hs_1\\
&\phantom{sii} + (s_1 \hs_1) \hs_2 (\hs_1 s_1) - s_1 (\hs_2 \hs_2) \hs_1 s_2 \\
&  =  - ((t_0 + t_1 -1) \, s_1 +(t_0 + t_1 - t_0 t_1) \, 1 - t_0 t_1 \, \hs_1)\,
\hs_2 \\
&\phantom{sii} +  s_1 ((t_0^{-1} + t_1^{-1} -1) \, \hs_2 +(t_0^{-1} + t_1^{-1}
- t_0^{-1} t_1^{-1}) \, 1 - t_0^{-1} t_1^{-1} \, s_2) \, s_1 s_2 \\
&\phantom{sii} + ((t_0 + t_1 -1) \, s_1 +(t_0 + t_1 - t_0 t_1) \,
1 - t_0 t_1 \, \hs_1)\,s_2 \hs_1\\
&\phantom{sii} - ((t_0 + t_1 -1) \, s_1 +(t_0 + t_1 - t_0 t_1) \,
1 - t_0 t_1 \, \hs_1)\, \hs_2 \hs_1 \\
&\phantom{sii} + \hs_2 \hs_1 
- ((t_0 + t_1 -1) \, s_1 +(t_0 + t_1 - t_0 t_1) \,
1 - t_0 t_1 \, \hs_1)\\
&\phantom{sii} + 1 + (t_0^{-1} + t_1^{-1} -1) \, s_1 \hs_2 s_1
+ (t_0^{-1} + t_1^{-1} - t_0^{-1} t_1^{-1}) \, s_1 s_1 
- t_0^{-1} t_1^{-1}\, s_1 s_2 s_1 \\
&\phantom{sii} - (t_0^{-1} + t_1^{-1} -1) \, s_1 \hs_2 \hs_1 - (t_0^{-1}
+ t_1^{-1} - t_0^{-1} t_1^{-1}) \, 1 + t_0^{-1} t_1^{-1} \, s_1 s_2 \hs_1 \\
&\phantom{sii} - \hs_2 
- s_1((t_0^{-1} + t_1^{-1} -1) \, \hs_2 +(t_0^{-1} + t_1^{-1}
- t_0^{-1} t_1^{-1}) \, 1 - t_0^{-1} t_1^{-1} \, s_2)\, \hs_1 s_2 \\
&  =  -(t_0 + t_1 -1) \, s_1 \hs_2 -(t_0 + t_1 -t_0 t_1) \, \hs_2
+ t_0 t_1 \, \hs_1 \hs_2  \\
&\phantom{sii} + (t_0^{-1} + t_1^{-1} -1) \, s_1 (\hs_2 s_1 s_2)
+ (t_0^{-1} + t_1^{-1} -t_0^{-1} t_1^{-1}) \, (s_1 s_1) s_2 - t_0^{-1} t_1^{-1}
\, s_1(s_2 s_1 s_2) \\
&\phantom{sii} + (t_0 + t_1 - 1) \, s_1 s_2 \hs_1 + (t_0 + t_1 - t_0 t_1)
\, s_2 \hs_1 - t_0 t_1 \, \hs_1 s_2 \hs_1 \\
&\phantom{sii} - (t_0 + t_1 - 1) \, s_1 \hs_2 \hs_1 - (t_0 + t_1 - t_0 t_1)
\, \hs_2 \hs_1 + t_0 t_1 \, \hs_1 \hs_2 \hs_1 
+ \hs_2 \hs_1\\
&\phantom{sii} - (t_0 + t_1 - 1) \, s_1 - (t_0 + t_1 - t_0 t_1)
\, 1 + t_0 t_1 \, \hs_1 + 1  
+ (t_0^{-1} + t_1^{-1} -1) \, s_1 \hs_2 s_1  \\
&\phantom{sii} + (t_0^{-1} + t_1^{-1} - t_0^{-1} t_1^{-1}) ((t_0 + t_1 -1)
\, s_1 +(t_0 + t_1 - t_0 t_1) \, 1 - t_0 t_1 \, \hs_1) \\
&\phantom{sii} - t_0^{-1} t_1^{-1} \, s_1 s_2 s_1 - (t_0^{-1} + t_1^{-1} - 1)
\, s_1 \hs_2 \hs_1 - (t_0^{-1} + t_1^{-1} - t_0^{-1} t_1^{-1}) \, 1 \\
&\phantom{sii} + t_0^{-1} t_1^{-1} \, s_1 s_2 \hs_1 + \hs_2
- (t_0^{-1} + t_1^{-1} - 1) \,s_1 (\hs_2 \hs_1 s_2)\\
&\phantom{sii} - (t_0^{-1} + t_1^{-1} - t_0^{-1} t_1^{-1}) \,s_2
+ t_0^{-1} t_1^{-1} \, (s_1 s_2 \hs_1) s_2 \,.
\end{align*}
If we then expand all the terms in the sum and group together those that are
multiples of the same braid word, we find that:
\begin{align*}
s_1 \hs_2 (s_1 \hs_2)
& = (t_0 + t_1 - t_0 t_1 - 1) (t_0^{-1} + t_1^{-1} - t_0^{-1} t_1^{-1} -1) \, 1 \\
& + (t_0 + t_1 -1) (t_0^{-1} + t_1^{-1} - t_0^{-1} t_1^{-1} -1) \, s_1
+ (t_0 + t_1 - t_0 t_1 -1) (t_0^{-1} + t_1^{-1} - t_0^{-1} t_1^{-1}) \, s_2 \\
& - (t_0 + t_1 - t_0 t_1 - 1) \, \hs_1 - (t_0 + t_1 - t_0 t_1 - 1) \, \hs_2 \\
& + (t_0 + t_1 -1) (t_0^{-1} + t_1^{-1} - t_0^{-1} t_1^{-1}) \, s_1 s_2
- (t_0 + t_1 -1) \, s_1 \hs_2  - (t_0 + t_1 -1) \, \hs_1 s_2 \\
& + t_0 t_1 \, \hs_1 \hs_2 - (t_0^{-1} + t_1^{-1} -1) \, s_2 s_1
+ (t_0^{-1} + t_1^{-1}) (t_0 + t_1 - t_0 t_1) \, s_2 \hs_1\\
& + (t_0^{-1} + t_1^{-1} -1) \, \hs_2 s_1 + (t_0 + t_1 - t_0 t_1^{-1}
- t_0^{-1} t_1 -1) \, \hs_2 \hs_1 - (t_0^{-1} + t_1^{-1}) \, s_1 s_2 s_1 \\
& + (t_0 + t_1) (t_0^{-1} + t_1^{-1}) \, s_1 s_2 \hs_1
+ (t_0^{-1} + t_1^{-1} -1) \, s_1 \hs_2 s_1
- (t_0^{-1} t_1 + t_0 t_1^{-1} +1) \, s_1 \hs_2 \hs_1\\
& + \hs_1 s_2 s_1 - (t_0 + t_1) \, \hs_1 s_2 \hs_1
+ (t_0 + t_1) \, \hs_1 \hs_2 \hs_1  - \hs_2 s_1 \hs_2 \,.
\end{align*}
$\bullet$ \ovalbox{Case of $\hs_1 s_2 \hs_1 s_2$.} Like in the previous case,
one can compute $\eqref{R2} \cdot s_1 s_2$ to find that:
\begin{align*}
  \hs_1 s_2 \hs_1 s_2& = (t_0 + t_1 - t_0 t_1 - 1) (t_0^{-1} + t_1^{-1}
  - t_0^{-1} t_1^{-1} -1) \, 1 + (1 - t_0^{-1} t_1^{-1})
  (t_0 + t_1 -t_0 t_1 - 1) \, s_1\\
  & + (t_0^{-1} + t_1^{-1} -2) (t_0 + t_1 - t_0 t_1 -1) \, \hs_1
  + (t_0^{-1} + t_1^{-1} - t_0^{-1} t_1^{-1} -1) (t_0 + t_1 -1) \, s_2 \\
  & - (t_0 + t_1 - t_0 t_1 -1) \, \hs_2 + (1 - t_0^{-1} t_1^{-1} )
  (t_0 + t_1 -1) \, s_1 s_2 + (1 - t_0 t_1) \, s_1 \hs_2 \\
  & + (t_0^{-1} + t_1^{-1} -2) (t_0 + t_1 -1) \, \hs_1 s_2
  - (t_0+t_1-2t_0 t_1) \, \hs_1 \hs_2 + (t_0 + t_1 - t_0 t_1) \, \hs_2 s_1 \\
  & - (t_0 + t_1 - t_0 t_1 -1) \hs_2 \hs_1 - s_1 s_2 s_1
  + (t_0 + t_1) \,  s_1 s_2 \hs_1  - (t_0 + t_1 -1) \,  s_1 \hs_2 \hs_1 \\
  & + \hs_1 s_2 s_1 - \hs_1 s_2 \hs_1 + t_0 t_1  \, \hs_1 \hs_2 \hs_1
  - t_0 t_1  \, \hs_2 s_1 \hs_2 \,.
\end{align*}
$\bullet$ \ovalbox{Case of $s_2 \hs_1 s_2 \hs_1$.} If we simplify
$s_2 \hs_1 \cdot \eqref{R2}$ we get the following expression:
\begin{align*}
  s_2 \hs_1 s_2 \hs_1& =  (t_0 + t_1 - t_0 t_1 - 1) (t_0^{-1} + t_1^{-1}
  - t_0^{-1} t_1^{-1} -1) \, 1 + (1 - t_0^{-1} t_1^{-1})
  (t_0 + t_1 -t_0 t_1 - 1) \, s_1\\
  & + (t_0^{-1} + t_1^{-1} -2) (t_0 + t_1 - t_0 t_1 -1) \, \hs_1
  + (t_0^{-1} + t_1^{-1} - t_0^{-1} t_1^{-1} -1) (t_0 + t_1 -1) \, s_2 \\
  & - (t_0 + t_1 - t_0 t_1 -1) \, \hs_2 - (t_0^{-1} + t_1^{-1}-1)
  (t_0 + t_1) \, s_1 s_2 \\
  & + (t_0^{-1} + t_1^{-1} + 1) (t_0 + t_1 -t_0 t_1) \, s_1 \hs_2
  + (t_0^{-1} + t_1^{-1} -1) (t_0 + t_1) \, \hs_1 s_2\\
  & + (1 - (t_0^{-1} + t_1^{-1} +1) (t_0 + t_1 - t_0 t_1)) \, \hs_1 \hs_2
  + (1 + t_0^{-1} t_1 + t_0 t_1^{-1} - t_0^{-1} - t_1^{-1}
+ t_0^{-1} t_1^{-1}) \, s_2 s_1 \\
& - (t_0 + t_0^{-1} + t_1 + t_1^{-1} -2) \, s_2 \hs_1
- (1 + t_0^{-1} t_1 + t_0 t_1^{-1} - t_0 - t_1 + t_0 t_1) \, \hs_2 s_1\\
& + t_0 t_1 (1 + (t_0^{-1} + t_1^{-1} -1)^2) \hs_2 \hs_1 - s_1 s_2 s_1
+ (1 - (t_0^{-1} + t_1^{-1} -1) (t_0 + t_1)) \,  s_1 s_2 \hs_1 \\
& + (t_0^{-1} + t_1^{-1} -1) (t_0 + t_1) \,  s_1 \hs_2 \hs_1
+ (t_0^{-1} + t_1^{-1}) (t_0 + t_1) \, \hs_1 s_2 s_1 - \hs_1 s_2 \hs_1\\
& - (t_0^{-1} t_1 + t_0 t_1^{-1} +1) \hs_1 \hs_2 s_1 + t_0 t_1  \,
\hs_1 \hs_2 \hs_1 - t_0 t_1  \, \hs_2 s_1 \hs_2  \,.
\end{align*}
$\bullet$ \ovalbox{Case of $\hs_2 s_1 \hs_2 s_1$.} Finally, if we write
$\hs_2 s_1 \cdot \eqref{R2}$ we get:
\begin{align*}
  \hs_2 s_1 \hs_2 s_1& =  (t_0 + t_1 - t_0 t_1 - 1) (t_0^{-1} + t_1^{-1}
  - t_0^{-1} t_1^{-1} -1) \, 1 + (t_0 + t_1 - 1) (t_0^{-1} + t_1^{-1}
  - t_0^{-1} t_1^{-1} -1) \, s_1\\
  & - (t_0 + t_1 - t_0 t_1 -1) \, \hs_1 - (t_0^{-1} + t_1^{-1}
  - t_0^{-1} t_1^{-1} -1) \, s_2\\
  & + (t_0 + t_1 - t_0 t_1 -1) (t_0^{-1} + t_1^{-1} -1) \, \hs_2
  +  s_1 s_2 - s_1 \hs_2 + \hs_1 \hs_2\\
  & - (t_0^{-1} + t_1^{-1} - t_0^{-1} t_1^{-1}) \, s_2 s_1 + s_2 \hs_1
  + (t_0 + t_1 -1) (t_0^{-1} + t_1^{-1} - 1) \, \hs_2 s_1 \\
  & - (t_0 + t_1 - t_0 t_1) \hs_2 \hs_1  + s_1 s_2 \hs_1 - s_1 \hs_2 s_1
  + \hs_1 \hs_2 s_1 -   \hs_2 s_1 \hs_2 \,.
\end{align*}

This ends the proof of Lemma~\ref{threestrand}.

%%%%%%%%%%%%%%%%%%%%%%%%%%%%%%%%%%%%%%%%%%%%%%%%%%%%%%%%%%%%%%%%%%%%%%%%%%%%
%%%%%%%%%%%%%%%%%%%%%%%%%%%%%%%%%%%%%%%%%%%%%%%%%%%%%%%%%%%%%%%%%%%%%%%%%%%%

\section{Proof of Lemma~\ref{fourstrand}}
\label{prooffourstrand}

Let us show that the following four words 
$$
s_{3} \hs_2 s_1 \hs_2 s_{3}\text{ , }s_{3} \hs_2 s_1 \hs_2 \hs_{3}\text{ , }
\hs_{3} \hs_2 s_1 \hs_2 s_{3}\text{ and } \hs_{3} \hs_2 s_1 \hs_2 \hs_{3} \in B_4
$$
can be reduced to linear combinations of words of one of the following types:
\begin{itemize}
\item
  \text{[Type 1]} words with at most one $s_3^{\pm 1}$, 
\item \text{[Type 2]} $\hs_3 s_2 \hs_3$, $s_1 \hs_3 s_2 \hs_3$ or
  $\hs_1 \hs_3 s_2 \hs_3$.  
\end{itemize}
To do so we write some equations that will be useful eventually.

\begin{proposition}
The following equations are true in $C_4$, modulo terms of Types 1 and 2: 
\be
\label{eq1}
s_3 \hs_2 s_1 \hs_2 s_3 = \hs_3 \hs_2 s_1 \hs_2 s_3 - s_2 s_3 \hs_2 s_1
\hs_2 s_3 + s_2 \hs_3 \hs_2 s_1 \hs_2 s_3,
\ee
\be\label{eq2}
\hs_3 \hs_2 s_1 \hs_2\hs_3 = s_2 s_3 \hs_2 s_1 \hs_2 \hs_3 - s_2 \hs_3
\hs_2 s_1 \hs_2 \hs_3 + s_3 \hs_2 s_1 \hs_2 \hs_3.
\ee
\end{proposition}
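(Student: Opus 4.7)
Both \eqref{eq1} and \eqref{eq2} can be rearranged and factored (pulling $\hs_2 s_1 \hs_2 s_3^{\varepsilon}$ to the right) into the common template
\begin{equation*}
(1 + s_2)(s_3 - \hs_3)\,\hs_2 s_1 \hs_2 s_3^{\varepsilon} \equiv 0 \pmod{\text{Types 1 and 2}}, \qquad \varepsilon = \pm 1,
\end{equation*}
where $\varepsilon = +1$ gives \eqref{eq1} and $\varepsilon = -1$ gives $-$\eqref{eq2}. My plan is to prove both signs simultaneously by a direct calculation starting from Ishii's relation \eqref{R2} at $(i,j) = (2,3)$, right-multiplied by $\hs_2 s_1 \hs_2 s_3^{\varepsilon}$.

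After this right-multiplication, the left-hand side of \eqref{R2} simplifies using $s_2 \hs_2 = 1$ to collapse adjacent inverses and using $[s_1, s_3^{\pm 1}] = 0$ to reorder, becoming
\begin{equation*}
\hs_2 s_1 (s_3 - \hs_3)\hs_2 s_3^{\varepsilon} - s_2(s_3 - \hs_3)\hs_2^2 s_1 \hs_2 s_3^{\varepsilon}.
\end{equation*}
I would then expand the central $\hs_2^2$ in the second summand via \eqref{R1}; this produces the target expression $s_2(s_3 - \hs_3)\hs_2 s_1 \hs_2 s_3^{\varepsilon}$ with a nonzero scalar coefficient, together with correction terms of the form $s_2(s_3 - \hs_3)s_1 \hs_2 s_3^{\varepsilon}$ and $s_2(s_3 - \hs_3)s_2 s_1 \hs_2 s_3^{\varepsilon}$. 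Each correction term contains, after commuting $s_1^{\pm1}$ past nearby $s_3^{\pm1}$, a subword that lies in the $B_3$-subalgebra $\langle s_2, s_3\rangle$; Lemma \ref{threestrand} applied inside $\langle s_2, s_3\rangle$ reduces this subword to a linear combination of words with at most one $s_3^{\pm1}$, plus possibly a multiple of $\hs_3 s_2 \hs_3$. In the first case we land in Type 1, and in the second case the flanking $s_1^{\pm1}$'s are pushed through $\hs_3 s_2 \hs_3$ using \eqref{R3} to land in one of the three Type 2 models $\hs_3 s_2 \hs_3$, $s_1 \hs_3 s_2 \hs_3$, $\hs_1 \hs_3 s_2 \hs_3$ modulo further Type 1 terms. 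The first summand $\hs_2 s_1 (s_3 - \hs_3)\hs_2 s_3^{\varepsilon}$ is handled by the same procedure.

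On the right-hand side of \eqref{R2}, which consists of eight two-letter words in $s_2, s_3$, multiplication by $\hs_2 s_1 \hs_2 s_3^{\varepsilon}$ yields length-six words that reduce to Types 1 and 2 using exactly the same four tools: $s_2 \hs_2 = 1$, $[s_1, s_3^{\pm 1}] = 0$, Lemma \ref{threestrand} inside $\langle s_2, s_3\rangle$, and \eqref{R3} for $s_1^{\pm1}$-commutation past $\hs_3 s_2 \hs_3$. Assembling both sides and tracking coefficients yields the template identity up to Type 1 and Type 2 corrections, which proves \eqref{eq1} and \eqref{eq2}.

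The main obstacle is combinatorial bookkeeping rather than conceptual: the identity is not one-step, and one must verify that the numerous scalar coefficients arising from \eqref{R1} (applied to $\hs_2^2$) and from the right-hand side of \eqref{R2} combine so that everything outside Types 1 and 2 cancels. The $s \leftrightarrow \hs$ symmetry of \eqref{R2} combined with the inner automorphism of Lemma \ref{innerautomorphism} provides a link between the $\varepsilon = +1$ and $\varepsilon = -1$ cases that cuts the effective work roughly in half, so in practice a single master computation suffices.
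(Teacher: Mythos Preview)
Your template reformulation $(1+s_2)(s_3-\hs_3)\hs_2 s_1 \hs_2 s_3^{\varepsilon}\equiv 0$ is correct and pleasant, but the computation you outline does not close. The trouble is your claim that all eight right-hand terms of \eqref{R2}$\cdot\hs_2 s_1 \hs_2 s_3^{\varepsilon}$ reduce to Types~1 and~2 with the stated tools. Two of them, namely $s_2 s_3\cdot\hs_2 s_1\hs_2 s_3^{\varepsilon}$ and $-s_2\hs_3\cdot\hs_2 s_1\hs_2 s_3^{\varepsilon}$, together form $s_2 X$ with $X=(s_3-\hs_3)\hs_2 s_1\hs_2 s_3^{\varepsilon}$; these are not reducible, they are part of the target. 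More seriously, the term $-\hs_2 s_3\cdot\hs_2 s_1\hs_2 s_3^{\varepsilon}=-\hs_2 s_3\hs_2 s_1\hs_2 s_3^{\varepsilon}$ does \emph{not} reduce by $s_2\hs_2=1$, $[s_1,s_3^{\pm1}]=0$, Lemma~\ref{threestrand} in $\langle s_2,s_3\rangle$, or \eqref{R3}: the subword $\hs_2 s_3\hs_2$ admits no braid move, has only one $s_3$, and sits to the left of the block $s_1\hs_2 s_3^{\varepsilon}$; in fact $\hs_2 s_3\hs_2 s_1\hs_2 s_3^{\varepsilon}=\hs_2\cdot(s_3\hs_2 s_1\hs_2 s_3^{\varepsilon})$ is $\hs_2$ times one of the very words whose behaviour modulo Types~1,~2 is at stake. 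If you nonetheless carry the computation through, expanding the four occurrences of $\hs_2^{\,2}$ via \eqref{R1} with $\hs_2^{\,2}=A\hs_2+B+Cs_2$ (so $A=t_0^{-1}+t_1^{-1}-1$), what survives modulo Types~1,~2 is
\[
(1+A)\,s_2 X + A\,X - \hs_2 s_3\hs_2 s_1\hs_2 s_3^{\varepsilon}\;\equiv\;0,
\]
a single relation among three quantities with unequal coefficients on $X$ and $s_2 X$. This is not the template $(1+s_2)X\equiv 0$, and one relation cannot be enough.

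The paper's argument avoids this by a small but decisive change: instead of right-multiplying the full identity \eqref{R2} by $\hs_2 s_1\hs_2 s_3^{\varepsilon}$, it isolates the single two-letter word $s_3\hs_2$ (respectively $\hs_3\hs_2$) from \eqref{R2} at $(i,j)=(3,2)$ and right-multiplies only by $s_1\hs_2 s_3^{\varepsilon}$, with no extra $\hs_2$ on the left. One then gets eleven terms, each of the shape (word in $\langle s_2,s_3\rangle$)$\cdot s_1\hs_2 s_3^{\varepsilon}$; no $\hs_2^{\,2}$ arises, no $\hs_2 s_3\hs_2\cdots$ term appears, and eight of the eleven reduce directly via braid moves, $[s_1,s_3^{\pm1}]=0$, Lemma~\ref{threestrand} in $\langle s_2,s_3\rangle$, and \eqref{R3}. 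The three survivors, after the braid rewrites $\hs_3 s_2 s_3=s_2 s_3\hs_2$ and $\hs_3\hs_2 s_3=s_2\hs_3\hs_2$, are exactly the right-hand side of \eqref{eq1} (resp.\ \eqref{eq2}) with unit coefficients.
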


\begin{proof}
We use relation \eqref{R2} on the first two letters of certain words to find
Equations~\eqref{eq1} and \eqref{eq2}. The following equalities are written
modulo Type 1 and Type 2 terms.
\begin{align*}
(s_3 \hs_2) s_1 \hs_2 s_3 & =  -(\hs_3 s_2 s_3)s_1 \hs_2 s_3 + (s_3 s_2 \hs_3)
s_1 \hs_2 s_3 + (s_3 s_2) s_1 \hs_2 s_3\\
&\phantom{sii} -(\hs_3 s_2) s_1 \hs_2 s_3 + (\hs_3 \hs_2) s_1 \hs_2 s_3
- (s_2 s_3) s_1 \hs_2 s_3\\
&\phantom{sii} + (s_2 \hs_3) s_1 \hs_2 s_3 + (\hs_2 s_3) s_1 \hs_2 s_3
- (\hs_2 \hs_3) s_1 \hs_2 s_3 \\
&\phantom{sii} + (\hs_3 \hs_2 s_3) s_1 \hs_2 s_3 - (s_3 \hs_2 \hs_3) s_1 \hs_2 s_3 \,.
\end{align*}
Some of the terms can be simplified modulo Type 1 and Type 2 terms:
\begin{align*}
  (s_3 s_2 \hs_3) s_1 \hs_2 s_3 & = \hs_2 s_3 (s_2 s_1 \hs_2) s_3
  = \hs_2 s_3 \hs_1 s_2 s_1 s_3 = \hs_2 \hs_1 (s_3 s_2 s_3) s_1
  = \hs_2 \hs_1 s_2 s_3 s_2 s_1\,,
\\
s_3 (s_2 s_2 \hs_2) s_3 & = s_3 \hs_1 s_2 s_1 s_3 = \hs_1 (s_3 s_2 s_3) s_1
= \hs_1 s_2 s_3 s_2 s_1 \,,
\\
\hs_3 (s_2 s_1 \hs_2) s_3 & = \hs_3 \hs_1 s_2 s_1 s_3 = \hs_1 \hs_3 s_2 s_3 s_1
= \hs_1 s_2 s_3 \hs_2 s_1 \,,
\\
s_2 s_3 s_1 \hs_2 s_3 & = s_2 (s_1 s_3 \hs_2 s_3) \overset{\eqref{R3}}{=}
(s_2 s_3 \hs_2 s_3) s_1 \overset{\text{reduction in }\la s_2,s_3\ra}{=}
\hs_3 s_2 \hs_3 s_1 \overset{\eqref{R3}}{=} s_1 \hs_3 s_2 \hs_3\,,
\\
s_2 \hs_3 s_1 \hs_2 s_3 & = s_2 s_1 (\hs_3 \hs_2 s_3) = s_2 s_1 s_2 \hs_3 \hs_2 \,,
\\
\hs_2 s_3 s_1 \hs_2 s_3 & = \hs_2 (s_1 s_3 \hs_2 s_3) \overset{\eqref{R3}}{=}
(\hs_2 s_3 \hs_2 s_3)s_1 \overset{\text{reduction in }\la s_2,s_3\ra}{=}
\hs_3 s_2 \hs_3 s_1 \overset{\eqref{R3}}{=} s_1 \hs_3 s_2 \hs_3 \,,
\\
\hs_2 \hs_3 s_1 \hs_2 s_3 & = \hs_2 s_1 (\hs_3 \hs_2 s_3) = \hs_2 s_1 s_2
\hs_3 \hs_2 \,,
\\
(s_3 \hs_2 \hs_3) s_1 \hs_2 s_3 & = \hs_2 \hs_3(s_2 s_1 \hs_2)s_3 = \hs_2
\hs_3 \hs_1 s_2 s_1 s_3 = \hs_2 \hs_1 (\hs_3 s_2 s_3) s_1
= \hs_2 \hs_1 s_2 s_3 \hs_2 s_1 \,. 
\end{align*}
So
\begin{align*}
  s_3 \hs_2 s_1 \hs_2 s_3&   =   -(\hs_3 s_2 s_3)s_1 \hs_2 s_3
  + \hs_3 \hs_2 s_1 \hs_2 s_3  + (\hs_3 \hs_2 s_3) s_1 \hs_2 s_3\\
  & = - s_2 s_3 \hs_2 s_1 \hs_2 s_3 + \hs_3 \hs_2 s_1 \hs_2 s_3
  + s_2 \hs_3 \hs_2 s_1 \hs_2 s_3 \,.
\end{align*}
This proves Equation~\eqref{eq1}.
\begin{align*}
  (\hs_3 \hs_2) s_1 \hs_2 \hs_3& =  - (\hs_2 s_3 s_2)s_1 \hs_2 \hs_3
  + (s_2 s_3 \hs_2) s_1 \hs_2 \hs_3 + (s_2 s_3) s_1 \hs_2 \hs_3\\
  &\phantom{sii} -(s_2 \hs_3) s_1 \hs_2 \hs_3 - (\hs_2 s_3) s_1 \hs_2 \hs_3
  + (\hs_2 \hs_3) s_1 \hs_2 \hs_3\\
  &\phantom{sii} - (s_3 s_2) s_1 \hs_2 \hs_3 + (s_3 \hs_2) s_1 \hs_2 \hs_3
  + (\hs_3 s_2) s_1 \hs_2 \hs_3 \\
  &\phantom{sii} + (\hs_2 \hs_3 s_2) s_1 \hs_2 \hs_3 - (s_2 \hs_3 \hs_2)
  s_1 \hs_2 \hs_3 \,.
\end{align*}
Like in the previous case, most terms can be reduced to linear combinations
of Type 1 and Type 2 quantities: $\hs_2 s_3 s_2 s_1 \hs_2 \hs_3$, $s_2 s_3 s_1
\hs_2 \hs_3$, $s_2 \hs_3 s_1 \hs_2 \hs_3$, $\hs_2 s_3 s_1 \hs_2 \hs_3$, $\hs_2
\hs_3 s_1 \hs_2 \hs_3$, $s_3 s_2 s_1 \hs_2 \hs_3$, $\hs_3 s_2 s_1 \hs_2 \hs_3$ and
$\hs_2 \hs_3 s_2 s_1 \hs_2 \hs_3$. And we can write:
\begin{align*}
  \hs_3 \hs_2 s_1 \hs_2 \hs_3&   =   s_2 s_3 \hs_2 s_1 \hs_2 \hs_3
  + s_3 \hs_2 s_1 \hs_2 \hs_3 - s_2 \hs_3 \hs_2 s_1 \hs_2 \hs_3 \,.
\end{align*}
So Equation~\eqref{eq2} holds.  
\end{proof}

\begin{proposition}
\label{reductiontwofirst}
The words $s_{3} \hs_2 s_1 \hs_2 \hs_{3}$ and $\hs_{3} \hs_2 s_1 \hs_2 s_{3}$ reduce.
\end{proposition}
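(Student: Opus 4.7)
The plan is to apply the same $(R_2)$-expansion technique that produced equations~\eqref{eq1} and~\eqref{eq2}, but now to each of the two remaining mixed-sign 5-letter words. For $\hs_3\hs_2 s_1\hs_2 s_3$ I would use $(R_2)$ with $i=3,\ j=2$ to isolate the initial pair $\hs_3\hs_2$ as a linear combination of 11 other 2- and 3-letter words in $\la s_2,s_3\ra$, and then multiply the resulting equation on the right by $s_1\hs_2 s_3$; the same procedure applied to $s_3\hs_2$ in $s_3\hs_2 s_1\hs_2\hs_3$, multiplied on the right by $s_1\hs_2\hs_3$, handles the second word.

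Each of the terms in the resulting expansions is then attacked exactly in the style of the long table preceding equation~\eqref{eq1}. Subwords lying entirely in $\la s_2,s_3\ra\cong B_3$ are reduced by Lemma~\ref{threestrand} to a combination of words with at most one $s_3^{\pm 1}$ together with the 3-letter word $\hs_3 s_2\hs_3$. The central $s_1$ is pushed across $s_3^{\pm 1}$-letters by far commutativity, and across $\hs_3 s_2\hs_3$-blocks using the 3-letter consequences of $(R_3)$ gathered in Lemma~\ref{relations}, incurring only Type~1 corrections. Finally, the already-established equations~\eqref{eq1} and~\eqref{eq2} are invoked to eliminate any residual appearance of the two same-sign 5-letter words $s_3\hs_2 s_1\hs_2 s_3$ and $\hs_3\hs_2 s_1\hs_2\hs_3$.

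After these reductions, the expansion should collapse into a pair of linear identities of the form
\[
\hs_3\hs_2 s_1\hs_2 s_3 \;\equiv\; \alpha\cdot \hs_3\hs_2 s_1\hs_2 s_3 \;+\; \beta\cdot s_3\hs_2 s_1\hs_2\hs_3 \;+\; (\text{Type 1}+\text{Type 2}),
\]
\[
s_3\hs_2 s_1\hs_2\hs_3 \;\equiv\; \gamma\cdot \hs_3\hs_2 s_1\hs_2 s_3 \;+\; \delta\cdot s_3\hs_2 s_1\hs_2\hs_3 \;+\; (\text{Type 1}+\text{Type 2})
\]
in $C_4$, for explicit rational functions $\alpha,\beta,\gamma,\delta\in\BQ(t_0,t_1)$. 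Solving the resulting $2\times 2$ system, whose determinant one expects to be a unit in the localized ring of Remark~\ref{rem.denom}, then expresses both mixed-sign words as Type 1$+$Type 2 combinations, completing the proof.

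The main obstacle I anticipate is the sheer bookkeeping in the second step: each of the 11 initial expansion terms sprouts further sub-terms under the $\la s_2,s_3\ra$-reduction and the $(R_3)$-mediated commutations, and one must carefully track which pieces land in Type 1$+$Type 2 and which feed back either into the two unknowns or into the same-sign words (the latter to be eliminated via~\eqref{eq1} and~\eqref{eq2}). A secondary worry is verifying that the final $2\times 2$ determinant is nonzero; should it degenerate, one would need a third independent relation, most naturally obtained by applying $(R_2)$ to the final pair $\hs_2 s_3^{\pm 1}$ instead of to the initial pair, and repeating the same reduction procedure.
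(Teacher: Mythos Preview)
Your plan has a circularity problem. Applying $(R_2)$ to isolate the initial pair $\hs_3\hs_2$ and then right-multiplying by $s_1\hs_2 s_3$ is not a new computation: it is exactly the computation that produced~\eqref{eq1}, with a different two-letter term of the same $(R_2)$-identity moved to the left-hand side. If you carry it out, the terms that fail to reduce to Type~1/2 are precisely $s_3\hs_2 s_1\hs_2 s_3$, $s_2 s_3\hs_2 s_1\hs_2 s_3$, and $s_2\hs_3\hs_2 s_1\hs_2 s_3$, and the resulting relation is~\eqref{eq1} itself, rearranged. The same happens for the other mixed-sign word and~\eqref{eq2}. So your $2\times 2$ system in the two mixed-sign unknowns never materializes: all you recover are~\eqref{eq1} and~\eqref{eq2}, which express the same-sign words in terms of the mixed-sign words \emph{plus} the six-letter words $s_2 s_3\hs_2 s_1\hs_2 s_3^{\pm1}$ and $s_2\hs_3\hs_2 s_1\hs_2 s_3^{\pm1}$ that you have not treated at all. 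Invoking~\eqref{eq1}, \eqref{eq2} to ``eliminate'' the same-sign words only reintroduces these six-letter words.

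The paper's argument is genuinely different and goes one level deeper. It attacks the six-letter word $s_2 s_3\hs_2 s_1\hs_2 s_3$ directly, applying $(R_2)$ to its \emph{final} pair $\hs_2 s_3$. This produces seven-letter words such as $s_2 s_3\hs_2 s_1\hs_2 s_3 s_2$ and $s_2 s_3\hs_2 s_1\hs_2\hs_3 s_2$, which are then driven back down via braid relations, $(R_1)$, Lemma~\ref{equivR2}, and $(R_3)$-commutations. After a nontrivial chain of reductions, everything collapses to the single scalar identity $(t_0+t_1)\cdot s_2 s_3\hs_2 s_1\hs_2\hs_3 \equiv 0$ modulo Type~1/2. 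Since $t_0+t_1$ is a unit, $s_2 s_3\hs_2 s_1\hs_2\hs_3$ reduces; left-multiplying by $\hs_2$ gives the reduction of $s_3\hs_2 s_1\hs_2\hs_3$, and the right-to-left mirror of the whole argument handles $\hs_3\hs_2 s_1\hs_2 s_3$. The essential point you are missing is that one must pass through length six and seven before the system closes; $(R_2)$-manipulations among the five-letter words alone do not suffice.
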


\begin{proof}
Let us start by writing an identity that will be useful subsequently. All
equalities here are to be understood modulo Type 1 and Type 2 terms. 
\be
\label{eq3}
\begin{aligned}
  s_2 s_3 \hs_2 s_1 (\hs_2 s_3)& \overset{\eqref{R2}}{=}
  - s_2 s_3 \hs_2 s_1 (\hs_2 s_3 s_2) + s_2 s_3 \hs_2 s_1 (s_2 s_3 \hs_2)
  + s_2 s_3 \hs_2 s_1 (s_2 s_3)\\
  &\phantom{spai} - s_2 s_3 \hs_2 s_1 (s_2 \hs_3) + s_2 s_3 \hs_2 s_1 (\hs_2 \hs_3)
  - s_2 s_3 \hs_2 s_1 (s_3 s_2) \\
  &\phantom{spai} + s_2 s_3 \hs_2 s_1 (s_3 \hs_2) + s_2 s_3 \hs_2 s_1 (\hs_3 s_2)
  - s_2 s_3 \hs_2 s_1 (\hs_3 \hs_2) \\
  &\phantom{spai} + s_2 s_3 \hs_2 s_1 (\hs_2 \hs_3 s_2)
  - s_2 s_3 \hs_2 s_1 (s_2 \hs_3 \hs_2) \,.
\end{aligned}
\ee
Some of the terms in the previous equality are linear combinations of Type 1 and
Type 2 terms:
\begin{align*}
  &s_2 s_3 (\hs_2 s_1 s_2) s_3 \hs_2    = s_2 s_3 s_1 s_2 \hs_1 s_3 \hs_2
  = s_2 s_1 (s_3 s_2 s_3) \hs_1 \hs_2 = s_2 s_1 s_2 s_3 s_2 \hs_1 \hs_2 \,,
\\
&s_2 s_3 (\hs_2 s_1 s_2) s_3    = s_2 s_3 s_1 s_2 \hs_1 s_3 = s_2 s_1 (s_3 s_2 s_3)
\hs_1 = s_2 s_1 s_2 s_3 s_2 \hs_1 \,,
\\
&s_2 s_3 (\hs_2 s_1 s_2) \hs_3    = s_2 s_3 s_1 s_2 \hs_1 \hs_3
= s_2 s_1 (s_3 s_2 \hs_3) \hs_1 = s_2 s_1 \hs_2 s_3 s_2 \hs_1 \,,
\\
&s_2 s_3 \hs_2 s_1 s_3 s_2    = s_2 (s_3 \hs_2 s_3 s_1) s_2
\overset{\eqref{R3}}{=} s_2 s_1 (s_3 \hs_2 s_3 s_2)
\overset{\text{reduction in }\la s_2,s_3\ra}{=} s_2 (s_1 \hs_3 s_2 \hs_3)  
\\
& \phantom{spacespaiii}    \overset{\eqref{R3}}{=} (s_2 \hs_3 s_2 \hs_3) s_1
\overset{\text{reduction in }\la s_2,s_3\ra}{=} \hs_3 s_2 \hs_3 s_1
\overset{\eqref{R3}}{=} s_1 \hs_3 s_2 \hs_3 \,,
\\
&s_2 s_3 \hs_2 s_1 s_3 \hs_2    = s_2 (s_3 \hs_2 s_3 s_1) \hs_2
\overset{\eqref{R3}}{=} s_2 s_1 (s_3 \hs_2 s_3 \hs_2)
\overset{\text{reduction in }\la s_2,s_3\ra}{=} s_2 (s_1 \hs_3 s_2 \hs_3)  
\\
&  \phantom{spacespaiii}  \overset{\eqref{R3}}{=} (s_2 \hs_3 s_2 \hs_3) s_1
\overset{\text{reduction in }\la s_2,s_3\ra}{=} \hs_3 s_2 \hs_3 s_1
\overset{\eqref{R3}}{=} s_1 \hs_3 s_2 \hs_3 \,,
\\
&s_2 s_3 \hs_2 s_1 \hs_3 s_2    = s_2 (s_3 \hs_2 \hs_3) s_1 s_2 =
s_2 \hs_2 \hs_3 s_2 s_1 s_2 \,,
\\
&s_2 s_3 \hs_2 s_1 \hs_3 \hs_2    = s_2 (s_3 \hs_2 \hs_3) s_1 \hs_2
= s_2 \hs_2 \hs_3 s_2 s_1 \hs_2 \,,
\\
&s_2 s_3 (\hs_2 s_1 s_2) \hs_3 \hs_2   = s_2 s_3 s_1 s_2 \hs_1 \hs_3 \hs_2
= s_2 s_1 (s_3 s_2 \hs_3) \hs_1 \hs_2 = s_2 s_1 \hs_2 s_3 s_2 \hs_1 \hs_2 \,.
\end{align*}
So Equation~\eqref{eq3} can be written in a simpler way.
\begin{align*}
  s_2 s_3 \hs_2 s_1 \hs_2 s_3&   =  - s_2 s_3 \hs_2 s_1 \hs_2 s_3 s_2
  + s_2 s_3 \hs_2 s_1 \hs_2 \hs_3 + s_2 s_3 \hs_2 s_1 \hs_2 \hs_3 s_2 \,.
\end{align*}
Let us reduce each of the three terms on the right hand side of the previous equality.
\begin{align*}
  (s_2 s_3 \hs_2) s_1 \hs_2 s_3 s_2   & = \hs_3 s_2 s_3 s_1 \hs_2 s_3 s_2
  = \hs_3 s_2 s_1 s_3 (\hs_2 s_3 s_2) = \hs_3 s_2 s_1 (s_3 s_3) s_2 \hs_3 \\
  & =  (t_0 + t_1 - 1) \, \hs_3 s_2 s_1 (s_3 s_2 \hs_3) + (t_0 + t_1 - t_0 t_1 )\,
  \hs_3 (s_2 s_1 s_2) \hs_3 - t_0 t_1 \, \hs_3 s_2 s_1 \hs_3 s_2 \hs_3 \\
  & =  (t_0 + t_1 - 1) \, \hs_3 (s_2 s_1 \hs_2) s_3 s_2 + (t_0 + t_1 - t_0 t_1 )\,
  \hs_3 s_1 s_2 s_1 \hs_3 - t_0 t_1 \, \hs_3 s_2 s_1 \hs_3 s_2 \hs_3 \\
  & =  (t_0 + t_1 - 1) \, \hs_3 \hs_1 s_2 s_1 s_3 s_2 + (t_0 + t_1 - t_0 t_1 )\,
  s_1 \hs_3 s_2 \hs_3 s_1 - t_0 t_1 \, \hs_3 s_2 s_1 \hs_3 s_2 \hs_3 \\
  & =  (t_0 + t_1 - 1) \, \hs_1 \hs_3 s_2 s_3 s_1 s_2 + (t_0 + t_1 - t_0 t_1 )\,
  s_1 s_1 \hs_3 s_2 \hs_3 - t_0 t_1 \, \hs_3 s_2 s_1 \hs_3 s_2 \hs_3 \\
  & =  (t_0 + t_1 - 1) \, \hs_1 s_2 s_3 \hs_2 s_1 s_2 + (t_0 + t_1 - t_0 t_1 )
  (s_1 s_1) \hs_3 s_2 \hs_3 - t_0 t_1 \, \hs_3 s_2 s_1 \hs_3 s_2 \hs_3 \\
   & = - t_0 t_1 \, \hs_3 s_2 s_1 \hs_3 s_2 \hs_3 \,.
\end{align*}
Now continuing down the rabbit hole, we can use Lemma~\ref{equivR2}'s version of
\eqref{R2} to write that:
\begin{align*}
  s_2 s_3 \hs_2 s_1 \hs_2 s_3 s_2   & =   - t_0 t_1 \, \hs_3 s_2 s_1 (\hs_3
  s_2 \hs_3) \overset{\eqref{R2}}{=} - (t_0 t_1)(t_0^{-1} t_1^{-1}) \, \hs_3
  s_2 s_1 (s_3 \hs_2 s_3) + \lambda \\
  & =  - (\hs_3 s_2 s_3) s_1 \hs_2 s_3 + \lambda = - s_2 s_3 \hs_2 s_1 \hs_2 s_3
  + \lambda \,,
\end{align*}
where $\lambda$ is a linear combination of the following words, all of which reduce:
\begin{align*}
  \hs_3 s_2 s_1 (\hs_2 \hs_3 \hs_2)  & =  \hs_3 (s_2 s_1 \hs_2) \hs_3 \hs_2
  = \hs_3 (\hs_1 s_2 s_1) \hs_3 \hs_2 
\\
&= \hs_1 (\hs_3 s_2 \hs_3 s_1) \hs_2 
\overset{\eqref{R3}}{=} (\hs_1 s_1) (\hs_3 s_2 \hs_3 \hs_2)
\overset{\text{reduction in }\la s_2,s_3\ra}{=} \hs_3 s_2 \hs_3 \,,
\\
\hs_3 s_2 s_1 (s_2 s_3 s_2)  & = \hs_3 (s_2 s_1 s_2) s_3 s_2
= \hs_3 s_1 s_2 s_1 s_3 s_2 = s_1 (\hs_3 s_2 s_3) s_1 s_2 = s_1 s_2 s_3
\hs_2 s_1 s_2 \,,
\\
\hs_3 s_2 s_1 (s_2 \hs_3 \hs_2)  & = \hs_3 (s_2 s_1 s_2) \hs_3 \hs_2
= \hs_3 s_1 s_2 s_1 \hs_3 \hs_2  
\\
&= s_1 (\hs_3 s_2 \hs_3 s_1) \hs_2  \overset{\eqref{R3}}{=} s_1 s_1
(\hs_3 s_2 \hs_3 \hs_2 ) \overset{\text{reduction in }\la s_2,s_3\ra}{=} s_1 s_1
(\hs_3 s_2 \hs_3) \,,
\\
\hs_3 s_2 s_1 (s_2 s_3 \hs_2)  & = \hs_3 (s_2 s_1 s_2) s_3 \hs_2
= \hs_3 (s_1 s_2 s_1) s_3 \hs_2 = s_1 (\hs_3 s_2 s_3) s_1 \hs_2
= s_1 s_2 s_3 \hs_2 s_1 \hs_2  \,,
\\
\hs_3 s_2 s_1 (\hs_3 \hs_2)  & = (\hs_3 s_2 \hs_3 s_1) \hs_2
\overset{\eqref{R3}}{=} s_1 (\hs_3 s_2 \hs_3 \hs_2)
\overset{\text{reduction in }\la s_2,s_3\ra}{=} s_1 \hs_3 s_2 \hs_3 \,,
\\
\hs_3 s_2 s_1 (\hs_3 s_2)  & = (\hs_3 s_2 \hs_3 s_1) s_2
\overset{\eqref{R3}}{=} s_1 (\hs_3 s_2 \hs_3 s_2)
\overset{\text{reduction in }\la s_2,s_3\ra}{=} s_1 \hs_3 s_2 \hs_3 \,,
\\
\hs_3 s_2 s_1 (s_3 \hs_2)  & = \hs_3 s_2 s_3 s_1 \hs_2
= (\hs_3 s_2 s_3) s_1 \hs_2 = s_2 s_3 \hs_2 s_1 \hs_2 \,,
\\
\hs_3 s_2 s_1 (s_3 s_2)  & = \hs_3 s_2 s_3 s_1 s_2
= (\hs_3 s_2 s_3) s_1 s_2 = s_2 s_3 \hs_2 s_1 s_2 \,,
\\
\hs_3 s_2 s_1 (\hs_2 \hs_3)  & = \hs_3 (s_2 s_1 \hs_2) \hs_3
= \hs_3 \hs_1 s_2 s_1 \hs_3 = \hs_1 (\hs_3 s_2 \hs_3 s_1)
\overset{\eqref{R3}}{=} \hs_1 s_1 \hs_3 s_2 \hs_3 = \hs_3 s_2 \hs_3 \,,
\\
\hs_3 s_2 s_1 (s_2 \hs_3)  & = \hs_3 (s_2 s_1 s_2) \hs_3 = \hs_3 s_1 s_2 s_1 \hs_3
= s_1 (\hs_3 s_2 \hs_3 s_1) \overset{\eqref{R3}}{=} (s_1 s_1) \hs_3 s_2 \hs_3 \,,
\\
\hs_3 s_2 s_1 (\hs_2 s_3)  & = \hs_3 (s_2 s_1 \hs_2) s_3 = \hs_3 \hs_1 s_2 s_1 s_3
= \hs_1 (\hs_3 s_2 s_3) s_1 = \hs_1 s_2 s_3 \hs_2 s_1 \,,
\\
\hs_3 s_2 s_1 (s_2 s_3)  & = \hs_3 (s_2 s_1 s_2) s_3 = \hs_3 s_1 s_2 s_1 s_3
= s_1 (\hs_3 s_2 s_3) s_1 = s_1 s_2 s_3 \hs_2 s_1 \,,
\\
\hs_3 s_2 s_1 (\hs_2)  & = \hs_3 s_2 s_1 \hs_2 \,,
\\
\hs_3 s_2 s_1 (s_2)  & = \hs_3 s_2 s_1 s_2 \,.
\end{align*}
This means that modulo Type 1 and Type 2 terms:
$$
s_2 s_3 \hs_2 s_1 \hs_2 s_3 s_2    =  - s_2 s_3 \hs_2 s_1 \hs_2 s_3  \,,
$$
and Equation~\eqref{eq3} can be further simplified:
$$
\cancel{s_2 s_3 \hs_2 s_1 \hs_2 s_3}   =  \cancel{- s_2 s_3
  \hs_2 s_1 \hs_2 s_3 s_2} + s_2 s_3 \hs_2 s_1 \hs_2 \hs_3
+ s_2 s_3 \hs_2 s_1 \hs_2 \hs_3 s_2 \,.
$$
Also:
\begin{align*}
  s_2 s_3 \hs_2 s_1 (\hs_2 \hs_3 s_2)&\phantom{i}   = s_2 s_3 \hs_2 s_1 s_3
  \hs_2 \hs_3 = (s_2 s_3 \hs_2) s_3 s_1 \hs_2 \hs_3 = \hs_3 s_2 (s_3 s_3) s_1
  \hs_2 \hs_3\\
  & \overset{\eqref{R1}}{=} (t_0 + t_1 - 1) \, (\hs_3 s_2 s_3) s_1 \hs_2 \hs_3
  + (t_0 + t_1 - t_0 t_1 )\, \hs_3 (s_2 s_1 \hs_2) \hs_3 - t_0 t_1 \,
  \hs_3 s_2 \hs_3 s_1 \hs_2 \hs_3\\
  &\phantom{i}  = (t_0 + t_1 - 1) \, s_2 s_3 \hs_2 s_1 \hs_2 \hs_3 + (t_0 + t_1
  - t_0 t_1 )\, \hs_3 \hs_1 s_2 s_1 \hs_3 - t_0 t_1 \, \hs_3 s_2 s_1 (\hs_3
  \hs_2 \hs_3) \\
  & \phantom{i}  = (t_0 + t_1 - 1) \, s_2 s_3 \hs_2 s_1 \hs_2 \hs_3
  + (t_0 + t_1 - t_0 t_1 )\, \hs_1 (\hs_3 s_2 \hs_3 s_1) - t_0 t_1 \,
  \hs_3 (s_2 s_1 \hs_2) \hs_3 \hs_2 \\
  & \overset{\eqref{R3}}{=} (t_0 + t_1 - 1) \, s_2 s_3 \hs_2 s_1 \hs_2 \hs_3
  + (t_0 + t_1 - t_0 t_1 )\, (\hs_1 s_1) \hs_3 s_2 \hs_3 - t_0 t_1 \, \hs_3
  \hs_1 s_2 s_1 \hs_3 \hs_2 \\
  & \phantom{i}  = (t_0 + t_1 - 1) \, s_2 s_3 \hs_2 s_1 \hs_2 \hs_3
  + (t_0 + t_1 - t_0 t_1 )\, \hs_3 s_2 \hs_3 - t_0 t_1 \, \hs_1
  (\hs_3 s_2 \hs_3 s_1) \hs_2 \\
  & \overset{\eqref{R3}}{=} (t_0 + t_1 - 1) \, s_2 s_3 \hs_2 s_1 \hs_2
  \hs_3 - t_0 t_1 \, \hs_1 s_1 (\hs_3 s_2 \hs_3 \hs_2) \\
  & \phantom{i}  = (t_0 + t_1 - 1) \, s_2 s_3 \hs_2 s_1 \hs_2 \hs_3
  - t_0 t_1 \, \hs_3 s_2 \hs_3 \hs_2 \\
  & \phantom{i}  = (t_0 + t_1 - 1) \, s_2 s_3 \hs_2 s_1 \hs_2 \hs_3
  \text{ (reduction in $\la s_2,s_3\ra$)}\,.
\end{align*}
So Equation~\eqref{eq3} can now be written:
$$
0 = s_2 s_3 \hs_2 s_1 \hs_2 \hs_3 + (t_0 + t_1 - 1) \, s_2 s_3 \hs_2 s_1 \hs_2
\hs_3 \text{ , i.e. } 0 = (t_0 + t_1) \, s_2 s_3 \hs_2 s_1 \hs_2 \hs_3 \,.
$$
So $s_2 s_3 \hs_2 s_1 \hs_2 \hs_3$ reduces. And multiplying by $\hs_2$
from the left, we deduce that $s_3 \hs_2 s_1 \hs_2 \hs_3$ reduces as well. Indeed:
\begin{align*}
  s_2 s_3 \hs_2 s_1 \hs_2 \hs_3 & = a(s_1,s_2) + b(s_1,s_2) \, s_3 \,
  c(s_1,s_2) + d(s_1,s_2) \, \hs_3 \, e(s_1,s_2) \\
  &\phantom{sii} + \lambda \, \hs_3 s_2 \hs_3 + \mu \, s_1 \hs_3 s_2 \hs_3
  + \nu \, \hs_1 \hs_3 s_2 \hs_3 \,,
\end{align*}
so
\begin{align*}
  s_3 \hs_2 s_1 \hs_2 \hs_3 & = \hs_2 \, a(s_1,s_2) + \hs_2 \, b(s_1,s_2) \, s_3
  \, c(s_1,s_2) + \hs_2 \, d(s_1,s_2) \, \hs_3 \, e(s_1,s_2) \\
  &\phantom{sii} + \lambda \, \hs_2 \hs_3 s_2 \hs_3 + \mu \, \hs_2 s_1
  \hs_3 s_2 \hs_3 + \nu \, \hs_2 \hs_1 \hs_3 s_2 \hs_3 \\
  & =  \tilde{a}(s_1,s_2) +  \tilde{b}(s_1,s_2) \, s_3 \, c(s_1,s_2)
  + \tilde{d}(s_1,s_2) \, \hs_3 \, e(s_1,s_2) \\
  & \phantom{sii} + \lambda \, \hs_3 s_2 \hs_3 + \mu \, s_1 (\hs_2 \hs_3 s_2 \hs_3)
  + \nu \, \hs_2 (\hs_1 \hs_3 s_2 \hs_3) \text{ (reduction in $\la s_2,s_3\ra$)}\\
  &=  \tilde{a}(s_1,s_2) +  \tilde{b}(s_1,s_2) \, s_3 \, c(s_1,s_2)
  + \tilde{d}(s_1,s_2) \, \hs_3 \, e(s_1,s_2) \\
  & \phantom{sii} + \lambda \, \hs_3 s_2 \hs_3 + \mu \, s_1 \hs_3 s_2 \hs_3
  + \nu \, (\hs_2 \hs_3 s_2 \hs_3) \hs_1 \text{ (reduction in
$\la s_2,s_3\ra$ + \eqref{R3})} \\
  &=  \tilde{a}(s_1,s_2) +  \tilde{b}(s_1,s_2) \, s_3 \, c(s_1,s_2)
  + \tilde{d}(s_1,s_2) \, \hs_3 \, e(s_1,s_2) \\
  & \phantom{sii} + \lambda \, \hs_3 s_2 \hs_3 + \mu \, s_1 \hs_3 s_2 \hs_3
  + \nu \, \hs_3 s_2 \hs_3 \hs_1 \text{ (reduction in $\la s_2,s_3\ra$)} \\
  &=  \tilde{a}(s_1,s_2) +  \tilde{b}(s_1,s_2) \, s_3 \, c(s_1,s_2)
  + \tilde{d}(s_1,s_2) \, \hs_3 \, e(s_1,s_2) \\
  & \phantom{sii} + \lambda \, \hs_3 s_2 \hs_3 + \mu \, s_1 \hs_3 s_2 \hs_3
  + \nu \, \hs_1 \hs_3 s_2 \hs_3 \text{ (\eqref{R3})} \\
&= 0 \,.
\end{align*}

Since $s_3 \hs_2 s_1 \hs_2 \hs_3$ reduces, if we start the previous computation
again writing everything from right to left when it was written from left to
right, we find in the same way that $\hs_3 \hs_2 s_1 \hs_2 s_3$ reduces.
\end{proof}

\begin{proposition}
The words $s_{3} \hs_2 s_1 \hs_2 s_{3}$ and $\hs_{3} \hs_2 s_1 \hs_2 \hs_{3}$ reduce.
\end{proposition}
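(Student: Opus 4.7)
The plan is to deduce reducibility of both $s_3 \hs_2 s_1 \hs_2 s_3$ and $\hs_3 \hs_2 s_1 \hs_2 \hs_3$ directly from Equations~\eqref{eq1} and~\eqref{eq2} combined with Proposition~\ref{reductiontwofirst}. I will focus on $s_3 \hs_2 s_1 \hs_2 s_3$; the other word is then handled identically using \eqref{eq2} together with the already-established reduction of $s_3 \hs_2 s_1 \hs_2 \hs_3$.

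First I would move the term $-s_2 s_3 \hs_2 s_1 \hs_2 s_3 = -s_2 \cdot (s_3 \hs_2 s_1 \hs_2 s_3)$ to the left-hand side of \eqref{eq1} and factor, obtaining
\begin{equation*}
(1 + s_2)\,(s_3 \hs_2 s_1 \hs_2 s_3) \;\equiv\; (1 + s_2)\,(\hs_3 \hs_2 s_1 \hs_2 s_3) \pmod{\text{Type 1, Type 2}}.
\end{equation*}
By Proposition~\ref{reductiontwofirst}, $\hs_3 \hs_2 s_1 \hs_2 s_3$ is a $\BQ(t_0,t_1)$-linear combination of Type 1 and Type 2 words, so the right-hand side is $(1 + s_2)$ acting on such a combination.

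Next I would verify that left-multiplication by $s_2$, and hence by any polynomial in $s_2$, preserves the linear span (modulo Type 1 and Type 2 terms) of Type 1 $\cup$ Type 2 words. For Type 1 this is immediate, since multiplication by $s_2$ cannot increase the number of $s_3^{\pm 1}$-letters. For the Type 2 word $\hs_3 s_2 \hs_3$, the product $s_2 \hs_3 s_2 \hs_3$ lies in $\la s_2, s_3\ra \cong B_3$ and reduces through Lemma~\ref{threestrand}. For $s_1 \hs_3 s_2 \hs_3$, \eqref{R3} gives $s_2 \cdot s_1 \hs_3 s_2 \hs_3 \equiv (s_2 \hs_3 s_2 \hs_3)\, s_1$ modulo Type 1; the preceding case reduces $s_2 \hs_3 s_2 \hs_3$, and one further application of \eqref{R3} absorbs the trailing $s_1$ into $s_1 \hs_3 s_2 \hs_3$ (Type 2) plus Type 1. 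The word $\hs_1 \hs_3 s_2 \hs_3$ is treated identically with $\hs_1$ in place of $s_1$.

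Finally I plan to invoke \eqref{R1}, which shows that $s_2$ satisfies the cubic $(s_2 - 1)(s_2 - t_0)(s_2 - t_1) = 0$. Evaluating the polynomial $(X-1)(X-t_0)(X-t_1)$ at $X = -1$ yields the scalar $-2(1 + t_0)(1 + t_1)$, invertible in the ground ring $\BZ[t_0, t_1, \delta(t_0, t_1)^{-1}]$ of Remark~\ref{rem.denom}. Hence $X+1$ is coprime to the minimal polynomial of $s_2$, so $(1 + s_2)$ has an inverse in $C_4$ that is itself a polynomial in $s_2$. Multiplying the displayed congruence on the left by $(1 + s_2)^{-1}$ and applying the stability claim then reduces $s_3 \hs_2 s_1 \hs_2 s_3$; the symmetric strategy applied to \eqref{eq2} reduces $\hs_3 \hs_2 s_1 \hs_2 \hs_3$. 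The main obstacle is the stability claim above: it is not deep, but requires a careful short pass using \eqref{R3} and Lemma~\ref{threestrand}, in the same style as the computations in the proof of Proposition~\ref{reductiontwofirst}.
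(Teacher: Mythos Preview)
Your argument has a fatal arithmetic error: the cubic satisfied by $s_2$ is \emph{not} $(s_2-1)(s_2-t_0)(s_2-t_1)=0$ but rather $(s_2+1)(s_2-t_0)(s_2-t_1)=0$. Indeed, multiplying \eqref{R1} by $s_i$ gives
\[
s_i^3 + (1-t_0-t_1)\,s_i^2 + (t_0 t_1 - t_0 - t_1)\,s_i + t_0 t_1 = 0,
\]
and substituting $X=-1$ into $X^3+(1-t_0-t_1)X^2+(t_0 t_1-t_0-t_1)X+t_0 t_1$ yields $0$, not $-2(1+t_0)(1+t_1)$. (You can also read this off the $R$-matrices in Appendix~\ref{sec.Rmat}: both $R_{\LG}$ and $R_V$ have $-1$ as an eigenvalue, e.g.\ on $w_{22}$ and $v_{11}$ respectively. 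The sentence in the introduction listing the roots as $1,t_0,t_1$ is a typo.) Consequently $(1+s_2)$ is a zero-divisor in $C_4$ and admits no polynomial inverse, so your final step of left-multiplying by $(1+s_2)^{-1}$ is unavailable.

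This is not a minor slip: it is exactly why the paper cannot conclude from the single equation $(1+s_2)(s_3\hs_2 s_1\hs_2 s_3)\equiv 0$ and instead manufactures a genuinely independent second relation by expanding $\eqref{R3}\cdot \hs_2 s_3$, then checks that the resulting $2\times 2$ system in the unknowns $s_3\hs_2 s_1\hs_2 s_3$ and $s_2 s_3\hs_2 s_1\hs_2 s_3$ has nonzero determinant. Your stability claim (that the Type~1/Type~2 span is closed under left multiplication by $s_2$) is correct and useful, but on its own it only tells you that $(1+s_2)$ acts on the quotient by that span; since $-1$ is an eigenvalue of $s_2$, this action need not be injective, and you still need a second, independent equation to pin down $s_3\hs_2 s_1\hs_2 s_3$.
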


\begin{proof}
Using Proposition~\ref{reductiontwofirst}, Equations~\eqref{eq1} and~\eqref{eq2}
can be written in a simpler way: 
$$
{\eqref{eq1} : }\,s_3 \hs_2 s_1 \hs_2 s_3 + s_2 s_3 \hs_2 s_1 \hs_2 s_3 = 0 \, ,
$$
$$
{\eqref{eq2} : }\,\hs_3 \hs_2 s_1 \hs_2\hs_3 + s_2 \hs_3 \hs_2 s_1 \hs_2 \hs_3 = 0 \,.
$$

To prove e.g. that $s_3 \hs_2 s_1 \hs_2 s_3$ reduces, one way is to find a
different equation involving $s_3 \hs_2 s_1 \hs_2 s_3$ and $s_2 s_3 \hs_2
s_1 \hs_2 s_3$. To do so, we write down $\eqref{R3} \cdot \hs_2 s_3$ explicitly
and we find that modulo Type 1 and Type 2 terms, it can be expressed as follows:
\be
\label{eq4}
a_{78} \, (s_2 s_3 \hs_2 s_1) \hs_2 s_3 + (a_{39} - a_{75} - a_{53}
- a_{47}) \, s_3 \hs_2 s_1 \hs_2 s_3 = 0 \,.
\ee

Therefore the system
$$
(\Sigma): \left\{
\begin{array}{l}
s_3 \hs_2 s_1 \hs_2 s_3 + s_2 s_3 \hs_2 s_1 \hs_2 s_3 = 0 \\
a_{78} \, (s_2 s_3 \hs_2 s_1) \hs_2 s_3 + (a_{39} - a_{75} - a_{53} - a_{47}) \,
s_3 \hs_2 s_1 \hs_2 s_3 = 0
\end{array}
\right.
$$
has the following determinant:
\begin{align*}
   \text{det}(\Sigma)&\phantom{i}   =\left|
\begin{array}{cc}
a_{39} - a_{75} - a_{53} - a_{47}&1\\
a_{78}&1
\end{array}
\right| = - a_{78} + a_{39} - a_{75} - a_{53} - a_{47}\\
& \phantom{i}  = \frac{(t_0 + t_1 -1)(1 + t_0 t_1 + t_0^2 t_1
  + t_0 t_1^2)}{(t_0 + t_1)(t_0 t_1 +1)(t_0 t_1 -1)} \neq 0 \,.
\end{align*}
So $(\Sigma)$ is an invertible system. Thus $s_3 \hs_2 s_1 \hs_2 s_3$ (and
$s_2 s_3 \hs_2 s_1 \hs_2 s_3$) can be reduced.

Similarly, we can prove that $\hs_3 \hs_2 s_1 \hs_2\hs_3$ (and $s_2 \hs_3
\hs_2 s_1 \hs_2 \hs_3$) reduce by considering the system comprised of the
reduced version of Equation~\eqref{eq2} and $\eqref{R3} \cdot \hs_2 \hs_3$.
\end{proof}

Summing up, Lemma~\ref{fourstrand} is now proved.

%%%%%%%%%%%%%%%%%%%%%%%%%%%%%%%%%%%%%%%%%%%%%%%%%%%%%%%%%%%%%%%%%%%%%%%%%%%%
%%%%%%%%%%%%%%%%%%%%%%%%%%%%%%%%%%%%%%%%%%%%%%%%%%%%%%%%%%%%%%%%%%%%%%%%%%%%

\bibliographystyle{hamsalpha}
\bibliography{biblio}
\end{document}